\newcommand{\genlegendre}[4]{%
	\genfrac{(}{)}{}{#1}{#3}{#4}%
	\if\relax\detokenize{#2}\relax\else_{\!#2}\fi
}
\newcommand{\bartosz}[1]{\textit{{\color{red}$\diamondsuit\diamondsuit\diamondsuit$ Bartosz: #1}}}
\newtheorem{theorem}{Theorem}[section]
\newtheorem{lemma}[theorem]{Lemma}
\newtheorem{corollary}[theorem]{Corollary}
\newtheorem*{conjecture}{Conjecture}
\theoremstyle{definition}
\newtheorem{definition}[theorem]{Definition}
\newtheorem{proposition}[theorem]{Proposition}
\newtheorem{example}[theorem]{Example}
\theoremstyle{remark}
\newtheorem{remark}[theorem]{Remark}
\newcommand{\longcomment}[1]{}
\DeclareMathOperator{\charac}{char}
\DeclareMathOperator{\Res}{Res}
\DeclareMathOperator{\Gal}{Gal}
\DeclareMathOperator{\GalQ}{Gal(\overline{\mathbb{Q}}/\mathbb{Q})}
\DeclareMathOperator{\OO}{\mathcal{O}}
\newcommand{\whichbold}[1]{\mathbb{#1}} 
\newcommand{\F}{\whichbold{F}}
\renewcommand{\P}{\whichbold{P}}
\newcommand{\ZZ}{\whichbold{Z}}
\newcommand{\RR}{\whichbold{R}}
\renewcommand{\AA}{\whichbold{A}}
\newcommand{\QQ}{\whichbold{Q}}
\newcommand{\Fq}{\whichbold{F}_{q}}
\newcommand{\Fp}{\whichbold{F}_{p}}
\subjclass[2010]{14H52,   14D10, 11G05, 11M38}
\keywords{elliptic curves; higher moments; bias conjecture; zeta functions; Sato-Tate conjecture}
\numberwithin{equation}{section}
\author{Matija Kazalicki}
\address{Department of Mathematics\\ 
	University of Zagreb\\
	 Bijeni\v{c}ka cesta 30\\
	  10000 Zagreb\\
	  Croatia}
\email{matija.kazalicki@math.hr}
\author{Bartosz Naskręcki}
\address{Faculty of Mathematics and Computer Science, Adam Mickiewicz
	University Poznań, Poland; ORCID
	0000-0003-2484-143X}
\email{bartosz.naskrecki@amu.edu.pl}
\title[Second moments and the Bias conjecture]{Second moments and the Bias conjecture for the family of cubic pencils}
\renewcommand{\tocsection}[3]{%
	\indentlabel{\@ifnotempty{#2}{\bfseries\ignorespaces#1 #2\quad}}\bfseries#3}
\renewcommand{\tocsubsection}[3]{%
	\indentlabel{\@ifnotempty{#2}{\ignorespaces#1 #2\quad}}#3}
\providecommand\@dotsep{4.5}
\def\@tocline#1#2#3#4#5#6#7{\relax
	\ifnum #1>\c@tocdepth 
	\else
	\par \addpenalty\@secpenalty\addvspace{#2}%
	\begingroup \hyphenpenalty\@M
	\@ifempty{#4}{%
		\@tempdima\csname r@tocindent\number#1\endcsname\relax
	}{%
		\@tempdima#4\relax
	}%
	\parindent\z@ \leftskip#3\relax \advance\leftskip\@tempdima\relax
	\rightskip\@pnumwidth plus1em \parfillskip-\@pnumwidth
	#5\leavevmode\hskip-\@tempdima{#6}\nobreak
	\leaders\hbox{$\m@th\mkern \@dotsep mu\hbox{.}\mkern \@dotsep mu$}\hfill
	\nobreak
	\hbox to\@pnumwidth{\@tocpagenum{\ifnum#1=1\bfseries\fi#7}}\par
	\nobreak
	\endgroup
	\fi}
\renewcommand\csname r@tocindent0\endcsname{0pt}
\def\l@subsection{\@tocline{2}{0pt}{2.5pc}{5pc}{}}
\begin{document}

\begin{abstract}
For a $1$-parametric family $\mathcal{E}_{U}$ of elliptic curves over $\QQ$  and a prime $p$, consider the second moment sum $M_{2,p}(\mathcal{E}_U)=\sum_{u\in\mathbb{F}_{p}}a_{u,p}^2$, where 
$a_{u,p}=p+1-\#\mathcal{E}_{u}(\mathbb{F}_{p})$. 
Inspired by Rosen and Silverman's proof of the Nagao conjecture, which relates the first moment of a rational elliptic surface to the rank of the Mordell-Weil group of the corresponding elliptic curve, S. J. Miller initiated the study of the asymptotic expansion of $M_{2,p}(\mathcal{E}_U)=p^2+O(p^{3/2})$ (which by the work of Deligne and Michel has a cohomological interpretation). 
He conjectured that, similar to the first moment case, the largest lower-order term that does not average to 0 has a negative bias.
In this paper, we provide an explicit formula for the second moment $M_{2,p}(\mathcal{E}_{U})$  of
\begin{equation*}
	\mathcal{E}_{U}:y^2=P(x)U+Q(x),
\end{equation*}
where $\deg P(x),\deg Q(x)\leq 3$. For a generic choice of polynomials $P(x)$ and $Q(x)$ this formula is expressed in terms of the point count of a certain genus two curve. As an application, we prove that the Bias conjecture holds for the pencil of the cubics $\mathcal{E}_U$.
\end{abstract}
\maketitle
\tableofcontents

\section{Introduction}

Let $\mathcal{E}_{U}$ be a $1$-parametric family of elliptic curves over $\QQ$, i.e., a generic fiber of an elliptic surface $\mathcal{E}\rightarrow\mathbb{P}^{1}$ defined over $\QQ$ with a section. For every $u\in\mathbb{P}^1(\mathbb{Q})$ let $\tilde{\mathcal{E}}_{u,p}$ be a minimal model at a rational prime $p$ of $\mathcal{E}_{u}$, and denote by $a_{u,p}$ the number $p+1-\#\tilde{\mathcal{E}}_{u,p}(\mathbb{F}_{p})$. For an element $u\in\mathbb{F}_{p}$ such that $E=\tilde{\mathcal{E}}_{u,p}$ is an elliptic curve over $\mathbb{F}_{p}$ the number $a_{u,p}$ is the usual trace of the Frobenius endomorphism that acts on the $\ell$-adic cohomology group $H^{1}_{et}(E_{\overline{\mathbb{F}}_{p}},\mathbb{Q}_{\ell})$ for $\ell\neq p$.

We consider the $r$-th moment sum associated with the $\mathcal{E}_U$ and prime $p$
\[M_{r,p}(\mathcal{E}_U)=\sum_{u\in\mathbb{F}_{p}}a_{u,p}^r.\]

These sums are well understood, by the work of Deligne \cite{Deligne_Modular}, if the family $\mathcal{E}_U$ is the universal elliptic curve \cite{Scholl_Modular_motives}, for example, associated to the modular group $\Gamma_1(N)$ - then they can be described as traces of Hecke operators acting on the space of weight $k \leq r+2$ cusp forms $S_{k}(\Gamma_1(N))$. 

Even moment sums for a two-parametric family of elliptic curves $y^2=x^3+ax+b$, $a,b\in\mathbb{F}_{p}$ were considered in the work of Birch \cite{Birch}. In this paper, Birch gave an estimate for the highest order term and proved by using the Selberg trace formula that these sums are polynomial expressions in $p$, computed from the traces of Hecke operators acting on the full level modular group. 
These ideas were discussed in the context of vertical and horizontal Sato-Tate distributions by Katz in \cite{Katz_lectures}.

Dujella and Kazalicki in  \cite{Dujella_Kazalicki_ANT} related some higher moment sums to Diophantine $m$-tuples. Another case was considered in \cite{Kazalicki_Naskrecki_JNT}.
If the family $\mathcal{E}_U$ is not associated to the modular group as above or multiparametric like in the case of Birch, then although often there is no simple closed formula for the $r$-th moments $M_{r,p}(\mathcal{E}_U)$, one can study their averages.

It is natural to extend the definition of the second moment to include the fiber at infinity $\mathcal{E}_\infty$. We define $$\tilde{M}_{r,p}(\mathcal{E}_U)=M_{r,p}(\mathcal{E}_U)+a_{\infty,p}^r,$$
where $a_{\infty,p}=p+1-\#\mathcal{E}_{\infty}(\mathbb{F}_{p})$. 
We can further generalize and define $a_{u,q}=q+1-\#\mathcal{E}_{u}(\mathbb{F}_{q})$
where $q$ is any prime power. At primes of bad reduction the symbol $a_{u,q}$ lacks a natural cohomological interpretation but is natural to include it in this form into the computations. In analogy, we define $\tilde{M}_{r,q}$ to be the sum of $a_{u,q}^r$ contributions for all $u\in\mathbb{P}^{1}(\Fq)$. 

The sums $M_{r,q}(\mathcal{E}_{U})$ are \textsl{motivic} in the following sense. One can attach to each sequence $\mathcal{M}_{r}(\mathcal{E}_{U})_{p}=(M_{r,p^i})_{i}=(M_{r,p^i}(\mathcal{E}_{U}))_{i}$ for a given prime $p$ and positive integer $i$ its zeta function
\[Z(\mathcal{M}_{r}(\mathcal{E}_{U})_{p},T)=\exp\left(\sum_{i=1}^{\infty}\frac{M_{r,p^i}}{i} T^i\right)\]
which is rational, i.e., $Z(\mathcal{M}_{r}(\mathcal{E}_{U})_{p},T)\in\mathbb{Q}(T)$. In consequence, there exists a set of algebraic integers $\alpha_{i,j}^{(p)}\in\overline{\mathbb{Q}}$ such that for every prime $p$ and exponent~$m$

\begin{equation}\label{eq:motivic_sum}
	M_{r,p^m} = \sum_{i=0}^{2m}\sum_{j}(\alpha_{i,j}^{(p)})^m
\end{equation}
where 
\begin{equation}\label{eq:alpha_weights}
	|\alpha_{i,j}^{(p)}| = p^{i/2}.
\end{equation}

In a general context, one would also assume something about the meromorphy of the $L$-function obtained from the product of all zeta functions as a function on the complex plane, but we do not need these properties in what follows.

The rationality of $Z(\mathcal{M}_{r}(\mathcal{E}_{U})_{p},T)$ follows from a general result \cite{Michel} about the trace of Frobenius endomorphism acting on the cohomology group
\[
W_{r,\ell} = H^1_{et}(\mathbb{P}^{1}_j\otimes \overline{\QQ}, i_*\textrm{Sym}^{r}\mathcal{E}_\ell)
\]
where $i_*\textrm{Sym}^{r}\mathcal{E}_\ell$ is an $\ell$-adic sheaf associated with the family $\mathcal{E}\rightarrow\mathbb{P}^1$. In general, it is difficult to study the Frobenius trace on such a cohomology group, cf. \cite{Scholl_Modular_motives},\cite{Deligne_Weil_II}. Since $W_{r,\ell}$ in general is not pure \cite{Michel} it follows that it does not correspond to a pure motive. Instead, we expect a ``mixed motivic'' decomposition which we will make precise in the follow-up.

\subsection{A pencil of cubics and its second moment}
The main goal of this paper is to describe explicitly the motivic decomposition of the second moment for a general family of cubic pencils. In a parallel paper \cite{Kazalicki_Naskrecki_JNT} we have studied a detailed decomposition of the second moment sum with the help of point counts on a certain threefold fibred in K3 surfaces. 

Encouraged by special examples from \cite{Mackall_bias}, we discovered that one can obtain a uniform second moment description for every cubic pencil with the help of a special conic fibration on the naturally defined threefold. We use this formula in Theorem \ref{thm:main2} to compute explicitly the bias of the second moment of the family.  

In particular, for the pencil of cubics
\begin{equation}\label{eq:0}
	\mathcal{E}_{U}:y^2=P(x)U+Q(x),
\end{equation}
where $\deg P(x),\deg Q(x)\leq 3$ there is a formula \eqref{eq:main} for the second moment $\tilde{M}_{2,q}(\mathcal{E}_U)$ of the family \eqref{eq:0} for typical choice of polynomials $P(x)$ and $Q(x)$ which is given in terms of number of $\F_q$-rational points on a certain genus two curve $\overline{D}$ associated to $P(x)$ and $Q(x)$. To make this formulation more precise, we need to introduce a point count on a naturally attached threefold.

Let $M_{aff}$ be the threefold associated to the Kummer surface $Kum(\mathcal{E}_U\times \mathcal{E}_U)$ over $\QQ(U)$
$$M_{aff}: \left( P(x_1)U+Q(x_1) \right)\cdot \left(P(x_2)U+Q(x_2)\right)=y^2 \subset \mathbb{A}^1\times \mathbb{A}^1 \times \mathbb{A}^1\times \mathbb{A}^1$$
and let
$$M_{\infty}:  P(x_1) P(x_2)=y^2\subset \mathbb{A}^1\times \mathbb{A}^1\times \mathbb{A}^1$$
be a fiber of the threefold $M_{aff}$ at $u=\infty$.
Denote by $\iota_{aff}:M_{aff}\rightarrow \mathbb{A}^{1}\times\mathbb{A}^{1}\times\mathbb{P}^{1}\times\mathbb{A}^{1}$ a map $\iota_{aff}(x_1,x_2,U,y)\mapsto (x_1,x_2,(U:1),y)$ and let
$\iota_{\infty}:M_{\infty}\rightarrow \mathbb{A}^{1}\times\mathbb{A}^{1}\times\mathbb{P}^{1}\times\mathbb{A}^{1}$ be a map $\iota_{\infty}(x_1,x_2,y)\mapsto (x_1,x_2,(1:0),y)$. 
Define
\begin{equation}\label{eq:M}
	M=\iota_{aff}(M_{aff})\cup \iota_{\infty}(M_{\infty}).
\end{equation}

Our starting point is the observation (see Theorem \ref{thm:threefold}) that the number of $\F_q$-rational points on the threefold $M$
is equal to $q^3+q^2+\tilde{M}_{2,q}(\mathcal{E}_U)$ for any prime power $q$.

Furthermore, we consider the morphism $\pi:M \rightarrow \mathbb{A}^2$, \begin{equation}\label{eq:pi_morphism}
\pi(x_1,x_2,(U:s),y)=(x_1,x_2)
\end{equation}
and denote by $M_{x_1,x_2}=\pi^{-1}(x_1,x_2)$ the fiber over $(x_1,x_2)$. We show that $\tilde{M}_{2,q}(\mathcal{E}_U)$ is determined by the degenerate fibers of the
fibration $\pi$. More precisely, if we denote by $$\Delta(x_1,x_2):=P(x_1)Q(x_2)-P(x_2)Q(x_1)$$ the square root of the discriminant of the conic $M_{x_1,x_2}$, and by $$C=M_\infty \cap \pi^{-1}(\Delta)$$ the 2:1 lift of the curve $$\Delta: \Delta(x_1,x_2)=0$$ to the surface $M_\infty: P(x_1)P(x_2)=y^2$ then in Theorem \ref{thm:Second_moment_curve_formula} we show that for an odd prime power $q$ we have
\begin{equation}\label{eq:sec_moment}
\tilde{M}_{2,q}(\mathcal{E}_U)=q\left( -\# \Delta(\Fq)+\#C(\Fq)+ \left[ \sum_{P(x) \equiv 0} \phi_q(Q(x))\right]^2\right)
\end{equation}
for the unique degree $2$ character $\phi_q:\Fq^{\times}\rightarrow\{-1,1\}$.

The formula \eqref{eq:sec_moment} has a reinterpretation in terms of yet another curve. This curve is going to be used to compute the average of $\tilde{M}_{2,p}(\mathcal{E}_U)$ over primes $p$ in some generality. The computation of averages of moments of elliptic curves has a long and rich history. In particular, the first moment average was intensively studied by Rosen and Silverman \cite{Rosen_Silverman}. For the second moment we discuss in detail the application of averages in the context of the Bias conjecture \ref{conj:bias_conjecture}.
For a generic choice of polynomials $P(x)$ and $Q(x)$ (in the sense of Section \ref{sec:typical_polynomials}) we simplify the formula above by introducing a certain genus two curve which is related to $C$ and $\Delta$. 
 Our~choice of the ''generic'' polynomials in Section \ref{sec:typical_polynomials} was inspired by the following observations:

\begin{itemize}
	\item The curve $\Delta(x_1,x_2)=0$ is the union of the line $\ell: x_1=x_2$, and another curve $\tilde{\Delta}: \tilde{\Delta}(x_1,x_2)=0$, where $\tilde{\Delta}(x_1,x_2)=\Delta(x_1,x_2)/(x_1-x_2)$. The curve $\tilde{\Delta}$ can be both irreducible and reducible but in the geometrically irreducible and reduced case its genus is at most $1$.
	\item  The curve $\tilde{C}=M_\infty \cap \pi^{-1}(\tilde\Delta)$ has geometric genus at most $3$ if it is geometrically irreducible and geometrically reduced.
\end{itemize}
\begin{definition}
	Let $K$ be a field of characteristic not equal to $2$. A pair of curves $(\tilde{\Delta},\tilde{C})$ is {\bf $K$-typical} if and only if both curves are geometrically reduced and irreducible and the geometric genus is $1$ and $3$, respectively. We say that the polynomials $P$ and $Q$ are $K$-typical if the condition holds for the corresponding curves.
\end{definition}
	
In Section \ref{sec:generic}, we reformulate what it means for the pair of polynomials $(\tilde{\Delta},\tilde{C})$ to be $K$-typical.
In Lemmas \ref{lem:Delta_sing}, \ref{lem:C_sing} and Corollary \ref{cor:singular}, we study singular points of $\tilde{\Delta}$ and $\tilde{C}$ as well as the fields of definition of their resolutions in the smooth models $\overline{C}$ and $\overline{\Delta}$ of $\tilde{C}$ and $\tilde{\Delta}$ respectively.
Therefore, from Corollary \ref{cor:final} and Corollary \ref{prop:D} we obtain that for prime powers $q$ coprime with $2$ the formula \eqref{eq:sec_moment} takes the following form
\begin{align}
\tilde{M}_{2,q}(\mathcal{E}_U)&=q\left( \#\overline{C}(\Fq)-\#\overline{\Delta}(\Fq)+q-\#S(\Fq)\right),\nonumber\\
&= q\left(\#\overline{D}(\Fq)+q-\#S(\Fq)\right)\label{eq:main}
\end{align}
where $S \subset \Delta$ is an intersection of the line $\ell$. The curve $\tilde{\Delta}$, and $\overline{D}$ is a genus two quotient of $\overline{C}$ by the involution whose restriction to $\tilde{C}$ is given by $\tau_2:(x_1,x_2,y)\mapsto (x_2,x_1,-y)$.

The family $\pi:M\rightarrow\mathbb{A}^2$ of conics can be analysed from a point of view of Chow groups as in \cite{Beauville_Prym}. For a fixed $u$ the fiber $M_{u}$ is typically a rational elliptic surface. 
For each $u$ the existence of a birational map $\xi_{u}:\mathbb{P}^2\dashrightarrow M_{u}$ and hence of a dominant map $\xi:\mathbb{P}^2\times\mathbb{P}^1\dashrightarrow M$ implies that $M$ is uniruled and unirational. 
A complete non-singular model $\tilde{M}$ of $M$ has a non-trivial motive $h^3(\tilde{M})$ which can be related in the case of conic bundles to the Prym variety of the pair of curves $\tilde{C}\rightarrow\tilde{\Delta}$.
The curve $\tilde{\Delta}$ has a natural interpretation as the discriminant curve of the conic bundle $\pi$ and $\tilde{C}$ corresponds to the Fano variety of lines on $\tilde{C}$, cf. \cite{Nagel_Saito}.
In fact, as proved in \cite{Mumford_Prym}, the Prym variety $Prym(\overline{C}/\overline{\Delta})$ has dimension $2$ and has a polarization of type $(1,2)$ and is linked to a genus $2$ curve constructed from $\overline{C}$. In the context of typical pairs $(\tilde{\Delta},\tilde{C})$ the sought after curve is $\overline{D}$.

\subsection{Application to Bias conjecture}
In this section, let $\mathcal{E}_{U}$ denote an arbitrary elliptic curve over $\mathbb{Q}(U)$.
Rosen and Silverman \cite{Rosen_Silverman} have proved a conjecture of Nagao which implies that the first moment $M_{1,p}(\mathcal{E}_{U})$ of a rational elliptic surface is related to the Mordell-Weil rank of the group $\mathcal{E}_{U}(\mathbb{Q}(U))$. 
In precise terms the limit of 
$$\frac{1}{X}\sum_{p\leq X} M_{1,p}(\mathcal{E}_{U}) \frac{\log p}{p}$$ 
for $X\rightarrow\infty$ exists and is equal to $-\mathrm{rank}\ \mathcal{E}_{U}(\mathbb{Q}(U))$.
\begin{remark}
It follows from the Abel summation formula that the limit above is equal to the limit $$\frac{1}{\pi(X)}\sum_{p\leq X} \frac{M_{1,p}(\mathcal{E}_{U})}{p}$$ as $X\rightarrow\infty$, where $\pi(X)$ is the prime-counting function.
\end{remark}
It follows that the numbers $M_{1,p}(\mathcal{E}_{U})/p$ (i.e., the coefficients $a_{u,p}$) have negative bias (and the larger the rank of the family, the greater the bias). 

Inspired by this result, Miller, in his thesis \cite{Miller_thesis} and subsequently with his co-authors in \cite{Miller_et_al_biases} initiated a study of bias for the second moments $M_{2,p}(\mathcal{E}_{U})$.  For families of hyperelliptic curves the Bias conjecture was studied in \cite{Higher_genus_bias}.

A formula of the form \eqref{eq:motivic_sum} implies a canonical stratification of the second moment $$M_{2,p}(\mathcal{E}_{U})=\sum_{i=0}^{4}f_{i}(p)$$ with the property that $|f_{i}(p)|\leq C p^{i/2}$ where the constant $C$ depends only on the choice of the family $\mathcal{E}_{U}$. Namely, we define $$f_i(p)=\sum_{j}\alpha_{i,j}^{(p)},$$
where $j$ varies over a finite set of size independent of $p$(for almost every $p$).

For the sequence of integers $\{f_{i}(p)\}_{p}$  parametrized by prime numbers $p$, we define the average $\mu(\{f_i(p)\})$ to be the limit
$$\mu(\{f_i(p)\}):=\lim_{X\rightarrow\infty}\frac{1}{\pi(X)}\sum_{p\leq X}\frac{f_{i}(p)}{p^{i/2}},$$ if such a limit exists.
It follows from the result of Michel \cite{Michel} that the leading term $f_4(p)$ of $M_{2,p}$ has a positive average.
\begin{theorem}[Michel]
	For a family of elliptic curves $\mathcal{E}_{U}$ with non-constant $j$-invariant the average $\mu(\{f_{4}(p)\})$ exists and is positive. 
\end{theorem}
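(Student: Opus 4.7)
The plan is to identify $f_4(p)$ cohomologically, using Deligne's Weil II bounds together with the Grothendieck--Lefschetz trace formula, and then read off its average from a pure weight-$4$ motivic piece that turns out to be (up to lower weight pieces) a single Tate motive.

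First, from $V\otimes V\simeq \Sym^2 V\oplus \wedge^2 V$ and $\wedge^2 V=\det V=\mathbb{Q}_\ell(-1)$ I would use the identity
\[
a_{k,p}^2 \;=\; \Tr(\Frob_p\mid \Sym^2 V_k) + p,
\]
where $V_k=H^1_{et}(\mathcal{F}_{k,\overline{\F}_p},\mathbb{Q}_\ell)$. Summing over $k\in U(\F_p)$, with $U\subset\mathbb{P}^1$ the open set of good reduction, yields
\[
M_{2,p}(\mathcal{F}_k) \;=\; \sum_{k\in U(\F_p)}\Tr(\Frob_p\mid \Sym^2 V_k) \;+\; p\cdot |U(\F_p)| \;+\; (\text{bad fiber corrections}).
\]
The term $p\cdot |U(\F_p)|$ is already motivic: it splits, via $|U(\F_p)|=p+O(1)$, into the Tate piece $p^2$ (a single copy of $\mathbb{Q}_\ell(-2)$) plus weight $\le 2$ terms that feed into $f_{\le 2}$. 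The bad-fiber corrections are bounded by a constant times $p$ and hence also contribute only to weight $\le 2$.

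Next I would treat the $\Sym^2$ sum via Grothendieck--Lefschetz,
\[
\sum_{k\in U(\F_p)}\Tr(\Frob_p\mid \Sym^2 V_k) \;=\; \sum_{i=0}^{2}(-1)^i \Tr\bigl(\Frob_p\mid H^i_c(U_{\overline{\F}_p},\Sym^2 \mathcal{F}_\ell)\bigr).
\]
Since $\Sym^2\mathcal{F}_\ell$ is lisse of pure weight $2$ on $U$, Deligne's Weil II \cite{Deligne_Weil_II} gives that $H^i_c$ has weights $\le i+2$; thus only $H^2_c$ can possibly contribute to $f_4(p)$, and by Poincar\'e duality it equals the coinvariant module $(\Sym^2 V)_{\pi_1^{\mathrm{geom}}(U)}(-1)$. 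Here the non-constancy of $j$ enters crucially: by Igusa's theorem the geometric monodromy image in $\GL(V)$ is Zariski-dense in $\mathrm{SL}_2$, and $\Sym^2$ of the standard $\mathrm{SL}_2$-representation is irreducible and non-trivial, so its coinvariants vanish. Therefore $H^2_c(U_{\overline{\F}_p},\Sym^2\mathcal{F}_\ell)=0$, and the entire $\Sym^2$ sum is of weight $\le 3$, hence irrelevant to $f_4$.

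Combining both parts, the weight-$4$ piece of $M_{2,p}(\mathcal{F}_k)$ is exactly $p^2$, so
\[
\mu(\{f_4(p)\}) \;=\; \lim_{x\to\infty}\frac{1}{\pi(x)}\sum_{p\le x}\frac{f_4(p)}{p^2} \;=\; 1 \;>\; 0.
\]
The main obstacle is two-fold. First, one must pin down the stratification $M_{2,p}=\sum_i f_i(p)$ so that $f_4$ is unambiguously the Frobenius trace on the weight-$4$ graded piece of Michel's motivic decomposition \cite{Michel} of the cohomology $W_{2,\ell}$; this requires the general purity/weight formalism for $i_*\Sym^{r}\mathcal{F}_\ell$ on $\mathbb{P}^1$. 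Second, one must justify the vanishing of the top coinvariants rigorously, ruling out extra pure weight-$4$ contributions from the local monodromies at the boundary $\mathbb{P}^1\setminus U$; this is handled by Euler--Poincar\'e/Swan-conductor estimates combined with the open-image input from the non-constant $j$ hypothesis.
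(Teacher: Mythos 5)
Your proposal is correct, and it reconstructs the argument the paper attributes to Michel but does not itself reproduce: the paper simply cites the theorem and remarks afterwards that Michel's proof used Deligne's Weil II and Katz's $\ell$-adic monodromy results. Your chain of ideas — the tensor-square decomposition $a_{k,p}^2 = \Tr(\Frob_p\mid\Sym^2 V_k) + p$, Grothendieck--Lefschetz on the good-reduction locus $U\subset\mathbb{P}^1$, Weil II bounding $H^i_c(U,\Sym^2\mathcal{F}_\ell)$ in weight $\le i+2$, Poincar\'e duality identifying $H^2_c$ with the $(-1)$-twisted geometric coinvariants, and big monodromy (Igusa/Katz) forcing those coinvariants to vanish for non-constant $j$ — is precisely that strategy. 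You even obtain the sharper conclusion that $f_4(p) = p^2$ for all but finitely many $p$, hence $\mu(\{f_4(p)\})=1$, which is consistent with the paper's Case~7 remark that $2p^2$ leading terms only arise from non-minimal fibers and ``disagree with Michel's theorem.''

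Two small points of precision worth tightening. First, you say the bad-fiber corrections are ``bounded by a constant times $p$''; in fact $a_{k,p}^2\in\{0,1\}$ at a bad fiber and the number of bad $k\in\mathbb{F}_p$ is $O(1)$, so this contribution is $O(1)$, weight $0$ — the looser bound is harmless but unnecessary. Second, you should make explicit that $V_k$ is taken for the minimal model at $p$ (as the paper's definition of $a_{k,p}$ requires); for all but finitely many $p$ the Weierstrass model is already minimal on $U$, so this affects nothing. The attribution to ``Igusa's theorem'' for Zariski-density of geometric monodromy in $\mathrm{SL}_2$ is the standard shorthand, though in this generality the result is usually credited to Katz, which is exactly the reference the paper invokes.
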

Our choice of stratification $M_{2,p}=\sum f_i(p)$ was implicit in \cite{Michel}, where Michel (using the fundamental work of Deligne \cite{Deligne_Weil_II} and Katz \cite{Katz} about $\ell$-adic cohomology and monodromy groups) studied second moments by studying the action of Frobenius on certain cohomology groups.

The terms $f_i(p)$ where $i<4$ are called the \textit{lower order terms} and $i$ is the \textit{degree} of that term. 

In \cite{Miller_et_al_biases} and \cite{Mackall_bias} the authors studied the second moments $M_{2,p}(\mathcal{E}_U)$ of the families of elliptic curves $\mathcal{E}_U$.
Most of their examples are pencils of cubics 
\begin{equation}\label{eq:EU}
	\mathcal{E}_{U}:y^2=P(x)U+Q(x),
\end{equation}
where $\deg P(x),\deg Q(x)\leq 3$ for which they expressed the second moment $M_{2,p}(\mathcal{E}_{U})$ in terms of Legendre symbol sums. For some choices of (in the context of our work degenerate) polynomials $P(x)$ and $Q(x)$ they computed this sum and proposed the following conjecture. 

\begin{conjecture}[Bias conjecture]\label{conj:bias_conjecture}
	Let $\mathcal{E}_U$ be a one-parameter family of elliptic curves over $\QQ(U)$. The averages of the lower order terms in the second moment expansion of $M_{2,p}(\mathcal{E}_{U})$ exist, and the largest lower order term that does not average to $0$ is on the average negative.
\end{conjecture}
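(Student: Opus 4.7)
The plan is to extract the bias directly from the explicit formula \eqref{eq:main}. Writing $\#\overline{D}(\F_p) = p + 1 - a_p(\overline{D})$ with $|a_p(\overline{D})| \leq 4\sqrt{p}$ by the Weil bound, and using $M_{2,p}(\mathcal{F}_k) = \tilde{M}_{2,p}(\mathcal{F}_k) - a_{\infty,p}^2$, the formula expands to
\begin{equation*}
M_{2,p}(\mathcal{F}_k) \;=\; 2p^2 \;+\; p \;-\; p\cdot a_p(\overline{D}) \;-\; p\cdot \#S(\F_p) \;-\; a_{\infty,p}^2.
\end{equation*}
Reading off weights gives $f_4(p) = 2p^2$ (recovering Michel's positive leading term), $f_3(p) = -p\cdot a_p(\overline{D})$ pure of weight $3$ since $a_p(\overline{D})$ sums four weight-one Frobenius eigenvalues, and $f_2(p) = p - p\cdot\#S(\F_p) - a_{\infty,p}^2$ collecting the weight-$2$ contributions.

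Next, I would invoke the Sato-Tate conjecture for the Jacobian of $\overline{D}$, which is the running hypothesis of the theorem. It asserts that $a_p(\overline{D})/\sqrt{p}$ equidistributes with respect to the pushforward of Haar measure on $ST(\overline{D}) \subseteq USp(4)$. For every Sato-Tate group in the Fit\'{e}--Kedlaya--Rotger--Sutherland classification for abelian surfaces, the standard four-dimensional symplectic representation carries no invariant vector, so by Schur's lemma its trace character integrates to zero against Haar measure. Consequently $\mu(\{f_3(p)\}) = 0$.

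It then remains to evaluate
\begin{equation*}
\mu(\{f_2(p)\}) \;=\; 1 \;-\; \mu(\{\#S(\F_p)\}) \;-\; \mu\bigl(\{a_{\infty,p}^2/p\}\bigr).
\end{equation*}
For the last term, applying Sato-Tate to the elliptic fiber $\mathcal{F}_\infty: y^2 = P(x)$ and integrating $t^2$ against the semicircular or CM measure gives $\mu(\{a_{\infty,p}^2/p\}) = 1$. For the middle term, $S$ is cut out on the diagonal by the Wronskian $W(x) := P(x)Q'(x) - P'(x)Q(x)$, a nonzero polynomial of degree at most five under the typicality assumption (were $W \equiv 0$, the curve $\tilde{\Delta}$ would fail to have the prescribed genus). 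A Chebotarev density argument, equivalently Burnside's lemma applied to the Galois action on the roots of each irreducible factor, yields $\mu(\{\#S(\F_p)\}) = r$, where $r \geq 1$ is the number of irreducible $\QQ$-factors of $W$. Combining, $\mu(\{f_2(p)\}) = 1 - r - 1 = -r \leq -1 < 0$, so $f_2$ is the largest lower-order term with non-vanishing average and that average is strictly negative, which would establish the Bias Conjecture for the cubic pencil.

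The main obstacle, to my mind, is the motivic bookkeeping: one must verify that the naive weight-graded expansion of the displayed formula for $M_{2,p}$ genuinely coincides with the canonical stratification $f_i(p) = \sum_j \alpha_{i,j}^{(p)}$ of \eqref{eq:motivic_sum}, so that no hidden pure weight-$2$ piece in the cohomology of the threefold $M$ leaks into $f_2$ and shifts its average, and likewise that no rogue weight-$3$ contribution spoils the Sato-Tate cancellation in $f_3$. The conic-bundle structure $\pi: M \to \AA^2$, the Prym-variety description of $h^3$ of a smooth model of $M$, and the detailed control of the singularities of $\tilde{\Delta}$ and $\tilde{C}$ developed earlier in the paper should furnish exactly the cohomological input needed to rule these pathologies out.
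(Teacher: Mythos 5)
You set out to prove what the paper labels as a \emph{conjecture}, and it remains open in general: what the paper actually establishes is Theorem~\ref{thm:main2}, the specialization to cubic pencils with a $\mathbb{Q}$-typical pair $(\tilde\Delta,\tilde C)$, conditionally on Sato--Tate for $\overline D$. Read as a proof of that theorem, your route is essentially the paper's own in Section~\ref{sec:contribution}: expand the explicit point-count formula, identify the weight strata, kill the weight-$3$ term using Sato--Tate for the genus-$2$ quotient $\overline D$, and compute the weight-$2$ average from Chebotarev applied to $S$.

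The concrete problem is an arithmetic slip inherited from the second line of \eqref{eq:main}, which is inconsistent with Corollary~\ref{prop:D}: that corollary gives $\#\overline C(\Fq)-\#\overline\Delta(\Fq)=\#\overline D(\Fq)-(q+1)$, not $\#\overline D(\Fq)$. Feeding the correct identity into the first line of \eqref{eq:main} (equivalently, into Corollary~\ref{cor:final}, which is what the paper's proof actually uses) yields
\[
\tilde M_{2,p}(\mathcal F_k)=p^2-p\cdot a_p(\overline D)-p\cdot\#S(\Fp),
\]
so $f_4(p)=p^2$, not $2p^2$, and $f_2(p)=-p\cdot\#S(\Fp)-a_{\infty,p}^2$ with no spurious $+p$. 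In your computation the extra $+1$ happens to cancel the $-1$ from $\mu\bigl(\{a_{\infty,p}^2/p\}\bigr)=1$, so you still land on a strictly negative average and the qualitative conclusion holds; but your value $-r$ is wrong. The theorem gives $-m-\delta$, i.e.\ $-r-1$ when $\mathcal F_\infty$ is smooth. You also tacitly assume $\mathcal F_\infty$ is always an elliptic curve; when it is a singular cubic, $a_{\infty,p}^2\in\{0,1\}$ lives in weight $0$, not weight $2$ (this is the $\delta=0$ case in the theorem).

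Two minor points: the Wronskian $PQ'-P'Q$ of two cubics has degree at most $4$ (the degree-$5$ terms cancel), and $\tilde\Delta(x,x)=-\bigl(PQ'-P'Q\bigr)$ rather than $+\bigl(PQ'-P'Q\bigr)$, though the zero set and factor count are unaffected. Your closing worry about whether the naive weight grading of the display coincides with the canonical motivic stratification is exactly what Theorem~\ref{thm:motivic_summation} addresses: once $\tilde M_{2,p}$ is written as a signed combination of point counts on explicit varieties, Dwork rationality together with the Weil conjectures fixes the $\alpha_{i,j}^{(p)}$, so no hidden weight-$2$ or weight-$3$ piece can leak in.
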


\begin{remark}
	The formulation of the Bias conjecture proposed in the previous work (cf. \cite{Miller_et_al_biases},\cite{Higher_genus_bias},\cite{Mackall_bias}) was not precise enough since it did not uniquely define the stratification of $M_{2,p}$ used in a definition of bias. 
\end{remark}
For a generic choice of polynomials $P(x)$ and $Q(x)$, as a consequence of Corollary \ref{cor:final}, we prove the Bias conjecture for the pencil of cubics \eqref{eq:0}.

Consider a smooth projective curve \(C\) of genus \(g>0\) defined over \( \mathbb{Q} \) with a fixed model. For each prime we consider a reduction $C_p$ of the model of $C$ modulo a prime number $p$. For all but finitely many $p$ the scheme $C_p$ is smooth over $\mathbb{F}_{p}$. Let
\[
\theta_p = p + 1 - \#C_p(\mathbb{F}_p).
\]

In our proof, we require the following results concerning the vanishing of the moments of the sequence \( \frac{\theta_p}{\sqrt{p}} \), associated with certain curves of genus one and a specific genus two curve (determined by \( P(x) \) and \( Q(x) \)).

When \( g = 1 \), it follows from the Modularity theorem \cite{Breuil_Conrad_Diamond_Taylor} and a standard analytic argument via the Wiener-Ikehara theorem that the limit
\begin{equation}\label{eq:first_moment}
    \lim_{x \to \infty} \frac{\sum_{p \leq x} \frac{\theta_{p}}{\sqrt{p}}}{\pi(x)}
\end{equation}
is zero. Similarly, the analytic continuation of the symmetric square \( L \)-function of an elliptic curve \cite{Gelbart_Jacquet_SymmetricSquare} implies that the limit
\begin{equation}\label{eq:second_moment}
    \lim_{x \to \infty} \frac{\sum_{p \leq x} \frac{\theta_{p}^2}{p}}{\pi(x)}
\end{equation}
is also zero.

For \( g = 2 \), using the potential automorphy result for genus two curves \cite{Boxer_Calegari_Pilloni}, along with Brauer induction and the theory of cyclic base change \cite{Arthur_Clozel}, it is shown in \cite{Taylor_Noah_distributions} that the first moment \eqref{eq:first_moment} is again zero.

\begin{remark}

These results about the moments follow from the Sato-Tate conjecture. The conjecture holds for elliptic curves over totally real fields \cite{Harris_et_al_Sato_Tate}, and for genus two curves, it has been proved in all but one case cf. \cite{Johansson, Taylor_Noah_distributions}, following the classification of Sato-Tate groups by \cite{Fite_Kedlaya_Rotger_Sutherland_genus2}.

\end{remark}

For the family \eqref{eq:EU}, as a corollary of Theorem \ref{thm:Second_moment_curve_formula}, we prove directly that the sum $M_{2,p}(\mathcal{E}_{U})$ is motivic.

\begin{theorem}\label{thm:motivic_summation}
	Let $P$, $Q$ be polynomials in $\mathbb{Z}[x]$ and let $\mathcal{E}_{U}:P(x)U+Q(x)=y^2$ be a family of curves. For any prime number $p$ the sum $M_{2,p}(\mathcal{E}_{U})$ is motivic.
\end{theorem}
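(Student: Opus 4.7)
The plan is to invoke Theorem~\ref{thm:Second_moment_curve_formula} and verify motivicity term by term. I would rely on three closure properties of sequences admitting the decomposition \eqref{eq:motivic_sum}--\eqref{eq:alpha_weights}: (i)~closure under integer linear combinations; (ii)~closure under pointwise products, since $(\sum_i\alpha_i^m)(\sum_j\beta_j^m)=\sum_{i,j}(\alpha_i\beta_j)^m$ with $|\alpha_i\beta_j|=|\alpha_i||\beta_j|$, so weights add; and (iii)~the Tate twist $(c_{p^m})_m\mapsto(p^m c_{p^m})_m$, which shifts all weights up by~$2$. Writing $M_{2,p^m}(\mathcal{F}_k)=\tilde M_{2,p^m}(\mathcal{F}_k)-a_{\infty,p^m}^{2}$, it suffices to show that $\tilde M_{2,p^m}(\mathcal{F}_k)$ and $a_{\infty,p^m}$ are motivic.

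For odd $p$, formula~\eqref{eq:sec_moment} reads
\[
\tilde M_{2,p^m}(\mathcal{F}_k)=p^m\Bigl(\#C(\mathbb{F}_{p^m})-\#\Delta(\mathbb{F}_{p^m})+S_{p^m}^{2}\Bigr),\qquad S_q:=\sum_{x:\,P(x)\equiv 0}\phi_q(Q(x)).
\]
The point counts $\#C(\mathbb{F}_{p^m})$, $\#\Delta(\mathbb{F}_{p^m})$, and $\#\mathcal{F}_{\infty}(\mathbb{F}_{p^m})$ are motivic by the Grothendieck--Lefschetz trace formula applied to $\ell$-adic cohomology with compact support, with Deligne's weight bounds~\cite{Deligne_Weil_II} giving \eqref{eq:alpha_weights}. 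Hence $a_{\infty,p^m}=p^m+1-\#\mathcal{F}_\infty(\mathbb{F}_{p^m})$ is motivic, and by the product rule so is $a_{\infty,p^m}^{2}$. For the residual case $p=2$, one can reduce instead to Theorem~\ref{thm:threefold}, which directly presents $\tilde M_{2,q}(\mathcal{F}_k)=\#M(\mathbb{F}_q)-q^{3}-q^{2}$ as a point count on a fixed threefold, and hence as a motivic sequence.

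The remaining task is $S_q^{2}$, which I would treat by a geometric reinterpretation of $S_q$. Let $Z=\{x:P(x)=0\}$ and $Z'=\{(x,y):P(x)=0,\ y^{2}=Q(x)\}$ be the natural zero-dimensional schemes over $\mathbb{Z}[1/2]$. Using $\#\{y\in\mathbb{F}_q:y^{2}=a\}=1+\phi_q(a)$ for $a\neq 0$ and $\phi_q(0)=0$, a direct fibrewise count of $Z'\to Z$ yields
\[
S_q=\#Z'(\mathbb{F}_q)-\#Z(\mathbb{F}_q),
\]
so $S_q$ is motivic. Because $Z,Z'$ are zero-dimensional, every occurring Frobenius eigenvalue has weight~$0$, so in a presentation $S_{p^m}=\sum_j\gamma_j^m$ one has $|\gamma_j|=1$. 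The product rule then presents $S_{p^m}^{2}$ motivically, still at weight~$0$, and the Tate twist by $q$ lifts this contribution to weight~$2$, comfortably inside the range $0\leq i\leq 4$ required by \eqref{eq:motivic_sum} for $r=2$.

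The main obstacle I anticipate is the last step: verifying that squaring a motivic sequence really fits the strict framework of \eqref{eq:motivic_sum}--\eqref{eq:alpha_weights} uniformly in $m$, and that the weight arithmetic after the Tate twist by~$q$ never pushes a contribution past the expected bound. The zero-dimensionality of $Z$ and $Z'$ makes this bookkeeping clean and keeps all weights under control. Combining the three motivic contributions to $\tilde M_{2,p^m}(\mathcal{F}_k)$ and subtracting the motivic square $a_{\infty,p^m}^{2}$ then yields the desired motivic decomposition of $M_{2,p^m}(\mathcal{F}_k)$.
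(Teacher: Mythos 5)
Your proposal is correct and follows essentially the same route as the paper: both proofs start from Theorem~\ref{thm:Second_moment_curve_formula}, observe that the quadratic-character sum $\sum_{P(x)\equiv 0}\phi_q(Q(x))$ equals the difference of point counts on the zero-dimensional schemes $\{(x,y):P(x)=0,\,Q(x)=y^2\}$ and $\{P=0\}$ (the paper's $V_A$ and $P$ are your $Z'$ and $Z$), and then obtain motivicity from rationality of zeta functions of varieties together with weight bounds. The only stylistic difference is that the paper encodes the conclusion in a single explicit product of Tate-twisted zeta functions, whereas you argue via closure of motivic sequences under linear combinations, products, and Tate twists; you are also slightly more explicit about subtracting $a_{\infty,p}^2$ to pass from $\tilde{M}_{2,p}$ back to $M_{2,p}$, a step the paper leaves implicit.
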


In Section \ref{sec:contribution} we study the average of $\#S(\Fp)=\# \{(x,x)\in\Fp^2: \widetilde{\Delta}(x,x)=0\}$ over the primes. Using some elementary representation theory of finite group, in Proposition \ref{cor:average} we show that this average is equal to the number of irreducible factors into which $S$ decomposes over $\mathbb{Q}$.

Denote by $\overline{D}$ the quotient of $\overline{C}$ by involution whose restriction to $\tilde{C}$ is given by $\tau_2:(x_1,x_2,y)\mapsto (x_2,x_1,-y)$. In the $K$-typical case the curve $\overline{D}$ has geometric genus two by Corollary \ref{prop:D}. 
Taken together, these findings imply our main result.

\begin{theorem}\label{thm:main2}
	Let $P$ and $Q$ be two polynomials of degree at most $3$ over $\mathbb{Q}$ and let $\mathcal{E}_{U}: P(x)U+Q(x)=y^2$ be the associated pencil of cubics. Assume that the associated pair of curves $(\tilde{\Delta},\tilde{C})$ is $\mathbb{Q}$-typical. Let $\overline{D}$ be the genus $2$ curve obtained as the quotient of $\overline{C}$ by the involution $\tau_2$. 
	
 The Bias conjecture holds for the pencil $\mathcal{E}_{U}$ and the bias equals $-m-\delta$ where $m$ is the number of irreducible factors of the polynomial $S$ and $\delta\in\{0,1\}$, vanishing only when the fiber $\mathcal{E}_{\infty}$ is singular. Each number in the set $\{-1,-2,-3,-4,-5\}$ corresponds to an infinite family of non-isomorphic pencils $\mathcal{E}_{U}$.
\end{theorem}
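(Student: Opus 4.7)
My plan is to derive the bias directly from the closed formula \eqref{eq:main} by reading off the canonical Frobenius-weight stratification of $M_{2,p}(\mathcal{F}_k)$, averaging each stratum, and finally exhibiting explicit infinite families realising each admissible value. First, I would rewrite $\tilde{M}_{2,p}(\mathcal{F}_k)=p\bigl(\#\overline{C}(\Fp)-\#\overline{\Delta}(\Fp)+p-\#S(\Fp)\bigr)$ using the Prym/Jacobian decomposition built into the construction of $\overline{D}$: since the geometric genera satisfy $g(\overline{C})=3$, $g(\overline{\Delta})=1$, $g(\overline{D})=2$, the decomposition $\mathrm{Jac}(\overline{C})\sim\mathrm{Jac}(\overline{\Delta})\times\mathrm{Jac}(\overline{D})$ gives $a_{C,p}-a_{\Delta,p}=a_{D,p}$ with $|a_{D,p}|\leq 4\sqrt{p}$. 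Substituting into \eqref{eq:main} and using $M_{2,p}(\mathcal{F}_k)=\tilde{M}_{2,p}(\mathcal{F}_k)-a_{\infty,p}^2$ yields
\[
M_{2,p}(\mathcal{F}_k)=p^2-p\,a_{D,p}-p\,\#S(\Fp)-a_{\infty,p}^2.
\]
Regrouping by Frobenius weight: $f_4(p)=p^2$ (pure Tate, weight $4$), $f_3(p)=-p\,a_{D,p}$ (weight $3$), and $f_2(p)$ consists of $-p\,\#S(\Fp)$ together with the weight-$2$ component of $-a_{\infty,p}^2$.

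Second, I would compute each average. The Sato--Tate hypothesis for $\overline{D}$ gives $\mu(\{a_{D,p}/\sqrt{p}\})=0$, hence $\mu(\{f_3(p)\})=0$. Proposition \ref{cor:average} gives $\mu(\{\#S(\Fp)\})=m$, so the piece $-p\,\#S(\Fp)$ contributes $-m$ to $\mu(\{f_2(p)\})$. The contribution of $-a_{\infty,p}^2$ depends on the smoothness of $\mathcal{F}_\infty$: if $\mathcal{F}_\infty$ is singular, $a_{\infty,p}$ is uniformly bounded and the term only contributes to weight $0$; if $\mathcal{F}_\infty$ is smooth it is an elliptic curve over $\mathbb{Q}$ with non-constant $j$-invariant, and using the Weil identity $a_{\infty,p}^2=2p+a_{\infty,p^2}$ I would split it into two weight-$2$ pieces and apply the unconditional Sato--Tate theorem for $\mathcal{F}_\infty$, which gives $\mu(\{a_{\infty,p^2}/p\})=-1$ and hence a total weight-$2$ contribution of $-2+1=-1$. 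Adding up, $\mu(\{f_2(p)\})=-m-\delta$ with $\delta$ as stated; since $\mu_3=0$ and $\mu_2\neq 0$, this is the bias.

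For the realisation claim, I would observe that $S$ is the scheme-theoretic intersection of $\tilde{\Delta}$ with the diagonal line $\ell$, cut out on $\ell$ by the Wronskian-type polynomial $P'(x)Q(x)-P(x)Q'(x)$ of degree at most $4$; hence $m\in\{1,2,3,4\}$ and the bias $-m-\delta$ covers $\{-1,-2,-3,-4,-5\}$ as $\delta$ ranges over $\{0,1\}$. For each target value I would construct one explicit pair $(P,Q)\in\mathbb{Q}[x]^2$ with the prescribed factorisation of $P'Q-PQ'$ and prescribed singularity type of $P$, verify $\mathbb{Q}$-typicality of $(\tilde{\Delta},\tilde{C})$, and then produce an infinite family of pairwise non-isomorphic pencils via a one-parameter twist of $(P,Q)$ preserving both the factorisation pattern and typicality.

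The principal obstacle is the weight-$2$ bookkeeping in the smooth-$\mathcal{F}_\infty$ case: the term $-a_{\infty,p}^2$ must be carefully split (via the Weil identity) into the pure Tate piece $-2p$ and the $\mathrm{Sym}^2$ piece $-a_{\infty,p^2}$, both of weight $2$, with Sato--Tate applied to the latter alone; otherwise the bias is miscounted by an integer. A secondary technical issue is the realisation step, where for each of the five target values one must simultaneously arrange the factorisation of $P'Q-PQ'$, the singularity type of $\mathcal{F}_\infty$, and the $\mathbb{Q}$-typicality of $(\tilde{\Delta},\tilde{C})$; beyond these points, the argument is essentially bookkeeping once Corollary \ref{cor:final} and Proposition \ref{cor:average} are in hand.
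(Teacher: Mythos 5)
Your argument is correct and runs structurally parallel to the paper's: the same weight stratification $f_4=p^2$, $f_3=-p\,d_p$, $f_2=-p\,\#S(\mathbb{F}_p)-a_{\infty,p}^2$ (in the smooth case), the same invocation of Sato--Tate on $\overline{D}$ and of Proposition \ref{cor:average}. The genuine points of difference are minor. To establish $\#\overline{C}(\mathbb{F}_p)-\#\overline{\Delta}(\mathbb{F}_p)=-d_p$ you invoke the Kani--Rosen/Prym isogeny $\mathrm{Jac}(\overline{C})\sim\mathrm{Jac}(\overline{\Delta})\times\mathrm{Jac}(\overline{D})$ (valid here since $C_3,C_4$ have genus $0$), whereas the paper proves the equivalent point-count identity directly by an elementary orbit argument on $\overline{C}(\overline{\mathbb{F}}_q)$ under the Klein four-group (Proposition \ref{prop:quotient}); your version is conceptually cleaner, the paper's is self-contained and works uniformly over every finite field without invoking isogeny theory. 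Your Weil-identity splitting $a_{\infty,p}^2=2p+a_{\infty,p^2}$ yields the same weight-$2$ contribution $-1$ that the paper obtains by directly averaging $a_{\infty,p}^2/p\to 1$ under Sato--Tate; note that ``non-constant $j$-invariant'' is the wrong hypothesis for the fixed fiber $\mathcal{F}_\infty$ (a single curve has constant $j$), and in any case the second moment equals $1$ for CM curves as well via Hecke equidistribution, so no non-CM hypothesis is needed. Finally, for the realisation step you correctly identify $S(x)=P'(x)Q(x)-P(x)Q'(x)$, of degree at most $4$, giving $m\le 4$; but be careful that a one-parameter quadratic twist of $(P,Q)$ does not change the $j$-invariant and hence does not produce geometrically non-isomorphic pencils---the paper's explicit parametrised families verify that the $j$-invariant actually varies, which is what the ``infinitely many non-isomorphic pencils'' claim requires.
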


We do not offer a complete description of the bias for the families where $(\tilde{\Delta},\tilde{C})$ are not typical but we have some partial results (see Section \ref{sec:non_typical}) which strongly suggests that the Bias conjecture is true as well in that case. 

 \section*{Acknowledgements}
 The authors were supported by the Croatian Science Foundation under the project no.~IP-2018-01-1313. MK acknowledges the support from the QuantiXLie Center of Excellence, a project co-financed by the Croatian Government and European Union through the European Regional Development Fund - the Competitiveness and Cohesion
 Operational Programme (Grants KK.01.1.1.01.0004 and PK.1.1.02.0004) and support from the Croatian Science Foundation under the project no.~IP-2022-10-5008.
 BN acknowledges the support by Dioscuri program initiated by the Max
 Planck Society, jointly managed with the National Science Centre
 (Poland), and mutually funded by the Polish Ministry of Science and
 Higher Education and the German Federal Ministry of Education and
 Research.
 
 The authors would like to thank the participants of the SFARA seminar, Wojciech Gajda, Sławek Cynk, Paweł Borówka, Jędrzej Garnek and Wojtek Wawrów for constructive discussions about the earlier version of this paper. We are also grateful to the University of Bristol for providing us with the access to Magma cluster CREAM.

We would like to express our gratitude to Francesc Fité for his valuable advice on specific aspects of the automorphy lifting theorems. We also thank the referee for their insightful remarks and suggestions.
 
\section{The second moment and the point count on the Kummer threefold}\label{sec:Miller}

Let $q$ be an odd prime power. In this section, we express the second moment $M_{2,q}(\mathcal{E}_U)$ of family \eqref{eq:0} in terms of $\#M(\F_q)$ the number of $\F_q$-rational points on threefold $M$ defined in \eqref{eq:M}. 

It will be convenient to extend the second moment sum to
include the fiber $\mathcal{E}_\infty$, so we define $$\tilde{M}_{2,q}(\mathcal{E}_U)=M_{2,q}(\mathcal{E}_U)+a_{\infty,q}^2,$$ where $a_{\infty,q}=q-\#\{(x,y)\in \F_q^2: P(x)=y^2\}$. 
The following proposition is required.
\begin{proposition}[\protect{\cite[Thm. 2.1.2]{Berndt_Evans_Williams}}]\label{prop:quadratic_sum}
	Let $q$ be a prime power $p^s$ where $p>2$.
	Let $\phi_{q}:\mathbb{F}_{q}^{\times}\rightarrow\mathbb{C}^{\times}$ be the unique multiplicative character of order $2$. Let~$\alpha,\beta,\gamma\in\mathbb{F}_{q}$ be given elements. Let $\Delta=4\alpha\gamma-\beta^2$, then
	\begin{equation*}
	\sum_{t \in \mathbb{F}_{q}}\phi_{q}(\alpha t^2+\beta t+\gamma)=\left\{\begin{array}{cc}
	-\phi_q(\alpha), & \alpha\neq 0, \Delta\neq 0\\
	(q-1)\phi_{q}(\alpha) ,& \alpha\neq 0, \Delta=0\\
	 -\phi_q(\alpha)& \alpha=0, \Delta\neq 0\\
	  q\phi_{q}(\gamma)& \alpha=0, \Delta=0
	\end{array}\right.
	\end{equation*}
\end{proposition}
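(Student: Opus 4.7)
The plan is to reduce the four cases to standard facts about quadratic character sums by completing the square. I would organize the argument by first splitting on $\alpha = 0$ versus $\alpha \neq 0$, and within each branch treat the degenerate subcase $\Delta = 0$ separately. Throughout, the convention $\phi_q(0)=0$ is used so that $\#\{y\in\F_q: y^2=a\}=1+\phi_q(a)$ holds uniformly.

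When $\alpha=0$ the sum collapses to $\sum_{t\in\F_q}\phi_q(\beta t+\gamma)$ and the discriminant reads $\Delta=-\beta^2$. If $\beta=0$ (so $\Delta=0$) every summand equals $\phi_q(\gamma)$, yielding $q\phi_q(\gamma)$. If $\beta\neq 0$ (so $\Delta\neq 0$) the map $t\mapsto \beta t+\gamma$ is a bijection of $\F_q$, so the sum equals $\sum_{u\in\F_q}\phi_q(u)=0$, matching $-\phi_q(0)=0$ under the above convention.

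When $\alpha\neq 0$, since $p>2$ I complete the square as
\[
\alpha t^2+\beta t+\gamma=\alpha\bigl(t+\tfrac{\beta}{2\alpha}\bigr)^2+\tfrac{\Delta}{4\alpha},
\]
and substitute $s=t+\beta/(2\alpha)$, which is a bijection of $\F_q$. The sum becomes $S:=\sum_{s\in\F_q}\phi_q\bigl(\alpha s^2+\Delta/(4\alpha)\bigr)$. If $\Delta=0$, then $S=\phi_q(\alpha)\sum_{s\neq 0}\phi_q(s^2)=(q-1)\phi_q(\alpha)$, using $\phi_q(s^2)=1$ for $s\neq 0$.

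In the remaining case $\alpha\neq 0$, $\Delta\neq 0$, I convert the character sum to a point count on an affine conic. Writing $c=\Delta/(4\alpha)\neq 0$ and using $\#\{y\in\F_q:y^2=a\}=1+\phi_q(a)$,
\[
N:=\#\{(s,y)\in\F_q^2: y^2-\alpha s^2=c\}=q+S.
\]
If $\alpha=\alpha_0^2$ is a nonzero square, the form factors as $(y-\alpha_0 s)(y+\alpha_0 s)=c$, giving $N=q-1$; if $\alpha$ is a non-square, $y^2-\alpha s^2$ is the norm form of $\F_{q^2}/\F_q$, whose value $c\neq 0$ is attained by exactly $|\ker N_{\F_{q^2}/\F_q}|=q+1$ elements, so $N=q+1$. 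In both subcases $N=q-\phi_q(\alpha)$, hence $S=-\phi_q(\alpha)$, completing the proof. The only mild obstacle is bookkeeping with $\phi_q(0)=0$ across the degenerate rows; the substantive input is the standard dichotomy for the conic $y^2-\alpha s^2=c$, which can alternatively be phrased via the Jacobi-sum identity $\sum_s \phi_q(s^2+a)=-1$ for $a\in\F_q^\times$.
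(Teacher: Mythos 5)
Your proof is correct, and it covers all four cases carefully — including the degenerate rows that the paper's proof explicitly skips as ``non-trivial case only'' (the paper delegates the remaining cases to the cited reference). For the main case $\alpha\neq 0$, $\Delta\neq 0$, both arguments reduce $\sum_t\phi_q(\alpha t^2+\beta t+\gamma)$ to a point count on an affine conic, but the keys differ: the paper first normalizes to $\phi_q(\alpha)\sum_t\phi_q(t^2+A)$, passes to the curve $x^2=z^2+A$, and invokes Ireland--Rosen's formula $N=q+\phi_q(-1)J(\phi_q,\phi_q)$ together with the Jacobi-sum evaluation $J(\phi_q,\phi_q)=-\phi_q(-1)$, whereas you keep the $\alpha$ inside, count points on $y^2-\alpha s^2=c$ directly, and split on whether $\alpha$ is a square (factor the conic, $N=q-1$) or a non-square (norm form of $\F_{q^2}/\F_q$, $N=q+1$), so that $N=q-\phi_q(\alpha)$ uniformly. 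Your route is somewhat more elementary, avoiding the Jacobi-sum machinery in favor of the hyperbola/norm dichotomy, and is arguably cleaner since it does not require a separate normalization step; the paper's route is shorter on the page because it outsources the required evaluations to standard references. Both are valid, and the tradeoff is essentially self-containedness versus brevity.
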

\begin{proof}
	Let $S$ denote the sum $\sum_{t \in \mathbb{F}_{q}}\phi_{q}(\alpha t^2+\beta t+\gamma)$. Let us consider the only non-trivial case: $\Delta\neq 0$ and $\alpha\neq 0$. Under these assumptions it follows that $S = \phi_{q}(\alpha)\sum_{t \in \mathbb{F}_{q}}\phi_q(t^2+(4\alpha\gamma-\beta^2)/(4\alpha^2))$.
	Let $A$ denote $(4\alpha\gamma-\beta^2)/(4\alpha^2)$. Notice that $A\neq 0$. We have that $\sum_{t \in \Fq}\phi_q(t^2+A)$ equals $N(x^2=z^2+A)-q$, where $N$ denote the number of $\Fq$-points on the affine curve $x^2=z^2+A$. It follows from \cite[Chap.8, \S 7, Thm.5]{Ireland_Rosen} that $N(x^2=z^2+A)=q+\phi_{q}(-1)J(\phi_q,\phi_q)$ where $J(\phi_q,\phi_q)$ denotes the Jacobi sum. By \cite[Chap.8, \S 3, Thm.1 (c)]{Ireland_Rosen}; \cite[Thm. 2.1.1(c)]{Berndt_Evans_Williams} we have that $J(\phi_q,\phi_q)=-\phi_q(-1)$ from which the proposition follows.
	
\end{proof}

In \cite{Mackall_bias} the authors derive the following formula for the second moment ~$M_{2,p}(\mathcal{E}_{U})$

\begin{equation}\label{eq:1}
	M_{2,p}(\mathcal{E}_U)= p \left[ \sum_{P(x) \equiv 0} \phi_p(Q(x))\right]^2-
	\left[ \sum_{x \in\mathbb{F}_{p}} \phi_p(P(x))\right]^2+p \sum_{\Delta(x,y)\equiv 0}\phi_p(P(x)Q(x)), 
\end{equation}
where $\Delta(x,y)=P(x)Q(y)-P(y)Q(x).$

Essentially, the proof of \eqref{eq:1} in \cite{Mackall_bias} translates to the proof of a total point count on the threefold $M$ defined in \eqref{eq:M}.
\begin{theorem} \label{thm:threefold}
	Let $P, Q\in \mathbb{Z}[x]$ be two polynomials of degree at most $3$. Let $\mathcal{E}_{U}: y^2=P(x)U+Q(x)$. For a prime power $q$, we have
	$$\#M(\Fq) = q^3+q^2 +\tilde{M}_{2,q}(\mathcal{E}_U).$$
\end{theorem}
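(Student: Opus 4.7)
The plan is to compute $\#M(\Fq)$ directly from a character sum expansion and then match the result with $\tilde{M}_{2,q}(\mathcal{F}_k)$. Since $\iota_{aff}(M_{aff})$ sits over affine $k$ while $\iota_{\infty}(M_{\infty})$ sits over $k = \infty$, the two loci are disjoint and
\[
\#M(\Fq) = \#M_{aff}(\Fq) + \#M_{\infty}(\Fq).
\]

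To handle the affine piece I would fix $k \in \Fq$ and set $f_k(x) := P(x)k + Q(x)$. With the convention $\phi_q(0) = 0$, the number of $y \in \Fq$ solving $y^2 = a$ is $1 + \phi_q(a)$; summing over $(x_1, x_2) \in \Fq^2$ and using the multiplicativity of $\phi_q$ (trivially extended to $0$) gives
\[
\#M_{aff,k}(\Fq) = q^2 + \left(\sum_{x \in \Fq}\phi_q(f_k(x))\right)^{\!2}.
\]
Combined with the elementary identity $\#\{(x,y) \in \Fq^2 : y^2 = f_k(x)\} = q + \sum_{x}\phi_q(f_k(x))$ and the convention $a_{k,q} = q + 1 - \#\mathcal{F}_k(\Fq)$ (where $\mathcal{F}_k$ is the projective closure contributing a single point at infinity), this yields $\sum_x \phi_q(f_k(x)) = -a_{k,q}$. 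Summing over $k \in \Fq$ therefore produces $\#M_{aff}(\Fq) = q^3 + M_{2,q}(\mathcal{F}_k)$.

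The same character-sum argument applied to $M_{\infty} : y^2 = P(x_1)P(x_2)$ gives $\#M_{\infty}(\Fq) = q^2 + \bigl(\sum_x \phi_q(P(x))\bigr)^2$, and the definition $a_{\infty, q} = q - \#\{(x,y) \in \Fq^2 : P(x) = y^2\}$ coupled with the same one-variable point-count formula yields $a_{\infty,q} = -\sum_x \phi_q(P(x))$. Hence $\#M_{\infty}(\Fq) = q^2 + a_{\infty,q}^2$, and combining the two contributions gives the desired equality
\[
\#M(\Fq) = q^3 + q^2 + M_{2,q}(\mathcal{F}_k) + a_{\infty,q}^2 = q^3 + q^2 + \tilde{M}_{2,q}(\mathcal{F}_k).
\]

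The only mildly delicate step is verifying that the identification $\sum_x \phi_q(f_k(x)) = -a_{k,q}$ is uniform across all $k$, including the finitely many parameters where $f_k$ drops degree or the fiber becomes singular. This is a bookkeeping matter rather than a genuine obstacle: the extended convention $a_{k,q} = q + 1 - \#\mathcal{F}_k(\Fq)$ with $\#\mathcal{F}_k(\Fq)$ counted as the affine solution set plus a single point at infinity is tailored exactly so that the character-sum identity holds for every $k$ without case distinction, so the summation over $k$ goes through unchanged.
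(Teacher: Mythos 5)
Your proof is correct, and it takes the opposite route to the one in the paper. You compute $\#M_{aff}(\Fq)$ by summing over $k$ first: for a fixed $k$ the multiplicativity of $\phi_q$ (extended by $\phi_q(0)=0$) immediately factors the fiber count as $q^2+\bigl(\sum_x\phi_q(f_k(x))\bigr)^2=q^2+a_{k,q}^2$, and summing over $k\in\Fq$ plus the $k=\infty$ contribution gives the claim directly. The paper instead sums over $(x_1,x_2)$ first, treating the fiber as a conic in $(k,y)$ and invoking the quadratic character-sum formula of Proposition~\ref{prop:quadratic_sum}, with a case split on the vanishing of $\Delta(x_1,x_2)$ and $P(x_i)$; the resulting expression is then matched against the second moment via Miller's formula \eqref{eq:1}. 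Your route is shorter and avoids Proposition~\ref{prop:quadratic_sum} entirely, whereas the paper's route is longer here but deliberately introduces the conic fibration $\pi:M\to\mathbb{A}^2$ that is the engine behind Theorem~\ref{thm:Second_moment_curve_formula} and the rest of the paper; the authors' own remark after the proof makes exactly this ``double counting'' point.

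Two small things to tidy. First, you need the characteristic-$2$ case, which your $\phi_q$-argument does not cover: when $2\mid q$ the squaring map is a bijection on $\Fq$, so every fiber has exactly $q$ affine points, $a_{k,q}=a_{\infty,q}=0$, $\tilde{M}_{2,q}=0$, and $\#M(\Fq)=q^3+q^2$ by direct count. Second, the ``uniformity'' of $\sum_x\phi_q(f_k(x))=-a_{k,q}$ is not really a delicate point to flag and defer: it is the definition of $a_{k,q}$ used in $\tilde{M}_{2,q}$, namely $a_{k,q}=q+1-\#\mathcal{F}_k(\Fq)$ with $\#\mathcal{F}_k(\Fq)$ taken to be the affine count plus one point at infinity for every $k$. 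With that convention spelled out once, the identity holds for all $k$ (including degenerate fibers) with no case distinction, and the summation over $k$ is immediate.
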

\begin{proof}
	For $2|q$ observe that $\tilde{M}_{2,q}(\mathcal{E}_{U})=0$ and $\#M(\Fq)=q^3+q^2$ because $t\mapsto t^2$ is an automorphism of $\mathbb{F}_{q}$.
	Assume from now on that $2\nmid q$.
	It is easy to see that $\#M_\infty(\Fq)=a_{\infty,q}^2+q^2$.
To compute $$\#M(\Fq) =  a_{\infty,q}^2+q^2+ \sum_{x_1,x_2\in \Fq}\sum_{u \in \Fq}\left(1+\phi_q\left( (P(x_1)u+Q(x_1))(P(x_2)u+Q(x_2)\right) \right)$$ we apply Proposition \ref{prop:quadratic_sum}
to the polynomial $\left( P(x_1)U+Q(x_1) \right)\cdot \left(P(x_2)U+Q(x_2)\right) \in \QQ(x_1,x_2)[U]$  (note that discriminant condition becomes $\Delta(x_1,x_2) \in\Fq^{\times}$). We obtain
\begin{align*}
\#M(\Fq) &= \sum_{\substack{x_1,x_2 \in \Fq \\ \Delta(x_1,x_2) \not \equiv 0}}\left(q-\phi_q(P(x_1)P(x_2)) \right)+ \sum_{\substack{x_1,x_2 \in \Fq \\ \Delta(x_1,x_2) \equiv 0\\P(x_1) \not \equiv 0}}\left( q+(q-1)\phi_q(P(x_1)P(x_2)) \right)\\
&+\sum_{\substack{x_1,x_2 \in \Fq\\ \Delta(x_1,x_2) \equiv 0\\P(x_1) \equiv 0}}\left(q+ q \phi_q(Q(x_1)Q(x_2))\right)+a_{\infty,q}^2+q^2\\
&= \left(q^2-\# \Delta(\Fq)\right)q - \sum_{\substack{x_1,x_2 \in \Fq \\ \Delta(x_1,x_2) \not \equiv 0}}\phi_q(P(x_1)P(x_2))+\# \Delta(\Fq) q\\&+q\sum_{\substack{x_1,x_2 \in \Fq \\ \Delta(x_1,x_2)\equiv 0}}\phi_q(P(x_1)P(x_2))-\sum_{\substack{x_1,x_2 \in \Fq \\ \Delta(x_1,x_2)\equiv 0}}\phi_q(P(x_1)P(x_2))\\& + q \sum_{\substack{x_1,x_2 \in \Fq \\ \Delta(x_1,x_2)\equiv 0\\P(x_1) \equiv 0}}\phi_q(Q(x_1)Q(x_2))+a_{\infty,q}^2+q^2\\
&= q^3-
\left[ \sum_{x \in \Fq} \phi_q(P(x))\right]^2+q\sum_{\substack{x_1,x_2 \in \Fq \\ \Delta(x_1,x_2)\equiv 0}}\phi_q(P(x_1)P(x_2))\\&+q \sum_{\substack{x_1,x_2 \in \Fq \\ \Delta(x_1,x_2)\equiv 0\\P(x_1) \equiv 0, P(x_2) \equiv 0 }}\phi_q(Q(x_1)Q(x_2))
+a_{\infty,q}^2+q^2
\end{align*}
hence the claim follows.	
\end{proof}

The proof of the previous theorem is essentially an example of the double counting method. The points of the set $M(\F_q)$ were counted in two ways - by using the fibration of $M$ by elliptic K3 surfaces defined by function $U$ one can express $\# M(\F_q)$ in terms of second moment sum $\sum_{u \in \F_q} a_{u,q}^2$, while using fibration by rational elliptic surfaces defined by $x_1$ one gets Miller's formula \eqref{eq:1}. Our main idea is to use for counting the third fibration $\pi:M \rightarrow \AA^2$ defined in \eqref{eq:pi_morphism}.

\begin{theorem}\label{thm:Second_moment_curve_formula}
	Let $q$ be a prime power such that $2\nmid q$. We have
	$$\#M(\Fq)=q^3+q^2-q \# \Delta(\Fq)+q \#C(\Fq)+ q \left[ \sum_{P(x) \equiv 0} \phi_q(Q(x))\right]^2.$$
	In particular
	$$\tilde{M}_{2,q}(\mathcal{E}_{U})=q\left( -\# \Delta(\Fq)+\#C(\Fq)+ \left[ \sum_{P(x) \equiv 0} \phi_q(Q(x))\right]^2\right).$$
\end{theorem}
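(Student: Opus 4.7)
The plan is to count $\#M(\mathbb{F}_q)$ a third way, by means of the conic fibration $\pi: M \to \mathbb{A}^2$ from~\eqref{eq:pi_morphism}, and then compare with Theorem~\ref{thm:threefold}. For each base point $(x_1,x_2) \in \mathbb{F}_q^2$ the fiber $M_{x_1,x_2}$ decomposes as the affine part from $M_{aff}$ (points $(k,y)$ with $L(k) := (P(x_1)k + Q(x_1))(P(x_2)k + Q(x_2)) = y^2$) together with the point(s) over $k = \infty$ from $M_\infty$ (solutions of $y^2 = P(x_1)P(x_2)$). Using the convention $\phi_q(0) = 0$, the identity $\#\{y : y^2 = a\} = 1 + \phi_q(a)$ gives the fiber count
\[ q + \sum_{k \in \mathbb{F}_q} \phi_q(L(k)) + 1 + \phi_q(P(x_1)P(x_2)). \]
Summing over $(x_1,x_2)$ yields $\#M(\mathbb{F}_q) = q^3 + q^2 + T_1 + T_2$, with $T_2 = \bigl(\sum_x \phi_q(P(x))\bigr)^2$ and $T_1 = \sum_{x_1,x_2,k}\phi_q(L(k))$.

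Next I would apply Proposition~\ref{prop:quadratic_sum} to $L(k)$ viewed as a quadratic in $k$. A short calculation shows that the discriminant $4\alpha\gamma - \beta^2$ of $L$ equals $-\Delta(x_1,x_2)^2$, so the ``$\Delta = 0$'' case of the proposition corresponds exactly to the vanishing of $\Delta(x_1,x_2)$. Splitting the $T_1$ sum according to the four cases of the proposition (whether $\alpha = P(x_1)P(x_2)$ vanishes, whether $\Delta(x_1,x_2)$ vanishes) and collecting with $T_2$, the cases with $\alpha \neq 0$ contribute $-T_2 + q\sum_{\Delta(x_1,x_2) = 0} \phi_q(P(x_1)P(x_2))$. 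In the remaining case ($\alpha = 0$ and $\Delta = 0$), a small subcase analysis shows that if $P(x_1) = 0$ but $P(x_2) \neq 0$, then $\Delta = 0$ forces $Q(x_1) = 0$, so $\phi_q(Q(x_1)Q(x_2)) = 0$; the only nonvanishing contributions come from $(x_1,x_2)$ with $P(x_1) = P(x_2) = 0$ and assemble into the square $q\bigl(\sum_{P(x) \equiv 0} \phi_q(Q(x))\bigr)^2$.

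To identify $\sum_{\Delta(x_1,x_2) = 0} \phi_q(P(x_1)P(x_2))$ with $\#C(\mathbb{F}_q) - \#\Delta(\mathbb{F}_q)$, I would expand the definition $C = M_\infty \cap \pi^{-1}(\Delta)$: for each $(x_1,x_2) \in \Delta(\mathbb{F}_q)$ the number of $y$-lifts is $1 + \phi_q(P(x_1)P(x_2))$, so summation gives exactly this relation. Putting everything together yields the first displayed formula, and combining with Theorem~\ref{thm:threefold} gives the stated expression for $\tilde{M}_{2,q}(\mathcal{F}_k)$. The main technical obstacle is the degenerate case $\alpha = 0$: one must carefully verify that contributions from $(x_1,x_2)$ where exactly one of $P(x_1), P(x_2)$ vanishes do not spuriously appear in the final formula, and this hinges on the factorization $\Delta(x_1,x_2) = P(x_1)Q(x_2) - P(x_2)Q(x_1)$ forcing $Q(x_1) = 0$ (or $Q(x_2) = 0$) in the borderline case, after which the quadratic character makes those terms vanish.
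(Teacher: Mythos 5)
Your argument is correct and follows essentially the same route as the paper: both count $\#M(\mathbb{F}_q)$ fiber by fiber along the conic fibration $\pi: M \to \mathbb{A}^2$, apply Proposition~\ref{prop:quadratic_sum} to the quadratic $L(k)$ (whose discriminant is $-\Delta(x_1,x_2)^2$), handle the degenerate fibers with $P(x_1)P(x_2) = 0$ by a short subcase analysis, and read off $\sum_{\Delta=0}\phi_q(P(x_1)P(x_2)) = \#C(\mathbb{F}_q) - \#\Delta(\mathbb{F}_q)$ from the definition of $C$. The only cosmetic difference is that you organize the bookkeeping as a single character-sum computation, whereas the paper phrases the same case split a)--d) in terms of the geometry of the degenerate conics $M_{x_1,x_2}$ and the multiplicity $n(x_1,x_2)$.
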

\begin{proof}
In the proof of Theorem \ref{thm:threefold} we expressed $\#M(\Fq)$ as the sum $$\sum_{(x_1,x_2)\in \mathbb{A}^2(\Fq)} \#\pi^{-1}(x_1,x_2),$$
where $\pi$ is the morphism \eqref{eq:pi_morphism}. 
Let  $M_{x_1,x_2}$ denote the fiber $\pi^{-1}(x_1,x_2)$ over  $(x_1,x_2)\in \mathbb{A}^2(\Fq)$. Note that $\Delta(x_1,x_2)$ is the square-root of the discriminant of $\left( P(x_1)U+Q(x_1) \right)\cdot \left(P(x_2)U+Q(x_2)\right)$ with respect to $U$.

\begin{enumerate}
	\item [a)] Assume $\Delta(x_1,x_2)\ne 0$. 	
	In this case $M_{x_1,x_2}$ is a geometrically irreducible conic. It follows from formula \eqref{prop:quadratic_sum} that an affine part of $M_{x_1,x_2}$ has $q-\phi_q(P(x_1)P(x_2))$ $\Fq$-rational points, hence $\#M_{x_1,x_2}(\Fq)=q+1$. 
	\item[b)] Assume $\Delta(x_1,x_2)=0$ and $P(x_1)\ne 0$. From a definition of $\Delta(x_1,x_2)$ it follows that for every $u\in \Fq$
	$$\left( P(x_1)u+Q(x_1) \right)\cdot \left(P(x_2)u+Q(x_2)\right)=\frac{P(x_2)}{P(x_1)} \left(P(x_1)u+Q(x_1)\right)^2.$$
	Therefore, $\#M_{x_1,x_2}(\Fq)=1$ if $ \phi_q(P(x_1)P(x_2))=-1$, $\#M_{x_1,x_2}(\Fq)=2(q-1)+1+2=2q+1$ if $ \phi_q(P(x_1)P(x_2))=1$ and  $\#M_{x_1,x_2}(\Fq)=q+1$ if $P(x_2)=0$.
	
	This result can be interpreted in the following way. Denote by $C$ a curve $\pi^{-1}(\Delta) \cap M_\infty$. In particular $C(\Fq)=\{ (x_1,x_2,y)\in \Fq^3: P(x_1)P(x_2)=y^2 \textrm{ and } \Delta(x_1,x_2)=0\}$. If we denote by $n(x_1,x_2)$ the number of $\Fq$-rational points on $C$ above $(x_1,x_2)$, then $\#M_{x_1,x_2}(\Fq) = n(x_1,x_2) q + 1$.
	
	\item[c)] Assume $\Delta(x_1,x_2)=0$, $P(x_1)=0$ and $P(x_2)\ne 0$.
	Similar to case $b)$, we have that $\#M_{x_1,x_2}(\Fq) = n(x_1,x_2) q + 1$.
	\item[d)] Assume $\Delta(x_1,x_2)=0$, $P(x_1)=0$ and $P(x_2)=0$. Then $\#M_{x_1,x_2}(\Fq)$ is equal to
	$$\#\{(x_1,x_2,u,y)\in \Fq^4: Q(x_1)Q(x_2)=y^2\}+1=\left( 1+\phi_q(Q(x_1)Q(x_2))\right)q+1.$$
\end{enumerate}	
	The total contribution of elements $(x_1,x_2)$ from the case a) is
	$$\sum_{\Delta(x_1,x_2)\ne 0} \# M_{x_1,x_2}(\Fq)=(q^2-\#\Delta(\Fq))(q+1),$$
	while the total contribution of elements $(x_1,x_2)$ in b), c) and d) is equal to
	$$\# C(\Fq)q + \#\Delta(\Fq) + \sum_{\substack{(x_1,x_2)\in \Delta(x_1,x_2)(\Fq)\\P(x_1)=P(x_2)=0}}\left(-(q+1)+\left( 1+\phi_q(Q(x_1)Q(x_2))\right)q+1 \right).$$
	By combining all contributions, we arrive at the desired result.
\end{proof}

As a consequence of this result, we can prove Theorem \ref{thm:motivic_summation}.
\begin{proof}[Proof of Theorem \ref{thm:motivic_summation}]
	For prime $p=2$ the statement is trivial. For $p>2$ the claim follows from Theorem \ref{thm:Second_moment_curve_formula} and Dwork's theorem on the rationality of the zeta function of a variety defined over $\mathbb{F}_{p}$. The zeta function $Z(X,T)$ associated with the variety $X$ over $\mathbb{F}_{p}$ belongs to $\mathbb{Q}(T)$. Let $\phi_p$ be the unique order $2$ character on $\mathbb{F}_{p}^{\times}$. To~conclude, we need to argue that $A=\left[ \sum_{P(x) \equiv 0} \phi_p(Q(x))\right]^2$,  originates from the point count on a variety. One can write down a variety $V_{A}:\{(x,y):P(x)=0,Q(x)=y^2\}$ for which $(\#V_{A}(\mathbb{F}_p)-\#\{P=0\}(\mathbb{F}_p))^2= A$ holds. The same argument works for any prime power $q=p^s$. Hence, the zeta function of the second-moment sum $M_{2,p}(\mathcal{E}_{U})$ is the product 
	$$\frac{ Z(C,pT)\cdot Z(V_A\times V_A,pT)\cdot Z(P\times P,pT)}{Z(\Delta,pT)\cdot Z(V_A\times P,pT)^2}.$$
\end{proof}
	
Let $P,Q\in\Fq[x]$ be two non-zero polynomials.
Curve $\Delta=0$ is the union of the line $\ell: x_1=x_2$ and the curve $\tilde\Delta=0$, where  $\tilde\Delta(x_1,x_2)=\Delta(x_1,x_2)/(x_1-x_2)$. Let $S$ denote their intersection, defined as 
$$S(\Fq)=\{(x,x)\in \Fq^2:\tilde{\Delta}(x,x)=0\},$$
By abuse of notation let $P=\{x: P(x)=0\}$ denote the set of zeros of the polynomial $P(x)$, and by $$P \cap S = \{x:P(x)=\tilde{\Delta}(x,x)=0\}$$
the intersection of $P$ with $S$. 
Let $\tilde{C}=M_{\infty} \cap \pi^{-1}(\tilde{\Delta})\subset C$.

\begin{proposition}\label{prop:Curve_defect}
	Let $q$ be an odd prime power and let $P,Q\in \Fq[x]$ be non-zero polynomials. We have
\begin{align*}	
 \#C(\Fq)-\# \Delta(\Fq)&=\#\tilde{C}(\Fq)-\#\tilde{\Delta}(\Fq)+q-\#S(\Fq)\\&-\#P(\Fq)+\#(P\cap S)(\Fq).
\end{align*} 
\end{proposition}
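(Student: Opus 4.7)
The plan is to proceed by inclusion–exclusion, exploiting the decomposition $\Delta = \ell\cup\tilde{\Delta}$ and the induced decomposition of $C$. Since $\pi^{-1}$ commutes with union on the target and $M_\infty$ is fixed, write $C = C_\ell \cup \tilde{C}$, where $C_\ell := M_\infty \cap \pi^{-1}(\ell)$ is the preimage of the diagonal, and compute each piece in terms of the defining data $P,Q$.

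First I would evaluate $\#\Delta(\Fq)$. Because $\ell\cap\tilde{\Delta}$ consists exactly of the diagonal points $(x,x)$ with $\tilde{\Delta}(x,x)=0$, that is of $S(\Fq)$, inclusion–exclusion gives
\[
\#\Delta(\Fq) = \#\ell(\Fq) + \#\tilde{\Delta}(\Fq) - \#S(\Fq) = q + \#\tilde{\Delta}(\Fq) - \#S(\Fq).
\]
Next I would compute $\#C_\ell(\Fq)$. Over the diagonal the defining equation of $M_\infty$ becomes $y^2 = P(x)^2$, i.e.\ $y = \pm P(x)$, so the fiber above each $x\in\Fq$ has two $\Fq$-points when $P(x)\neq 0$ and a single point (namely $y=0$) when $P(x)=0$; hence
\[
\#C_\ell(\Fq) = 2q - \#P(\Fq).
\]

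I would then compute the intersection $C_\ell\cap\tilde{C}$: a point lies in both iff $x_1=x_2=x$, $\tilde{\Delta}(x,x)=0$, and $y=\pm P(x)$, so the count is $2\#S(\Fq) - \#(P\cap S)(\Fq)$ by the same dichotomy. Applying inclusion–exclusion for $C$ yields
\[
\#C(\Fq) = \#\tilde{C}(\Fq) + 2q - \#P(\Fq) - 2\#S(\Fq) + \#(P\cap S)(\Fq).
\]

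Finally I would subtract the first formula from the second; the terms $q$ and $\#S(\Fq)$ combine cleanly and the stated identity
\[
\#C(\Fq)-\#\Delta(\Fq)=\#\tilde{C}(\Fq)-\#\tilde{\Delta}(\Fq)+q-\#S(\Fq)-\#P(\Fq)+\#(P\cap S)(\Fq)
\]
drops out. There is no serious obstacle here; the only subtlety worth flagging is the bookkeeping in the second bullet, where the fibre of $M_\infty\to\mathbb{A}^2$ over a diagonal point $(x,x)$ with $P(x)=0$ degenerates from two points to one, and this is precisely what produces the $-\#P(\Fq)+\#(P\cap S)(\Fq)$ correction in the final formula. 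The hypothesis $2\nmid q$ enters only implicitly, via the fact that on the diagonal $y^2=P(x)^2$ has exactly $1$ or $2$ solutions (rather than a single doubled solution, which would occur in characteristic $2$).
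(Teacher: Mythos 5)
Your proof is correct and follows essentially the same argument as the paper: both decompose $\Delta = \ell \cup \tilde{\Delta}$, apply inclusion–exclusion on $\Delta$ and on $C = C_\ell \cup \tilde{C}$, compute $\#C_\ell(\Fq) = 2q - \#P(\Fq)$ and $\#(C_\ell\cap\tilde{C})(\Fq) = 2\#S(\Fq) - \#(P\cap S)(\Fq)$ via the $y=\pm P(x)$ dichotomy, and subtract.
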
 
\begin{proof}
We have that $$\#C(\Fq)=\#\tilde{C}(\Fq)+\# (\pi^{-1}(\ell)\cap M_\infty)(\Fq)-\# (\pi^{-1}(S))(\Fq).$$ Note that $\#\Delta(\Fq)=q+\#\tilde{\Delta}(\Fq)-\#S(\Fq)$ and  $$\# (\pi^{-1}(\ell)\cap M_\infty)(\Fq)=2q-\#\{P=0\},$$ hence
\begin{align*}
	\#C(\Fq)-\# \Delta(\Fq) &=\#\tilde{C}(\Fq) - \#\tilde{\Delta}(\Fq)+(2q-q)-\#P(\Fq)\\
	&-\#S(\Fq)+\#(P\cap S)(\Fq),
\end{align*}	
since $\# (\pi^{-1}(S))(\Fq)=2\#S(\Fq)-\#(P\cap S)(\Fq)$. The claim follows.
\end{proof}

\section{Typical case}\label{sec:typical_polynomials}
In the introduction, we formulated sufficient conditions for the Bias conjecture to hold. The pair of curves $(\tilde{\Delta},\tilde{C})$ is $K$-typical if the respective curves have geometric genus equal to $3$ and $1$.

In this section, we introduce a more technical notion of genericity of the pair of curves $(\tilde{\Delta},\tilde{C})$. We prove towards the end of this section that both notions are actually the same. The latter is used for the computations in further sections.

\subsection{Genericity} \label{sec:generic}
We work over a base field $K$ of characteristic not equal to $2$. Let~$a_0,a_1,a_2,a_3$, $b_0,b_1,b_2,b_3$ and $x_1,x_2$ be variables in the polynomial ring \[R=K[a_0,a_1,a_2,a_3,b_0,b_1,b_2,b_3,x_1,x_2].\] 
Let $P(x)$ denote the polynomial $a_3x^3+a_2 x^2+a_1x+a_0$ and let $Q(x)$ denote the polynomial $b_3 x^3+b_2 x^2+b_1 x+b_0$.
Let $\widetilde{\Delta}(x_1,x_2)$ denote the polynomial $\frac{P(x_1)Q(x_2)-P(x_2)Q(x_1)}{x_1-x_2}$, $\delta_{1} = \partial_{x_{1}}\widetilde{\Delta}(x_1,x_2)$ be the partial derivative of $\widetilde{\Delta}(x_1,x_2)$ with respect to $x_1$ and let $\delta_{2} = \partial_{x_{2}}\widetilde{\Delta}(x_1,x_2)$ be the partial derivative with respect to $x_2$. 
Let $I$ denote the ideal spanned by $\widetilde{\Delta},\delta_{1}$ and $\delta_{2}$ in the polynomial ring $R$. 

\begin{proposition}
	The closed subscheme $\mathcal{C}_{fin}=V(I)$ of the affine space $\textrm{Spec}(R)$ is geometrically reduced and decomposes into the union of the following $7$ dimensional varieties:
	\begin{itemize}
		\item $\mathcal{D}=\textrm{Spec}(R/(\mu_{i,j}:0\leq i<j\leq 3))$ where $\mu_{i,j}$ is the $2\times 2$ minor $a_i b_j-a_jb_i$ of the matrix $\left(\begin{array}{cccc} a_0 & a_1 & a_2 & a_3 \\ b_0 & b_1 & b_2 & b_3\end{array}\right),$
		\item $\mathcal{V}=\textrm{Spec}(R/(P(x_2),Q(x_2),x_1-x_2)),$
		\item $\mathcal{N}_{1}=\textrm{Spec}(R/(P(x_2),Q(x_2),x_1(\mu_{1,3} + 
		2 \mu_{2,3} x_2)+(\mu_{1,2}+ 
		2 \mu_{1,3} x_2 + \mu_{2,3} x_2^2))),$
		\item $\mathcal{N}_{2}=\textrm{Spec}(R/(P(x_1),Q(x_1),x_2(\mu_{1,3} + 2 \mu_{2,3}x_1)+(\mu_{1,2} + 2 \mu_{1,3} x_1 + \mu_{2,3} x_1^2))),$
		\item $\mathcal{S}=\textrm{Spec}(R/(\mu_{0,1}- 3\mu_{0,3}x_2^2 - \mu_{1,3} x_2^3,
		\mu_{0,2}+3\mu_{0,3}x_2 -\mu_{2,3} x_2^3,
		\mu_{1,2}+3\mu_{1,3}x_2 +3\mu_{2,3}x_2^2, x_1-x_2)),$
	\end{itemize}
	\[\mathcal{C}_{fin} = \mathcal{D}\cup \mathcal{V}\cup \mathcal{N}_{1}\cup\mathcal{N}_{2}\cup\mathcal{S}.\]
\end{proposition}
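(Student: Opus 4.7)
The plan is to establish the decomposition by (i) checking that each listed variety is contained in $V(I)$, (ii) showing the reverse inclusion via a geometric case analysis, and (iii) verifying that each component is geometrically reduced, irreducible, and of dimension $7$. The central computational tool is the expansion
\[P(x_1)Q(x_2)-P(x_2)Q(x_1)=\sum_{0\le i<j\le 3}\mu_{i,j}\bigl(x_1^i x_2^j-x_1^j x_2^i\bigr),\]
which exhibits $\widetilde{\Delta}$ as a $\ZZ$-linear combination of the six minors with polynomial coefficients in $(x_1,x_2)$.

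For (i), on $\mathcal{D}$ all minors vanish so $\widetilde{\Delta}\equiv 0$ as a polynomial, trivially giving the inclusion. For $\mathcal{V},\mathcal{N}_1,\mathcal{N}_2$ we use the common-root factorisation: whenever $c$ is a common root of $P$ and $Q$, writing $P(t)=(t-c)P_1(t)$ and $Q(t)=(t-c)Q_1(t)$ yields
\[\widetilde{\Delta}(x_1,x_2)=(x_1-c)(x_2-c)\,\widetilde{R}(x_1,x_2),\qquad \widetilde{R}(x_1,x_2):=\frac{P_1(x_1)Q_1(x_2)-P_1(x_2)Q_1(x_1)}{x_1-x_2}.\]
The double factor at $(c,c)$ handles $\mathcal{V}$ at once. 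For $\mathcal{N}_1$ the only remaining constraint is $\partial_{x_2}\widetilde{\Delta}|_{x_2=c}=(x_1-c)\widetilde{R}(x_1,c)=0$, and since $\widetilde{R}(x_1,c)$ is a linear polynomial in $x_1$, a direct computation recovers exactly the listed equation; $\mathcal{N}_2$ is symmetric. For $\mathcal{S}$ one computes on the diagonal $\widetilde{\Delta}(x,x)=P'(x)Q(x)-P(x)Q'(x)$ and $\delta_1(x,x)=\delta_2(x,x)=\tfrac{1}{2}(P''Q-PQ'')(x)$, and substituting the relations (S1)--(S3) into both expressions yields $0$ after collecting powers of $x$.

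For (ii), take $(a,b,x_1,x_2)\in V(I)$ and assume we are not in $\mathcal{D}$, so $P,Q$ are linearly independent. Using $f=P(x_1)Q(x_2)-P(x_2)Q(x_1)=(x_1-x_2)\widetilde{\Delta}$, the vanishing of $\widetilde{\Delta},\delta_1,\delta_2$ off the diagonal is equivalent to $f=\partial_{x_1}f=\partial_{x_2}f=0$. These identities say that $(Q(x_2),-P(x_2))^{\top}$ lies in the kernel of the Wronskian matrix $\bigl(\begin{smallmatrix}P(x_1)&Q(x_1)\\P'(x_1)&Q'(x_1)\end{smallmatrix}\bigr)$, and symmetrically for $x_1\leftrightarrow x_2$. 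Either one of these vectors vanishes, placing us in $\mathcal{V}\cup\mathcal{N}_1\cup\mathcal{N}_2$ via the analysis of $\widetilde{R}$ above, or both Wronskians are singular; in the latter case a further case-split using the Pl\"ucker identity $\mu_{0,1}\mu_{2,3}-\mu_{0,2}\mu_{1,3}+\mu_{0,3}\mu_{1,2}=0$ and the linear independence of $P,Q$ forces $x_1=x_2$. On the diagonal, $\widetilde{\Delta}(x,x)=0$ and $\delta_1(x,x)=0$ become two polynomial conditions (A), (B) in the minors and $x$; combining them with Pl\"ucker and splitting on whether $P(x)=Q(x)=0$ produces either $\mathcal{V}$ (common-root case) or exactly the system (S1)--(S3) defining $\mathcal{S}$.

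For (iii), $\mathcal{D}$ is cut out by the $2\times 2$ minors of a $2\times 4$ matrix, a classical determinantal ideal known to be prime and Cohen--Macaulay of codimension $3$, giving $\dim\mathcal{D}=5+2=7$. The varieties $\mathcal{V},\mathcal{N}_1,\mathcal{N}_2$ are affine bundles: once a common root $x_2$ is fixed, the conditions $P(x_2)=Q(x_2)=0$ (and the linear-in-$x_1$ relation for $\mathcal{N}_1,\mathcal{N}_2$) are linear in the coefficients $(a,b)$, giving smooth irreducible fibrations of dimension $7$. For $\mathcal{S}$ the three relations linearly express $(\mu_{0,1},\mu_{0,2},\mu_{1,2})$ in terms of $(\mu_{0,3},\mu_{1,3},\mu_{2,3},x)$, and a direct substitution confirms that the Pl\"ucker identity is tautologically satisfied; hence $\mathcal{S}$ is an $SL_2$-bundle of total dimension $3+3+1=7$ over an open subset of $\AA^4$. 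The main obstacle is the delicate bookkeeping in step (ii), especially isolating (S1)--(S3) as the prime ideal of the non-$\mathcal{V}$ component on the diagonal; to corroborate the computation, the primary decomposition of the ideal $I$ can be verified independently by a Gr\"obner basis calculation in a computer algebra system such as Magma.
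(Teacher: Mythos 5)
Your proposal takes a genuinely different route from the paper, which offers no conceptual argument at all: the paper's proof simply asserts that a Gr\"{o}bner-basis computation in MAGMA produces the primary decomposition and verifies geometric reducedness. Your plan, by contrast, tries to \emph{explain} where each component comes from, and several of your observations are correct and illuminating. In particular, the expansion of $\widetilde{\Delta}$ in terms of the minors $\mu_{i,j}$ is right; the common-root factorisation $\widetilde{\Delta}(x_1,x_2)=(x_1-x_2)^{-1}(x_1-c)(x_2-c)(x_1-x_2)\widetilde{R}$ is sound and, after expressing $P_1,Q_1$ in terms of $a_i,b_j$ modulo $P(x_2)=Q(x_2)=0$, the polynomial $\widetilde{R}(x_1,x_2)$ does reduce exactly (up to sign) to the linear equation $x_1(\mu_{1,3}+2\mu_{2,3}x_2)+(\mu_{1,2}+2\mu_{1,3}x_2+\mu_{2,3}x_2^2)$ defining $\mathcal{N}_1$; the diagonal formulas $\widetilde{\Delta}(x,x)=(P'Q-PQ')(x)$ and $\delta_1(x,x)=\tfrac12(P''Q-PQ'')(x)$ are correct; and the verification that the Pl\"{u}cker relation $\mu_{0,1}\mu_{2,3}-\mu_{0,2}\mu_{1,3}+\mu_{0,3}\mu_{1,2}=0$ is tautologically satisfied on $\mathcal{S}$ is a straightforward check that indeed goes through. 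Your dimension counts for $\mathcal{D}$ (classical determinantal variety of codimension $3$), $\mathcal{V},\mathcal{N}_1,\mathcal{N}_2$ (affine-linear fibrations), and $\mathcal{S}$ (an $SL_2$-bundle) are correct.

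That said, two substantive gaps remain. First, the reverse inclusion (your step (ii)) is not actually proved. You reduce to the case where neither $(Q(x_1),-P(x_1))$ nor $(Q(x_2),-P(x_2))$ vanishes, observe that both Wronskian matrices must then be singular, and claim that this ``forces $x_1=x_2$.'' That implication is not justified: the Wronskian $W(t)=P(t)Q'(t)-P'(t)Q(t)$ can vanish at two distinct points, and one still needs to analyse the compatibility condition that $(Q(x_2),-P(x_2))$ spans the kernel at $x_1$ (and symmetrically) together with the Pl\"{u}cker relation to rule out extra components. This is precisely the ``delicate bookkeeping'' you flag at the end, and it is not done. Second, the proposition asserts $V(I)$ is \emph{geometrically reduced}, i.e.\ that the ideal $I$ (generated by $\widetilde{\Delta},\delta_1,\delta_2$) is radical over $\overline{K}$ and equals the intersection of the prime ideals of the listed components. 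This is a statement about $I$ itself, not just about the support of $V(I)$, and it is exactly the part that a set-theoretic argument cannot reach. Your sketch never addresses it, and it is also exactly the part the paper delegates to MAGMA. So while your write-up is a valuable conceptual companion to the statement, it ultimately rests on the same computer verification that the paper uses; be explicit about which claims are established by the hand argument and which are not.
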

\begin{proof}
	Using Gr\"{o}bner bases we obtain a decomposition of the scheme $\mathcal{C}_{fin}$ into the union of schemes defined above. The details of that step and the claim that $\mathcal{C}_{fin}$ is geometrically reduced were checked by an algorithm performed in MAGMA \cite{Magma_computations}.
\end{proof}
\begin{remark}
	Note that $\mathcal{C}_{fin}$ parametrizes polynomials $P$ and $Q$ for which $\widetilde{\Delta}$ is singular over $\overline{K}$ (together with a singular point).
	It follows from the proposition that $\widetilde{\Delta}$ is singular if and only if $P$ and $Q$ have a common root or if the resultant (with respect to $x_2$) of the polynomials defining $\mathcal{S}$ is equal to zero.
\end{remark}

The curve \(\tilde{\Delta}(x_1,x_2)=0\) has degree $4$ when $\mu_{2,3}\neq 0$. In the projective closure it has two singular points  $$\infty_{1}=[x_1:x_2:z]=[1:0:0] \textrm{ and }\infty_{2}=[x_1:x_2:z]=[0:1:0]$$ in $\mathbb{P}^{2}$. When $\mu_{2,3}=0$, the degree of \(\tilde{\Delta}(x_1,x_2)=0\) is smaller or equal to $3$ and the points $\infty_1,\infty_2$ are not necessarily singular.

For a polynomial $f$ in several variables, the initial form of $f$ is the homogeneous polynomial $f_s$ of least degree and such that $f=f_s+f_{s+1}+\ldots$ where each $f_i$ is homogeneous of degree $i$.
\begin{definition}
	Let $X$ be an affine variety defined in the polynomial ring $R$ by the ideal $I$ and let  $x$ be a closed point on $X$. Suppose that the coordinates of the point $x$ all vanish. We define the tangent cone of $X$ at $x$ to be a variety defined by the ideal of initial forms of generators of~$I$.
\end{definition}

The points \(\infty_{i}\) have their tangent cones (under \(\mu_{2,3}\neq 0\)) isomorphic to a union of two lines 
\begin{equation}\label{eq:tangent_cone_inf}
X^2-(\mu_{1,3}^2-4\mu_{0,3}\mu_{2,3})Y^2=0.
\end{equation}
The slopes of lines are generically distinct except when the characteristic of $K$ equals $2$. We define the following as two further closed subschemes of $\textrm{Spec}(R)$:  $$\mathcal{P}_1=\textrm{Spec}(R)/(\mu_{1,3}^2-4\mu_{0,3}\mu_{2,3}) \textrm{ and } \mathcal{P}_{2}=\textrm{Spec}(R)/(\mu_{2,3})$$ which contains the scheme $\mathcal{D}$.  Let $\mathcal{C}_{inf}$ denote the union $\mathcal{P}_{1}\cup\mathcal{P}_{2}$.

Let $\varrho:\textrm{Spec}(R)\rightarrow \textrm{Spec}(K[a_0,\ldots,b_3])$ be the natural projection map. 
\begin{definition}\label{def:K_generic_tuple}
	A pair $(\tilde{\Delta},\tilde{C})$ is $K$-generic if the corresponding tuple $$(a_0,a_1,a_2,a_3,b_0,b_1,b_2,b_3)$$ does not belong to the set $$\varrho(\mathcal{C}_{fin}\cup \mathcal{C}_{inf})(K).$$ 
\end{definition}

\begin{remark}\label{rem:genericity}
	In other words, a pair $(\tilde{\Delta},\tilde{C})$ is $K$-generic if and only if $\tilde{\Delta}$ is a nonsingular curve of degree $4$ for which the tangent cones of two additional singular points in the projective closure are unions of two distinct lines. 
\end{remark}

A smooth model $\overline{C}$ of the curve $\tilde{C}$ has two (typically) singular models with which we work:
\begin{equation} \label{eq:CC1}
	\tilde{C}:\widetilde{\Delta}(x_1,x_2)=0,\quad P(x_1)P(x_2)=y^2\quad \subset \mathbb{A}^{3},
\end{equation}
\begin{equation}\label{eq:CC2}
	\tilde{C}':\widetilde{\Delta}(x_1,x_2)=0,\quad Q(x_1)Q(x_2)=y^2\quad \subset \mathbb{A}^{3}.
\end{equation} 
The first model is convenient to work with when $a_3\neq 0$ and the latter when $b_3\neq 0$. 
The naive projective closure of each model may contain additional lines contained in the hyperplane at infinity. 
From this point on, for any geometrically reduced and irreducible curve $X$, we denote by $X_{cl}$ the unique irreducible component of its projective closure $X^{cl}$ that equals $X$ in the natural affine chart.

\subsection{Generic is equivalent to typical}
In what follows we will focus on the case when $a_3\neq 0$ since the other case $b_3\neq 0$ provides completely symmetric results (with $P$ being replaced by $Q$ accordingly).

We start by proving properties of the curve $\tilde{\Delta}$ announced in Remark \ref{rem:genericity}.  
\begin{proposition}\label{prop:Delta_tilde_irr}
Let $(\widetilde{\Delta},\widetilde{C})$ be $K$-generic. A curve $\widetilde{\Delta}$ is geometrically irreducible and geometrically reduced. 
\end{proposition}
\begin{proof}
By the genericity assumption, the curve $\widetilde{\Delta}$ is geometrically reduced. The~equation of $\widetilde{\Delta}$ is 
$$\sum_{i=1}^{3}\mu_{i,0}x_2^{i-1} + (\mu_{2,0}+(\mu_{3,0}+\mu_{2,1})x_2+\mu_{3,1}x_2^2)x_1 + (\sum_{i=0}^{2}\mu_{3,i} x_2^i) x_1^2=0.$$
In particular, since $\mu_{2,3}\neq 0$, the curve $\widetilde{\Delta}$ has degree $4$ and its projective closure $\widetilde{\Delta}_{cl}$ has exactly two closed points at infinity $[x_1:x_2:z] = [1:0:0]$ and $[0:1:0]$ which are nodes, cf. \eqref{eq:tangent_cone_inf}. 

If $\widetilde{\Delta}$ were not irreducible, then the polynomial $\widetilde{\Delta}(x_1,x_2)$ would factor over $\overline{K}[x_1,x_2]$. Each factor represents a curve and is of degree at most $3$, hence by the Pl\"{u}cker formula its genus is at most $1$. 
According to the Pl\"{u}cker formula, nodes contribute $-1$ to the geometric genus, so none of the factors can contain both branches along the singular points at infinity.
Suppose $\widetilde{\Delta}(x_1,x_2)$ factors over $\overline{K}[x_1,x_2]$ into an irreducible degree $3$ factor $\delta_{1}$ and a linear factor $\delta_{2}$. 
Suppose that the cubic $\delta_1=0$ is a singular curve. Then it must contain both branches around one of the singularities and the curve $\delta_2=0$ intersects $\delta_1=0$ at the second point at infinity transversally. It follows from the Bezout theorem that the intersection of $\delta_1=0$ with $\delta_2=0$ has degree $3$ and there is yet another transversal point of intersection at the affine patch $z=1$. Hence, this point would be singular on $\widetilde{\Delta}(x_1,x_2)=0$ which is not possible under the genericity assumptions.

Suppose that the cubic $\delta_1=0$ is smooth. Then both points at infinity belong to the intersection of $\delta_1=0$ with $\delta_2=0$ and that intersection is transversal. It follows again from the Bezout theorem that there is another singular point on $\widetilde{\Delta}(x_1,x_2)=0$ which is a contradiction with the genericity assumptions.

If  $\widetilde{\Delta}(x_1,x_2)$ factors over $\overline{K}[x_1,x_2]$ into two irreducible quadratic factors $q_1$ and $q_2$, then both curves $q_1=0$ and $q_2=0$ are smooth, so they meet transversally at both points at infinity. Bezout theorem implies that there should exist at least one more point of intersection, a contradiction with the genericity assumptions.

A similar argument applies to the cases where $\widetilde{\Delta}(x_1,x_2)$ factors over $\overline{K}[x_1,x_2]$ into irreducible factors of degrees  $(2,1,1)$ and $(1,1,1,1)$.
\end{proof}

\begin{lemma}\label{lem:Delta_sing}
Let $(\widetilde{\Delta},\widetilde{C})$ be $K$-generic.
	The subscheme $\tilde{\Delta}_{cl}$ is a singular projective curve in $\mathbb{P}^{2}$. Its set of $\overline{K}$-singular points consists of $[0:1:0]$ and $[1:0:0]$. For both points the tangent cone over $K$ is $K$-isomorphic to $x^2-d y^2=0$ in $\mathbb{A}^{2}_{x,y}$ where 
	\[d=\mu_{1,3}^2-4\mu_{0,3}\mu_{2,3}.\]
	The genus of the smooth projective curve $\overline{\Delta}$ equals 1.
\end{lemma}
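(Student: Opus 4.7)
The plan is to write down $\tilde{\Delta}_{cl}$ explicitly as a plane quartic, locate its two candidate singular points at infinity, compute their tangent cones, and finally read off the geometric genus from the Pl\"ucker/adjunction formula.

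First, expanding $P(x_1)Q(x_2)-P(x_2)Q(x_1)$ with $\mu_{i,j}=a_ib_j-a_jb_i$ and dividing by $x_1-x_2$ term by term gives the symmetric expression
$$\tilde{\Delta}(x_1,x_2) = -\bigl(\mu_{0,1}+\mu_{0,2}(x_1+x_2)+\mu_{0,3}(x_1^2+x_1x_2+x_2^2)+\mu_{1,2}x_1x_2+\mu_{1,3}x_1x_2(x_1+x_2)+\mu_{2,3}x_1^2x_2^2\bigr).$$
Since $K$-genericity forces $\mu_{2,3}\neq 0$ (via $\mathcal{P}_2\subset\mathcal{C}_{inf}$), the polynomial has total degree $4$ and its leading form $-\mu_{2,3}x_1^2x_2^2$ cuts out exactly the two points $\infty_1=[1:0:0]$ and $\infty_2=[0:1:0]$ on the line at infinity $z=0$ in $\mathbb{P}^2$. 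On the affine chart $\mathbb{A}^2\subset\mathbb{P}^2$, the avoidance of $\varrho(\mathcal{C}_{fin})$ built into the $K$-generic condition rules out any $\overline{K}$-singular point of $\tilde{\Delta}$, so the singular locus of $\tilde{\Delta}_{cl}$ over $\overline{K}$ is contained in $\{\infty_1,\infty_2\}$.

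Next, I would compute the tangent cone at $\infty_1$. Dehomogenizing by $x_1=1$ and collecting the terms of total degree $2$ in $(x_2,z)$ yields, up to the global sign,
$$\mu_{2,3}x_2^2+\mu_{1,3}x_2z+\mu_{0,3}z^2,$$
which has no linear part, so $\infty_1$ is in fact singular and $\tilde{\Delta}_{cl}$ is a singular curve. Because $\mu_{2,3}\in K^\times$ and $\charac K\neq 2$, completing the square presents the tangent cone as $\mu_{2,3}\bigl((x_2+\tfrac{\mu_{1,3}}{2\mu_{2,3}}z)^2-\tfrac{d}{4\mu_{2,3}^2}z^2\bigr)$ with $d=\mu_{1,3}^2-4\mu_{0,3}\mu_{2,3}$. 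After the $K$-linear substitution $u=x_2+\tfrac{\mu_{1,3}}{2\mu_{2,3}}z$, $v=\tfrac{z}{2\mu_{2,3}}$ (and scaling by the unit $\mu_{2,3}$), the tangent cone becomes $u^2-dv^2$ over $K$. The case $\infty_2$ follows identically using the $x_1\leftrightarrow x_2$ symmetry of $\tilde{\Delta}$.

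Finally, $K$-genericity also excludes $\mathcal{P}_1=V(d)$, so $d\ne 0$; over $\overline{K}$ each tangent cone factors into two distinct lines $u=\pm\sqrt{d}\,v$, so both $\infty_1$ and $\infty_2$ are ordinary nodes. Applying the Pl\"ucker/adjunction formula to a plane quartic with exactly two nodes and no other singularities gives geometric genus $\binom{4-1}{2}-2=1$, which is the genus of the smooth model $\overline{\Delta}$. The only point that needs care is interpreting $\varrho(\mathcal{C}_{fin})$ in the definition of $K$-genericity as the scheme-theoretic image, so that avoidance of its $K$-points really excludes affine singularities over $\overline{K}$; beyond this, the argument is a direct local computation.
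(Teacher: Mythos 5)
Your proof is correct and follows essentially the same route as the paper's: the paper likewise observes that genericity excludes affine $\overline{K}$-singularities, reads off the tangent cone $x^2-dy^2$ at each point at infinity, notes $d\neq 0$ forces nodes, and applies Pl\"ucker to a degree-4 curve with two nodes. You are more explicit where the paper merely asserts -- you actually expand $\tilde{\Delta}$ in terms of the $\mu_{i,j}$, extract the leading form $-\mu_{2,3}x_1^2x_2^2$, and carry out the completing-the-square manipulation -- which is a genuine improvement in readability, and your caveat about interpreting $\varrho(\mathcal{C}_{fin})$ as a scheme-theoretic (rather than set-theoretic) image is a legitimate subtlety the paper glosses over.

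One small omission: the Pl\"ucker genus formula $g=\binom{d-1}{2}-\delta$ requires $\tilde{\Delta}_{cl}$ to be irreducible, and you never say why it is. The paper supplies this in Proposition \ref{prop:Delta_tilde_irr}, proved just before the lemma, and its own proof of Lemma \ref{lem:Delta_sing} implicitly invokes it by speaking of ``the Pl\"ucker formula for an irreducible curve.'' You should either cite that proposition or note that a plane quartic with exactly two nodes and no other singularities is automatically irreducible (any two distinct components of degrees $d_1,d_2$ would meet in $d_1d_2\geq 3$ points by Bezout, and under our hypothesis those intersections can only be nodes, forcing at least three singular points).
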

\begin{proof}
	Since $\tilde{\Delta}$ is $K$-generic, it follows that $\tilde{\Delta}$ has no $\overline{K}$-singular points. We check from the definition of the tangent cone that for the points $[0:1:0]$ and $[1:0:0]$ it is a union of two lines $x^2-dy^2$ in the affine plane $\mathbb{A}_{x,y}^{2}$.
	Since by our assumption on $P$ and $Q$ we have $d\neq 0$, it follows that the singular points are ordinary double points (nodal singularity or type $A_{1}$ singularity). The degree of the curve $\tilde{\Delta}_{cl}$ equals $4$. By the Pl\"{u}cker formula for an irreducible curve in $\mathbb{P}^{2}$, the geometric genus of $\tilde{\Delta}_{cl}$ equals $(4-1)(4-2)/2-2 = 1$.
\end{proof}

Next, we study the properties of $\tilde{C}$.
\begin{proposition}\label{prop:D_C_generic_implies_C_irr}
Let $(\widetilde{\Delta},\widetilde{C})$ be $K$-generic. The curve $\widetilde{C}$ is geometrically irreducible and geometrically reduced.
\end{proposition}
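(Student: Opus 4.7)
The plan is to view $\widetilde{C}$ as a double cover of the geometrically integral curve $\widetilde{\Delta}$ (integrality coming from Proposition \ref{prop:Delta_tilde_irr}) and reduce the claim to the single statement that $f(x_1,x_2):=P(x_1)P(x_2)$ is not a square in the function field $\overline{K}(\widetilde{\Delta})$. Indeed, since $\overline{K}[\widetilde{\Delta}]$ is a domain, the coordinate ring $\overline{K}[\widetilde{C}] = \overline{K}[\widetilde{\Delta}][y]/(y^2-f)$ is a free rank-two module over it and embeds into $\overline{K}(\widetilde{\Delta})[y]/(y^2-f)$; the latter is a field precisely when $f$ is not a square in $\overline{K}(\widetilde{\Delta})$, and in that case $\overline{K}[\widetilde{C}]$ is a domain, hence $\widetilde{C}$ is geometrically integral.

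To exhibit a place where $f$ has odd valuation I will pass to the smooth projective model $\overline{\Delta}$ of $\widetilde{\Delta}$ and compute pole orders at the points lying above the singularities at infinity of $\widetilde{\Delta}_{cl}$. By Lemma \ref{lem:Delta_sing} these are ordinary nodes at $[1:0:0]$ and $[0:1:0]$ whose tangent cones split into two distinct lines over $\overline{K}$ (the exclusion of $\mathcal{P}_{1}$ forces $d = \mu_{1,3}^{2}-4\mu_{0,3}\mu_{2,3}\neq 0$), so each node resolves into two points of $\overline{\Delta}$. Working in the affine chart $X_{2}=1$ with coordinates $u=X_{1}$, $v=Z$ (so $x_{1}=u/v$, $x_{2}=1/v$), each branch at $[0:1:0]$ is parametrised as $u=\lambda v+O(v^{2})$ with $v$ a uniformizer, where $\lambda$ is a root of $\mu_{2,3}\lambda^{2}+\mu_{1,3}\lambda+\mu_{0,3}$.

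Substituting into $f$ and expanding yields
\[
f \;=\; \frac{a_{3}\,P(\lambda)}{v^{3}} + O(v^{-2}),
\]
so the pole has order $3$ provided $a_{3}P(\lambda)\neq 0$. Here $a_{3}\neq 0$ is the running assumption; the case $b_{3}\neq 0$ is handled symmetrically via the alternative model $\widetilde{C}'$ and the case $a_{3}=b_{3}=0$ is excluded by $\mathcal{P}_{2}$. For $P(\lambda)\neq 0$, the identity
\[
\mu_{2,3}\lambda^{2}+\mu_{1,3}\lambda+\mu_{0,3} \;=\; b_{3}(a_{2}\lambda^{2}+a_{1}\lambda+a_{0})-a_{3}(b_{2}\lambda^{2}+b_{1}\lambda+b_{0})
\]
shows that a branch slope $\lambda$ with $P(\lambda)=0$ would force $a_{3}Q(\lambda)=0$, making $\lambda$ a common root of $P$ and $Q$ and contradicting the exclusion of the stratum $\mathcal{V}$. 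A completely symmetric argument treats the node at $[1:0:0]$.

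Consequently $f$ acquires a pole of odd order $3$ at each of the four infinite places of $\overline{\Delta}$, so it cannot be a square in $\overline{K}(\overline{\Delta})=\overline{K}(\widetilde{\Delta})$, and the formal reduction of the first paragraph then delivers geometric irreducibility and reducedness of $\widetilde{C}$. The principal technical step is precisely this local Puiseux-style expansion at the two nodes at infinity together with the verification that the tangent slopes $\lambda$ are not roots of $P$; handling the tangent-cone degeneracies is what the genericity conditions $\mathcal{P}_{1}$, $\mathcal{P}_{2}$, and $\mathcal{V}$ are designed to preclude. As a sanity check, Riemann--Hurwitz applied to $\overline{C}\to\overline{\Delta}$ ramified at the four infinite places gives $2g(\overline{C})-2 = 2(2\cdot 1-2)+4$, i.e.\ $g(\overline{C})=3$, in agreement with the typical-pair condition stated in the introduction.
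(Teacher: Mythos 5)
Your proof is correct, and it takes a genuinely different route from the paper's. You reduce the statement to the clean function-field criterion: $\widetilde{C}$ is the double cover $y^{2}=f$ of the geometrically integral $\widetilde{\Delta}$, with $f=P(x_{1})P(x_{2})$, and it is geometrically integral iff $f$ is not a square in $\overline{K}(\widetilde{\Delta})$; you then exhibit an odd valuation for $f$ at a place of $\overline{\Delta}$ above a node at infinity. The Puiseux-style expansion in the chart $x_{1}=u/v$, $x_{2}=1/v$ gives $f=a_{3}P(\lambda)v^{-3}+O(v^{-2})$ along a branch of slope $\lambda$, and your identity $\mu_{2,3}\lambda^{2}+\mu_{1,3}\lambda+\mu_{0,3}=b_{3}(a_{2}\lambda^{2}+a_{1}\lambda+a_{0})-a_{3}(b_{2}\lambda^{2}+b_{1}\lambda+b_{0})$ together with $P(\lambda)=0$ forces $a_{3}Q(\lambda)=0$, contradicting the exclusion of $\mathcal{V}$; the excluded strata $\mathcal{P}_{1}$, $\mathcal{P}_{2}$ guarantee the expansion makes sense (two finite, distinct branch slopes). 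That settles the pole order $3$, and a square in a function field has even valuation everywhere, so you are done. The paper instead works with the second planar model $B$ under $\pi_{2}$, proves $\pi_{2}|_{\widetilde{C}}$ birational by writing down an explicit inverse, and then rules out degree-$1$ and degree-$2$ (in $y$) factors of $B(x,y)=A+\widetilde{S}y^{2}+T^{3}y^{4}$ by a case analysis using $\mathrm{Res}_{x}(\widetilde{S},T)=-\mu_{2,3}^{3}\,\mathrm{Res}_{x}(P,Q)^{2}$ and the genericity assumptions. The trade-off: the paper's computation is messier but produces the explicit birational plane model $B$, the formulas for $A$, $\widetilde{S}$, $T$, and the inverse map $\pi_{2}^{-1}$, all of which are recycled heavily in Lemma \ref{lem:C_sing} for the subsequent singularity and ramification analysis; your proof is shorter, more conceptual, and transparently explains why the genericity conditions $\mathcal{P}_{1}$, $\mathcal{P}_{2}$, $\mathcal{V}$ are exactly what is needed, but it does not by itself yield the explicit planar model the paper uses downstream. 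As you note, the local expansion you perform at the nodes at infinity is the same computation the paper anyway carries out (in a more involved form) when proving Lemma \ref{lem:C_sing}(d), so your approach effectively front-loads that work and gets irreducibility for free from it.
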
	
\begin{proof}
	The curve $\widetilde{C}$ is geometrically reduced by the genericity assumption. 
Assume that $\widetilde{C}$ is not geometrically irreducible. Hence $\widetilde{C} = \sum_{i} C_i$ is a union of irreducible schemes $C_i$ defined over $\overline{K}$ and such that $C_i\nsubseteq C_j$ for any $i\neq j$. 

We consider two projections: $\pi_1,\pi_2:\mathbb{A}^3\rightarrow \mathbb{A}^2$, the first $\pi_1(x_1,x_2,y)=(x_1,x_2)$ and the second $\pi_2(x_1,x_2,y) = (x_1,y)$. The image of $\widetilde{C}$ under the first projection equals $\widetilde{\Delta}$ and the image under the second projection is a curve $B$ which has the following equation

\[B: AP(x_1)^4+\tilde{S}(x_1)P(x_1)^2 y^2+T(x_1)^3 y^4=0\]
where $A,\tilde{S}(x_1) $ and $T(x_1)$ are explicitly determined:

\begin{equation}\label{eq:A_formula}
A=Res_{x}(P(x),Q(x))=-\mu _{0,3}^3+\mu _{1,2} \mu _{0,3}^2-\mu _{0,1} \mu _{1,3}^2+\left(\mu _{0,1} \left(3 \mu _{0,3}+\mu _{1,2}\right)-\mu _{0,2}^2\right) \mu _{2,3},\end{equation}

\[\tilde{S}(x) =s_0+s_1 x+s_2 x^2+s_3 x^3,\]
\begin{align*}
s_0&=\left(2 \mu _{0,3}-\mu _{1,2}\right) \mu _{0,3}^2+\left(\mu _{0,2}^2-3 \mu _{0,1} \mu _{0,3}\right) \mu _{2,3},\\
s_1&=3 \mu _{1,3} \mu _{0,3}^2-\mu _{0,1} \mu _{1,3} \mu _{2,3}-2 \mu _{0,2} \left(\mu _{1,3}^2-\mu _{1,2} \mu _{2,3}\right),\\
s_2&=-\mu _{1,2} \mu _{1,3}^2-4 \mu _{0,1} \mu _{2,3}^2+\left(3 \mu _{0,3}^2+\mu _{1,2}^2+\mu _{0,2} \mu _{1,3}\right) \mu _{2,3},\\
s_3&=-\mu _{1,3}^3+\mu _{2,3} \left(\mu _{1,2}-3 \mu _{3,0}\right) \mu _{1,3}-2 \mu _{0,2} \mu _{2,3}^2,
\end{align*}
 
 \[T(x)=-\mu_{0,3}-\mu_{1,3}x-\mu_{2,3}x^2,\]
 
 \begin{equation}\label{eq:resultant_ST_PQ}
 Res_{x}(\tilde{S}(x),T(x)) = -\mu_{2,3}^3\cdot Res_{x}(P(x),Q(x))^2.
 \end{equation}

In fact, the map $\pi_2\mid_{\tilde{C}}$ has degree $1$ and $B$ is birational to $\tilde{C}$. The inverse map $\pi_2^{-1}\mid _B$ is given~by
\[\pi_2^{-1}\mid _B(x_1,y)=(x_1,g(x_1,y),y)\]
where
\[g(x_1,y)=\frac{1}{ \gamma x_1+\delta}\left(\left(\frac{T(x_1)y}{P(x_1)} \right)^2+\alpha x_1+\beta\right)\]
where $$\alpha=-\mu_{0, 3}\mu_{1, 3} + \mu_{0, 2}\mu_{2, 3}, \quad \beta=-\mu_{0, 3}^2 + \mu_{0, 1}\mu_{2, 3},$$ $$\quad \gamma=\mu_{1, 3}^2 - \mu_{2, 3}(\mu_{0, 3} + \mu_{1, 2}) \textrm{ and } \delta=\mu_{0, 3}\mu_{1, 3} - \mu_{0, 2}\mu_{2, 3}.$$
This map is well-defined. In fact, if $\delta=0=\gamma$, it follows that $A=-2\mu_{0,3}(\mu_{0,3}^2-\mu_{0,1}\mu_{2,3})$, since $\mu_{2,3}\neq 0$. It follows from $\gamma=0$ that $b_0 = \frac{1}{a_3 \mu_{2,3}} (*)$ where $(*)$ is a certain polynomial expression in the variables $a_i$ and $b_j$. Hence 
$$A\mid_{b_0 =  \frac{1}{a_3 \mu_{2,3}} (*)} = \frac{\mu_{0,3}\mu_{1,3}}{\mu_{2,3}}\delta\mid_{b_0 =  \frac{1}{a_3 \mu_{2,3}} (*)}=0.$$
Since $A=Res_{x}(P(x),Q(x))$, the vanishing of the resultant of $P(x)$ and $Q(x)$ would imply that these polynomials have a common root, a contradiction with the $K$-genericity of $(\widetilde{\Delta},\widetilde{C})$.
 
Since $\widetilde{\Delta}$ is geometrically irreducible, then for every $i$ we have $\pi_1(C_i) = \widetilde{\Delta}$. 
Hence, the components $C_i$ differ only on the $y$-coordinate. Assuming that the curve $B$ is geometrically irreducible, we would show that $\pi_2(C_i) = B$ and so prove that all $C_i$ are equal.

To show that $B$ is geometrically irreducible we study the factorization of the polynomial 
 \begin{equation}\label{eq:B_polynomial}
  B(x,y)=A+\tilde{S}(x) y^2+T(x)^3 y^4   
 \end{equation}
which corresponds to a curve birational to $B$ via the change of variables $y\mapsto y\cdot P(x)$. 
 
Suppose that $B(x,y)$ has a factor $q y+p$ where $p,q\in\overline{K}[x]$, $q\neq 0$ and $p,q$ are coprime. If~$p$~would vanish, then $A=0$, but the polynomials $P,Q$ have no common root under the genericity assumptions.
Suppose that $p\neq 0$. It follows that $H=A q^4+\tilde{S}p^2 q^2+T^3 p^4=0$, hence $\deg p =0$ and $q^2\mid T^3$. A polynomial $T$ is of degree $2$ and separable under our assumptions, hence $q=c(x-r_1)^e(x-r_2)^f$ where $r_1,r_2$ are roots of $T$ and $e,f\leq 1$.
Since $\deg A q^4 = 4\deg q$, $\deg \tilde{S}p^2 q^2 \leq 3+2\deg q$ and $\deg T^3 p^4 = 6$, then $H$ cannot vanish when $0\leq \deg q\leq 2$, a contradiction.

Assume that $B(x,y)$ factors into two quadratic expressions in $y$, $B(x,y)=(\alpha+\beta y+\gamma y^2)(\delta+\epsilon y+\phi y^2)$, where $\alpha,\beta,\gamma,\delta,\epsilon,\phi\in \overline{K}[x]$. Since $A\neq 0$ we can assume without loss of generality that $\alpha = A$ and $\delta =1$. It follows that $\beta+A\epsilon = 0$ and we have two separate cases:

\textbullet $\epsilon \neq 0$: In that case $\gamma=A \phi$, so $A\phi^2 = T^3$ and $-A(\epsilon^2-2\phi)=\tilde{S}$. Since~$A\neq 0$ and $T$ is of degree $2$, it follows that $T$ must be a square, hence $\textrm{disc}(T) = \mu_{1,3}^2-4\mu_{0,3}\mu_{2,3}=0$, a contradiction with the genericity assumption.

\textbullet $\epsilon = 0 $: It follows that $\gamma+A\phi = \tilde{S}$ and $\gamma \phi = T^3$. Hence, $\deg \gamma+\deg \phi = 6$ and from the former equation we deduce that in fact $\deg\gamma=\deg\phi = 3$. Notice that $\tilde{S}$ and $T$ cannot have a common root by \eqref{eq:resultant_ST_PQ}. Hence, we found that $B$ is a union of two curves of geometric genus $0$ of the form $A+c(x-r_1)^e(x-r_2)^f y^2=0$ where $c\in\overline{K}$ and $(e,f)\in\{(0,3),(3,0)\}$.  That would imply, that under the mapping $\pi_1:\widetilde{C}\rightarrow\widetilde{\Delta}$ these components would map onto $\widetilde{\Delta}$ which is not possible due to Riemann-Hurwitz formula, Proposition \ref{prop:Delta_tilde_irr} and Lemma \ref{lem:Delta_sing}.
\end{proof}

The following lemma describes the singular points on $\tilde{C}_{cl}$ as well as the fields of the definition of their resolutions.

\begin{lemma}\label{lem:C_sing}
	Let $(\tilde{\Delta},\tilde{C})$ be $K$-generic and let $\psi:\overline{C}\rightarrow \tilde{C}_{cl}$ be the normalization of $\tilde{C}_{cl}$.
	\begin{enumerate}
	\item [a)]If $a_3\neq 0$ and $P$ is separable, then the curve $\tilde{C}$ has six $\overline{K}$-rational nodes at the points $(x_1,x_2,0)$ such that $P(x_1)=P(x_2)=0$ and $x_1\neq x_2$. 
	If $a_3=0$, $b_3\neq 0$ and $Q$ is separable, then the curve $\tilde{C}'$ has six $\overline{K}$-rational nodes at the points $(x_1,x_2,0)$ with $Q(x_1)=Q(x_2)=0$ and $x_1\neq x_2$. The field of definition of every preimage through $\psi$ of a singular point $(x_1,x_2)$ with $P(x_1)=0=P(x_2)$ is $K(x_1,x_2,\sqrt{Q(x_1)Q(x_2)})$.
	
	\item[b)]If $a_3\neq 0$ and $P$ is not separable, then $P(x)=c(x-a)^2(x-b)$, $a\neq b$, $a,b,c\in K$ and the curve $\tilde{C}$ has three $\overline{K}$-rational non-nodal singular points at $(a,b)$, $(b,a)$ and at $(a,a)$. Similarly, we have $3$ singular points when $b_3\neq 0$ and $Q$ has a double root. The non-singular points above $(a,b)$ and $(b,a)$ are defined over $K(\sqrt{Q(a)Q(b)})$ and the non-singular points above $(a,b)$ are defined over $K$.
	
	\item[c)]When either $a_3\neq 0$ or $b_3\neq 0$, then the projective curves $\tilde{C}_{cl}$ and $\tilde{C}'_{cl}$ have an additional $\overline{K}$-rational singular point $[0:0:1:0]$.		
	
	\item[d)] The normalisation $\overline{C}$ has genus $3$ and the rational degree $2$ map $\tilde{\eta}:\tilde{C}\rightarrow\tilde{\Delta}$ extends to a degree $2$ morphism $\eta:\overline{C}\rightarrow\overline{\Delta}$ which is ramified only at the four points $\infty_{1,\pm}$, $\infty_{2,\pm}$ which in the normalisation $\overline{\Delta}\rightarrow\tilde{\Delta}_{cl}$ map to $\infty_1$, $\infty_2$, respectively. The preimage $\eta^{-1}(\{\infty_{1,\pm},\infty_{2,\pm}\})$ maps through $\psi$ to the point $[0:0:1:0]$. The field of definition of the points $\infty_{i,\pm}$ and of their preimages is the field $K(\sqrt{d})$ where $d=\mu_{1,3}^2-4\mu_{0,3}\mu_{2,3}$.
	\end{enumerate}
\end{lemma}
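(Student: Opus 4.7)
The plan is to handle parts (a)--(c) by explicit local analysis (Jacobian criterion plus tangent-cone computation), and then deduce (d) by combining the singularity data with Riemann--Hurwitz applied to the double cover $\tilde{\eta}:\tilde{C}\to\tilde{\Delta}$, $(x_1,x_2,y)\mapsto(x_1,x_2)$, whose deck transformation is $y\mapsto -y$.

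For (a), since $\tilde{\Delta}$ is smooth on the affine chart by Lemma \ref{lem:Delta_sing}, a singularity of $\tilde{C}$ at $(x_1,x_2,y)$ forces $y=0$ and simultaneously forces the gradient of $P(x_1)P(x_2)-y^2$ to be parallel to that of $\tilde{\Delta}$. Using the identity $\Delta=(x_1-x_2)\tilde{\Delta}$, differentiation at a point where $P(x_1)=P(x_2)=0$ and $x_1\neq x_2$ yields $\partial_1\tilde{\Delta}=P'(x_1)Q(x_2)/(x_1-x_2)$ and $\partial_2\tilde{\Delta}=-P'(x_2)Q(x_1)/(x_1-x_2)$. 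When $P$ is separable there are exactly six such ordered pairs, and substituting the tangent direction of $\tilde{\Delta}$ into $y^2=P(x_1)P(x_2)$ produces the tangent cone
\[
y^2 \;=\; \frac{P'(x_1)^2\,Q(x_2)}{Q(x_1)}\,t^2,
\]
which splits as a union of two distinct lines over $K(x_1,x_2,\sqrt{Q(x_1)Q(x_2)})$, exhibiting both the node and the stated field of definition of its two branches.

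For (b) I repeat the Jacobian calculation in the presence of a double root $a$: now $P'(a)=0$ removes some of the vanishing conditions, and the singular locus consists of $(a,b)$, $(b,a)$, and $(a,a)$ (the last of these lies on $\tilde{\Delta}$ because the higher-order vanishing of $P$ forces $\tilde{\Delta}(a,a)=0$, as in the proof of Proposition 3.4). The singularities are no longer nodes because $P(x_1)\approx c(a-b)(x_1-a)^2$ contributes multiplicity two, but a direct local parametrization in $t=x_1-a$ exhibits two analytic branches at $(a,b)$ distinguished by $\pm\sqrt{Q(a)Q(b)}$, and a single branch at $(a,a)$ already rational over $K$. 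For (c) I pass to the naive projective closure $\tilde{C}_{cl}\subset\mathbb{P}^3$ in coordinates $[x_1:x_2:y:z]$ and intersect with $z=0$: the homogenized equation $P_h(x_1,z)P_h(x_2,z)=y^2z^4$ degenerates to $a_3^2 x_1^3 x_2^3=0$, and combined with the leading form of $\tilde{\Delta}$ the only common zero is $[0:0:1:0]$, at which local inspection confirms a singularity.

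For (d), the involution $y\mapsto -y$ makes $\eta:\overline{C}\to\overline{\Delta}$ a degree-$2$ cover. Its branch locus on $\tilde{\Delta}$ in the affine chart equals $\{P(x_1)P(x_2)=0\}\cap\tilde{\Delta}$, which by (a) is precisely the six nodes of $\tilde{C}$; but each such node resolves in $\overline{C}$ into two points swapped by $y\mapsto -y$, so $\eta$ is \emph{unramified} there. All ramification is therefore concentrated over $[0:0:1:0]$, whose resolution maps via $\eta$ to the four points $\infty_{1,\pm},\infty_{2,\pm}$ arising from resolving the nodes $\infty_1,\infty_2$ of $\tilde{\Delta}_{cl}$ (Lemma \ref{lem:Delta_sing}). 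A local calculation at each of the two nodes at infinity shows that the double cover restricted there is totally ramified, giving four ramification points, each defined over $K(\sqrt{d})$ since $\{\infty_{1,\pm},\infty_{2,\pm}\}$ is. Riemann--Hurwitz then yields $2g(\overline{C})-2=2(2g(\overline{\Delta})-2)+4=4$, hence $g(\overline{C})=3$. The main obstacle is the non-separable case (b): the tangent cones have multiplicity $>2$ and the combinatorics of the resolution must be traced carefully via explicit local parametrizations to identify both the number of analytic branches at each singularity and their fields of definition.
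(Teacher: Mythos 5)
Parts (a)--(c) are essentially sound and track the paper's approach: the tangent-cone computation in (a), giving $y^2 = P'(x_1)^2 Q(x_2)/Q(x_1)\,t^2$, is correct and equivalent to the paper's statement that the tangent cone is $K$-isomorphic to $x^2-dy^2$ with $d=Q(x_1)Q(x_2)$; and the identification of $[0:0:1:0]$ as the only point at infinity is right. However, you have a small error in (b): the point $(a,a,0)$ does \emph{not} contribute a single branch. The tangent cone there is a double line (a tacnodal configuration), and after the birational change of model $(x_1,x_2,y)\mapsto(x_1,x_2,y/((x_1-a)(x_2-a)))$ used in the paper one sees two distinct smooth points $(a,a,\pm(a-b))$ lying above $(a,a)$, both defined over $K$. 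The statement in the lemma ("the non-singular points above $(a,a)$ are defined over $K$," plural) refers to these two.

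The serious gap is in (d), which is where the paper concentrates almost all of its effort. You assert (i) that the normalization of $[0:0:1:0]$ consists of exactly four points, (ii) that these map under $\eta$ to $\infty_{1,\pm},\infty_{2,\pm}$, (iii) that $\eta$ is ramified of index $2$ at each, and (iv) that they are defined over $K(\sqrt{d})$. None of these are established: you say "a local calculation ... shows" but the calculation is the content of Claims 1--5 in the paper's proof, which passes to the planar model $C_{pl}$ and tracks two or three consecutive blow-ups (nodal and cuspidal) in both the cases $\gamma\neq0$ and $\gamma=0$, explicitly matching the exceptional points of $\overline{C}$ with the exceptional points of $\overline{\Delta}$ above $\infty_1,\infty_2$. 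Moreover, claim (iv) cannot be deduced merely from the fact that $\{\infty_{1,\pm},\infty_{2,\pm}\}$ is defined over $K(\sqrt{d})$: a priori the preimage under a degree-$2$ cover could acquire an extra quadratic extension, so one must actually verify (as the paper does) that the tangent cones appearing at each stage of the blow-up split over $K(\sqrt{d})$. Finally, you flag the non-separable case (b) as "the main obstacle"; in fact (b) is handled routinely by a birational change of model, and the real obstacle is the resolution at $[0:0:1:0]$, which is the same in both cases. Without the blow-up analysis, the Riemann--Hurwitz conclusion $g(\overline{C})=3$ and the field-of-definition statement are unsupported.
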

\begin{proof}
	A rational map $\tilde{\eta}$ is clearly dominant, hence it induces an injection of the function fields $\overline{K}(\tilde{\Delta})\subset \overline{K}(\tilde{C})$ and hence we obtain a dominant map $\eta:\overline{C}\rightarrow\overline{\Delta}$ which agrees on some open subset with $\tilde{\eta}$, cf. \cite[Cor. I.6.12]{Hartshorne}.
	
	By \cite[Prop. II.6.8]{Hartshorne} it follows that $\eta$ is surjective, i.e., $\eta(\overline{C})=\overline{\Delta}$. The~restriction of $\tilde{\eta}$ to the non-singular open subset $U\subset \tilde{C}$ is unramified at each closed point. This follows from a direct computation. 
	
	Assume without loss of generality that $a_3\neq 0$. 
	
	Suppose that $P$ is separable.
	We then work with the singular model $\tilde{C}$ defined in \eqref{eq:CC1} and its projective closure $\tilde{C}_{cl}$. The affine patch $\tilde{C}$ has exactly six singular nodal points and the tangent cone at the point $(x_1,x_2,0)$ is $K$-isomorphic to $x^2-d y^2$ for $d=Q(x_1)Q(x_2)$. Point $(x_1,x_2,0)$ maps to $(x_1,x_2)$ which by Lemma \ref{lem:Delta_sing} is non-singular. Hence the map $\eta$ is unramified above these points since the degree of $\eta$ is $2$.
	
	Suppose that $P$ is not separable. If $P$ had a triple root, then curve $\tilde{\Delta}$ would have an additional singular point, hence its geometric genus would not be $1$.
	Suppose that $P$ has a double root. The extended Euclidean algorithm implies that the greatest common divisor of $P$ and $P'$ is polynomial of degree $1$ defined over $K$. It~follows that $P(x)=c(x-a)^2(x-b)$ where $a,b,c\in K$. Then $\tilde{C}$ has the model 
    \[\widetilde{\Delta}(x_1,x_2)=0,\ c^2(x_1-a)^2(x_2-a)^2 (x_1-b)(x_2-b)=y^2.\]
    The~birational map $(x_1,x_2,y)\mapsto (x_1,x_2,y/((x_1-a)(x_2-a)))$ changes the model $\tilde{C}$ to a new model which has only two nodal singularities at the points $(a,b,0)$ and $(b,a,0)$ with a tangent cone $K$-isomorphic to $x^2-d y^2$ for $d=Q(x_1)Q(x_2)$. Above the point $(a,a)$ on $\tilde{\Delta}$ we have two smooth points $(a,a,\pm(a-b))$. 
	
	Now, without additional assumption on the separability of $P$, we consider the ramification above the points $\infty_{1,\pm}$ and $\infty_{2,\pm}$ which map to $\infty_1=[1:0:0]$ and $\infty_2=[0:1:0]$ in the normalization map $\psi$. 

Finally, we will prove that the non-singular model $\overline{C}$ has four points $\infty_{C,1,\pm}$, $\infty_{C,2,\pm}$ which are obtained in the resolution of the singular point $[0:0:1:0]$ of $\widetilde{C}$. We are using the notations from the proof of Proposition \ref{prop:D_C_generic_implies_C_irr}.  The morphism $\eta$ maps $\infty_{C,i,\pm}$ onto $\infty_{i,\pm}$ and all of these points are defined over $K(\sqrt{d})$. Since the singularity $[0:0:1:0]$ is rather complicated we resolve the singularities in the birationally equivalent planar model $C_{pl}$ of $\tilde{C}$. Let $\kappa:C_{pl}\rightarrow \widetilde{C}$ be given by

\[\kappa(x_1,y)=\left(x_1,g\left(x_1,\frac{y P(x_1)}{T(x_1)}\right),\frac{yP(x_1)}{T(x_1)}\right)\]
where
\[C_{pl}:AT(x_1)^2 + \tilde{S}(x_1)y^2 + T(x_1)y^4=0.\]
In the projective model
\[A\widetilde{T}(x_1,z)^2 z^2 + \widetilde{S}(x_1,z)y^2 z + \widetilde{T}(x_1,z)y^4=0\]
the mapping is
\begin{equation}\label{eq:phi_map}
\kappa(x_1:y:z)=\left(x_1 ( \gamma x_1+\delta z)z:\left(y^2+\alpha x_1 z+\beta z^2\right)z:\frac{y \widetilde{P}(x_1,z)}{\widetilde{T}(x_1,z)} ( \gamma x_1+\delta z):z^2 ( \gamma x_1+\delta z)\right).
\end{equation}

We consider the points $[1:0:0]$, $ [0:1:0]$ or $[\alpha:0:1]$ for $T(\alpha)=0$ of $C_{pl}$ since only these points, or their blowups, can potentially map to $[0:0:1:0]$ on $\widetilde{C}$.
We will compute the image of these points under $\kappa$ and their composition with $\eta$.

\textbf{Claim 1:} The point $[0:1:0]$ on $C_{pl}$ is a node with tangents defined over $K(\sqrt{d})$.
The tangent cone at $[0:1:0]$ is $\widetilde{T}(x_1,z)=0$. The discriminant of $T$ equals $d$, so the slopes split over $K(\sqrt{d})$.

Since $y=1$ it is clear that the tangent cone has equation $\widetilde{T}(x_1,z)=0$. Due to our assumptions on $\widetilde{C}$ the discriminant of $T$ is non-zero, hence the singularity is an ordinary double point (a node).

\textbf{Claim 2:} Consider the blowup map $Bl_{[0:1:0]}: C_{pl}^{[0:1:0]}\rightarrow C_{pl}$. The preimage of $[0:1:0]$ contains two points defined over $K(\sqrt{d})$. Each of these two points maps via composition with $\kappa$ to $[0:0:1:0]$. Moreover, the composition with $\eta$ maps these points to the preimages of the blowup at $[0:1:0]$ of $\widetilde{\Delta}$.
In local coordinates we have $x_1=tz$, hence 
\[\kappa(t z:1:z)=\left(tz^2( \gamma tz+\delta z):\left(1+\alpha tz^2+\beta z^2\right)z:\frac{ \widetilde{P}(tz,z)}{\widetilde{T}(tz,z)} ( \gamma tz+\delta z):z^2 ( \gamma tz+\delta z)\right).\]

But $\widetilde{T}(tz,z) = T(t) z^2$ and the equation of $C_{pl}^{[0:1:0]}$ in the coordinates $(t,z)$ is $T(t)+z^2 F(t,z)=0$, hence $\widetilde{T}(tz,z)=-z^4 F(t,z)$. The points above $[0:1:0]$ in the blowup are given by $(t,z)$ where $z=0$ and $t$ satisfies $T(t)=0$. 
Observe that $Res_{t}(P(t),T(t)) = a_3^2 Res_{t}(P(t),Q(t))$, hence $P(t)$ does not vanish over $T(t)=0$. Moreover, $Res_{t}(T(t),\gamma t+\delta) = \mu_{2,3}^2 Res_{t}(P(t),Q(t))$, so $\gamma t+\delta$ does not vanish when $T(t)=0$.  Therefore, this demonstrates that the blowup points are mapped precisely to $[0:0:1:0]$.
The composition of $\kappa(tz:1:z)$ with $\eta$ results in the mapping
\[\eta(\kappa(t z:1:z))=\left(tz ( \gamma tz+\delta z)z\widetilde{T}(tz,z):\left(1+\alpha tz^2+\beta z^2\right)z\widetilde{T}(tz,z):z^2 ( \gamma tz+\delta z)\widetilde{T}(tz,z)\right)\]
which simplifies to
\[\eta(\kappa(t z:1:z))=\left(tz ( \gamma tz+\delta z):\left(1+\alpha tz^2+\beta z^2\right):z ( \gamma tz+\delta z)\right).\]
So the image of the blown-up points under composition with $\eta$ is $[0:1:0]$. Finally, blowing up the point $[0:1:0]$ on $\widetilde{\Delta}$ we see that the blown-up points $(t,0)$ with $T(t)=0$ map to points $(t,0)$ of the blowup of $\widetilde{\Delta}$.

\textbf{Claim 3:} Let $\gamma\neq 0$. There is only one point in blowup of $[1:0:0]$ on $C_{pl}$. The image of this point under $\kappa$ is $[0:0:1:0]$. The blowup point is a node. The points obtained after the second blowup map under composition with $\eta$ to the points in the blowup of $[1:0:0]$ of $\widetilde{\Delta}$.

We have that


\[\kappa(1:y:yt)=\left( ( \gamma +\delta yt)t:\left(y^2+\alpha yt+\beta (yt)^2\right)t:\frac{\widetilde{P}(1,yt)}{\widetilde{T}(1,yt)} ( \gamma +\delta yt):t^2 ( \gamma +\delta yt)\right)\]
with the equation of $C_{pl}$ in $(y,t)$ of the form
\[A\widetilde{T}(1,yt)^2 t^2 + \widetilde{S}(1,yt)y t + \widetilde{T}(1,yt)y^2=0.\]
So for $y=0$ we have $t=0$ since $A\neq 0$ and $\widetilde{T}(1,0)\neq 0$. Hence, the point in the blowup maps to $[0:0:\frac{a_3 \gamma}{-\mu_{2,3}}:0]$ which equals $[0:0:1:0]$ under our assumptions.

The blowup point is a node with tangent cone $A\mu_{2,3}^2 t^2+s_3 y t-\mu_{2,3} y^2=0$ and discriminant of the quadric equal to $\gamma^2 d$, hence the slopes are defined over $K(\sqrt{d})$.
Composition with $\eta$ maps the point $(y,t)$ to
\[\eta(\kappa(1:y:yt))=\left( ( \gamma +\delta yt):\left(y^2+\alpha yt+\beta (yt)^2\right):t ( \gamma +\delta yt)\right).\]
In particular, the point $(y,t)=(0,0)$ maps to $[1:0:0]$.

Blowup of $(y,t)=(0,0)$ gives the new equation in $y=y_1, t=y_1 t_1$ which is
\[A\widetilde{T}(1,y_1^2 t_1)^2 t_1^2 + \widetilde{S}(1,y_1^2 t_1) t_1 + \widetilde{T}(1,y_1^2 t_1)=0.\]
For $y_1=0$ we have a quadratic equation for $t_1$
\[A \mu_{2,3}^2 t_1^2+s_3 t_1-\mu_{2,3}=0.\]

Hence, the points above $[1:0:0]$ in the blowup of $\widetilde{\Delta}$ are in correspondence with the points $(0,t_1)$.

\textbf{Claim 4:} Let $\gamma=0$. There is only one point $\mathcal{Q}_{1}$ in the blowup of $[1:0:0]$ on $C_{pl}$. The point $\mathcal{Q}_{1}$ is singular non-ordinary and it maps under $\kappa$ to $[0:0:1:0]$. The blowup at $\mathcal{Q}_{1}$ has one preimage $\mathcal{Q}_{2}$ which is again singular non-ordinary. The composition of $\kappa$ with $\eta$ maps $\mathcal{Q}_{2}$ to $[1:0:0]$ on $\widetilde{\Delta}$. The next blowup at $\mathcal{Q}_{2}$ produces a single singular ordinary point $\mathcal{Q}_{3}$ which has tangent cone which splits over $K(\sqrt{d})$. The points in the blowup of $\mathcal{Q}_{3}$ map under the composition with $\kappa$ and $\eta$ to the blowup points above $[1:0:0]$ on $\widetilde{\Delta}$.

We argue like in the proof of Proposition \ref{prop:D_C_generic_implies_C_irr} to show that if $\gamma=0$, then $\delta\neq 0$. Let $z=yt$, then


\[\kappa(1:y:yt)=\left(  \delta t:\left(y+\alpha t+\beta yt^2\right):\frac{\widetilde{P}(1,yt)}{\widetilde{T}(1,yt)} \delta :\delta t^2  \right)\]
so the image of the blowup point is $[0:0:\frac{a_3 \delta}{-\mu_{2,3}}:0]$ which equals $[0:0:1:0]$ under our assumptions.
For $y=y_1$ and $t=y_1 t_1$ we obtain
\[\kappa(1:y_1:y_1^2 t_1)=\left(  \delta y_1 t_1:\left(y_1+\alpha y_1 t_1+\beta y_1(y_1 t_1)^2\right):\frac{\widetilde{P}(1,y_1^2 t_1)}{\widetilde{T}(1,y_1^2 t_1)} \delta :\delta (y_1 t_1)^2  \right)\]

thus

\[\eta(\kappa(1:y_1:y_1^2 t_1))=\left(  \delta t_1:1+\alpha  t_1+\beta (y_1 t_1)^2:\delta y_1  t_1^2  \right).\]

The composition with $\eta$ is well-defined and maps $y_1=0, t_1= -s_3/(2 A \mu_{2,3}^2)=-1/\alpha$ to the point $(1:0:0)$. The point $(y_1,t_1)=(0,-1/\alpha)$ is a cusp, so we blow it up again with $t_1-(-1/\alpha)=y_2 t_2$, $y_1=y_2$ obtaining


\[\eta(\kappa(1:y_2:y_2^2 (-1/\alpha+ y_2 t_2)))=\left(  \delta (-1/\alpha+ y_2 t_2):\alpha y_2 t_2+\beta (y_2 (-1/\alpha+ y_2 t_2))^2:\delta y_2  (-1/\alpha+ y_2 t_2)^2  \right).\]

Finally, we compute that the tangent cone at $(y_2,t_2)=(0,0)$ has separate slopes which split over $K(\sqrt{d})$ and the blowup points correspond to the blowup points above $[1:0:0]$ of $\widetilde{\Delta}$, similarly to the proof of Claim 3.

\textbf{Claim 5:} The points above $[\alpha:0:1]$ for $T(\alpha)=0$ in the respective blowups map to $[\alpha:*:*:1]$ in $\widetilde{C}$.

We check that each point $[\alpha:0:1]$ is a node and in the local coordinates we have $y=x_1' t$, $x_1=x_1'+\alpha$. The points above $[\alpha:0:1]$ in the blowup map to $[\alpha:*:*:1]$ due to \eqref{eq:phi_map}.
Hence, we conclude that each point $\infty_{i,\pm}$ is ramified of degree $2$ under $\eta:\overline{C}\rightarrow\overline{\Delta}$.
Given that these are the only ramified points, the Riemann-Hurwitz formula directly implies that the genus of $\overline{C}$ is $3$.
\end{proof}
Now we are ready to prove that $K$-typical is equivalent to $K$-generic.
\begin{proposition}\label{prop:typic_imp_generic}
	Suppose that $(\widetilde{\Delta},\widetilde{C})$ is $K$-typical. Then $\mu_{2,3}\neq 0$ and $\mu_{1,3}^2-4\mu_{0,3}\mu_{2,3}\neq 0$. In particular, the pair $(\widetilde{\Delta},\widetilde{C})$ is $K$-generic.
\end{proposition}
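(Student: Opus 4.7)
The plan is to prove both non-vanishing statements by contraposition, using Riemann--Hurwitz for the double cover $\overline{C}\to\overline{\Delta}$, and then to deduce the \emph{in particular} claim via a Pl\"ucker count. The starting point is the explicit expansion
\[
\widetilde{\Delta}(x_1,x_2)=\mu_{1,0}+\mu_{2,0}(x_1+x_2)+\mu_{2,1}x_1x_2+\mu_{3,0}(x_1^2+x_1x_2+x_2^2)+\mu_{3,1}x_1x_2(x_1+x_2)+\mu_{3,2}x_1^2x_2^2,
\]
whose leading total-degree term is $\mu_{3,2}x_1^2x_2^2=-\mu_{2,3}x_1^2x_2^2$; hence $\mu_{2,3}=0$ forces $\deg\widetilde{\Delta}\le 3$.

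For the first claim, if also $\mu_{1,3}=0$ then $\deg\widetilde{\Delta}\le 2$ and $g(\overline{\Delta})\le 0$, contradicting typicality. Otherwise $\widetilde{\Delta}_{cl}$ is a smooth cubic of genus $1$ with three smooth infinity points $[1:0:0]$, $[0:1:0]$, $[1:-1:0]$. I would then compute the divisor of $y^2=P(x_1)P(x_2)$ on $\overline{\Delta}$ by Bezout: zeros of order $2$ at the six points $(\alpha_i,\alpha_j)$ with $i\neq j$; zeros of order $3$ at $[0:1:0]$ and at $[1:0:0]$ (each of the three lines $x_1=\alpha_i z$ and $x_2=\alpha_i z$ passes through the respective infinity point); and poles of order $6$ at $[0:1:0]$, $[1:0:0]$, $[1:-1:0]$ (from the $z^6$ denominator). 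The only odd-multiplicity points in the net divisor are $[0:1:0]$ and $[1:0:0]$, giving exactly two ramification points. Riemann--Hurwitz then yields $g(\overline{C})=2\neq 3$, the desired contradiction.

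For the second claim, assume $\mu_{2,3}\neq 0$ and $d=0$. Then the tangent cones from \eqref{eq:tangent_cone_inf} at $\infty_1,\infty_2$ are double lines of common slope $r=\mu_{1,3}/(2\mu_{2,3})$, producing $A_2$-cusps for generic coefficients; each cusp lifts to a single point of $\overline{\Delta}$. Using a uniformizer $t$ at this point with $\ord(z)=2$, I would expand $\widetilde{P}(x_1,z)=P(r)z^3+P'(r)z^2(x_1-rz)+\cdots$ to obtain $\ord_{\infty_i}(P(x_j))\in\{0,-6\}$ whenever $P(r)\neq 0$, so every order of $P(x_1)P(x_2)$ on $\overline{\Delta}$ is even. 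Hence the double cover is unramified and $g(\overline{C})=g(\overline{\Delta})\le 1$. The sub-cases where $P(r)=0$ (producing a single ramification of order $1$ at each infinity) or where the singularity is worse than $A_2$ (which only lowers $g(\overline{\Delta})$) are handled analogously and still force $g(\overline{C})\le 2$.

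Finally, for the \emph{in particular} assertion, once $\mu_{2,3}\neq 0$ and $d\neq 0$ the Pl\"ucker formula applied to the plane quartic $\widetilde{\Delta}_{cl}$ with two nodes at infinity gives $g(\overline{\Delta})=3-\delta_{\mathrm{tot}}$, so typicality forces $\delta_{\mathrm{tot}}=2$ and hence $\widetilde{\Delta}$ has no affine singular points; this is precisely the statement that the parameter tuple avoids $\varrho(\mathcal{C}_{fin})(K)$, which combined with avoidance of $\varrho(\mathcal{C}_{inf})(K)$ yields $K$-genericity. The main technical obstacle is the cuspidal analysis in the second step: tracking the higher Newton--Puiseux terms in the exceptional configurations, and ruling out the possibility that the unramified double cover becomes reducible (in which case $\widetilde{C}$ would itself fail to be geometrically irreducible, again contradicting typicality).
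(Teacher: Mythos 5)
Your approach is genuinely different from the paper's: where the paper proves the $\mu_{2,3}=0$ case by explicitly blowing up $[0:0:1:0]$ (to show $\infty_1,\infty_2$ ramify and $\infty_3$ does not) and proves the $d=0$ case by exhibiting an explicit factorization of the planar model $B(x,y)=A+\tilde{S}y^2+T^3y^4$ of $\tilde{C}$, you propose to read the ramification of $\overline{C}\to\overline{\Delta}$ directly off the divisor of $P(x_1)P(x_2)$ and conclude by Riemann--Hurwitz. This is a reasonable strategy, and your divisor computation at the three points at infinity of the cubic $\overline{\Delta}$ is correct (net orders $-3,-3,-6$). However, there is a real gap at the step ``zeros of order 2 at the six points $(\alpha_i,\alpha_j)$'': that is exactly the assertion that none of the lines $x_1=\alpha_i$, $x_2=\alpha_j$ is tangent to $\widetilde{\Delta}$ at $(\alpha_i,\alpha_j)$. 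A tangency there would push the local order of $P(x_1)P(x_2)$ up to $3$, adding ramification and invalidating the $g(\overline{C})=2$ count (four such points would give $g(\overline{C})=3$ and no contradiction). You should verify this explicitly: a direct computation gives $\partial_{x_1}\widetilde{\Delta}(\alpha_i,\alpha_j)=P'(\alpha_i)Q(\alpha_j)/(\alpha_i-\alpha_j)$ and $\partial_{x_2}\widetilde{\Delta}(\alpha_i,\alpha_j)=-P'(\alpha_j)Q(\alpha_i)/(\alpha_i-\alpha_j)$, both nonzero when $P$ is separable and $\gcd(P,Q)=1$ (which typicality forces), so the tangency cannot occur. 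Without this observation your Riemann--Hurwitz count is not justified; and if $P$ is not separable the six points degenerate and the same observation no longer applies, so the case $P(x)=c(x-a)^2(x-b)$ has to be treated separately (the paper uses a change of model for this, cf.\ Lemma~3.6(b)).

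In the second step (the $\mu_{2,3}\neq 0$, $d=0$ case) the proposal leaves the essential work undone by your own admission. Two remarks that would sharpen it. First, the subcase $P(r)=0$ never occurs: $r=\mu_{1,3}/(2\mu_{2,3})$ is the double root of $T$, and $\mathrm{Res}_x(P,T)=a_3^2\,\mathrm{Res}_x(P,Q)$, so $P(r)=0$ would force a common root of $P,Q$ and hence reducibility of $\widetilde{\Delta}$, contradicting typicality; so the cusps at infinity are always unramified. Second, you still need the absence of affine ramification exactly as above, so that the total ramification divisor is genuinely empty, giving $g(\overline{C})\le 1$ when $\overline{C}$ is irreducible. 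Only then does the dichotomy ``reducible (contradicts typicality) or genus $\le 1$ (contradicts typicality)'' close the argument. The paper avoids this bookkeeping entirely by producing the explicit factorization of $B(x,y)$ under $d=0$, which proves $\tilde{C}$ is reducible in one stroke; your route is geometrically cleaner but needs the two verifications above to be complete.
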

\begin{proof}
    Let $\mu=\mu_{1,3}^2-4\mu_{0,3}\mu_{2,3}$.
	If $\mu_{2,3}=0$, then the curve $\tilde{\Delta}$ is cubic. It must be smooth, since otherwise it would not be of genus $1$ by the Pl\"{u}cker formula. Hence $\mu_{1,3}\neq 0$ and $\mu=\mu_{1,3}^2-4\mu_{0,3}\mu_{2,3}\neq 0$. The projective closure $\tilde{\Delta}_{cl}$ has three points at infinity $\infty_1=[1:0:0]$, $\infty_2=[0:1;0]$ and $\infty_3=[1:-1:0]$. It follows from the assumptions above that each of these points is non-singular, hence $\tilde{\Delta}_{cl}=\overline{\Delta}$. 
	An argument similar to that in Lemma \ref{lem:C_sing}(d) shows that the points $\infty_1$ and $\infty_2$ are ramified and the point $\infty_3$ is unramified. In fact, the initial blowup of the point $[0:0:1:0]$ on $\tilde{C}_{cl}$ produces three points in the preimage, $\infty_1'$, $\infty_2'$ which are non-singular and the point $\infty_{3}'$ which is a non-ordinary double point. The next blowup of this point produces two preimages $\infty_{3,\pm}'$ which in the cover $\overline{C}\rightarrow\overline{\Delta}$ map to $\infty_{3}$. Hence, the Riemann-Hurwitz formula implies that the geometric genus of $\tilde{C}$ is $2$, which is a contradiction.
	
	If $\mu_{2,3}\neq 0$, then the curve $\tilde{\Delta}$ has degree $4$ and has two double points at infinity $[0:1:0]$ and $[1:0:0]$. 
	
	We may replace $P(x)$ with $x(x-1)(x-a)$, without loss of generality, as the argument remains consistent across $\overline{K}$-isomorphic models of $\tilde{C}$. For this model we have $\mu=(b_{1} - a b_{3})^2 - 4 b_{0} (b_{2} + (1 + a) b_{3})$. Assume $\mu=0$.
 
	In the case where $b_3=0$, it follows that $b_2\neq 0$. Given $\mu=0$, the resulting conditions simplify to $b_0=\frac{b_1^2}{4b_2}$ and $Q(x)=\frac{(b_1 + 2 b_2 x)^2}{4b_2}$. The model $\tilde{C}'$ of $\overline{C}$ is reducible, hence $\tilde{C}$ is not $K$-typical.
	
	We assume from now on that $b_3=1$.
	It follows from the proof of Proposition \ref{prop:D_C_generic_implies_C_irr} that the planar model of $\tilde{C}$ is reducible only if $\epsilon\neq 0$. 
	
	If $a=0$ and $\mu=0$, then $b_{1}^2 - 4 b_{0} (1 + b_{2})=0$. We have two possibilities. First, when $b_1=-2b_0$, which leads to $\tilde{\Delta}(x_1,x_2)=b_{0}(x_1-1) (x_2-1) (x_1 x_2 - x_1 - x_2)$ being reducible. Second, when $b_1\neq-2b_0$. We want to prove that the polynomial $B(x,y)$ from Proposition \ref{prop:D_C_generic_implies_C_irr} is reducible in this case. The number $A$ from \eqref{eq:A_formula} equals
	\[A=Res_{x}(P(x),Q(x))=b_{0}^2(1+b_{0}+b_{1}+b_{2}).\]
    Since $b_2=(-4b_{0}+b_{1}^2)/(4b_{0})$, it follows that $A=\frac{b_0}{4} (2 b_0 + b_1)^2$.
    It follows from the proof of Proposition \ref{prop:D_C_generic_implies_C_irr} that if the polynomial $B(x,y)$ factors, it can only factor into two quadratic polynomials in $y$, i.e., $A(1-\epsilon y+\phi y^2)(1+\epsilon y+\phi y^2)$ where
	\[\phi=\frac{(2 b_0 + b_1 x)^3}{4 b_0^2 (2 b_0 + b_1)}\]
	and
	\[\epsilon=-\frac{b_0 (4 b_0 + b_1) + b_1 (3 b_0 + b_1) x}{b_0 (2 b_0 + b_1)}.\]
	
	Since $\mu=0$ it follows that when $b_0 \left(-a+2 b_0+b_1\right) \left(a \left(b_1-a\right)+2 b_0\right)=0$, the algebraic set $\tilde{\Delta}(x,y)=0$ is not irreducible and is irreducible otherwise. 
    \begin{itemize}
        \item When $\mu=0,\ b_{0}=0$ we have $b_{1}=a$ and so $\tilde{\Delta}(x,y) = x_1 x_2 (x_1 x_2-a)(1+a+b_{2})$.
        \item When $\mu=0,\ b_{1}=a-2b_{0}$, we have $\tilde{\Delta}(x,y) = b_{0}(x_1-1)(x_2-1)(x_1 x_2-x_1-x_2+a)$.
        \item When $\mu=0, \ b_{0} = a(a-b_{1})/2$, we have $\tilde{\Delta}(x,y) = \frac{a-b_{1}}{2a} (x_1-a)(x_2-a)(x_1 x_2-a(x_1+x_2)+a)$.
        
    \end{itemize}
 
    Hence, the remaining cases are covered by the following substitution
	\[b_2=\frac{\left(a-b_1\right){}^2}{4 b_0}-a-1\]
    obtained from $\mu=0$.
    The coefficients $A=\frac{\left(-a+2 b_0+b_1\right){}^2 \left(a \left(b_1-a\right)+2 b_0\right){}^2}{16 b_0}$,
	\[\phi = -\frac{\left(x \left(a-b_1\right)-2 b_0\right){}^3}{2 b_0 \left(-a+2 b_0+b_1\right) \left(a \left(b_1-a\right)+2 b_0\right)}\]
	and
	\[\epsilon=\frac{x \left(b_1-a\right)}{2 b_0}+2 b_0 \left(\frac{x-1}{a-2 b_0-b_1}+\frac{x-a}{a \left(a \left(a-b_1\right)-2 b_0\right)}\right)+\left(\frac{1}{a}+1\right) x\]
	provide the factorization of the polynomial $B(x,y)$.
	Hence $\mu=\mu_{1,3}^2-4\mu_{0,3}\mu_{2,3}\neq 0$. 
	The last claim follows from the assumption about $\widetilde{\Delta}$ which has geometric genus $1$ and degree $4$ so there are two nodal singularities at infinity and by Pl\"{u}cker formula there are no singularities at the affine part, proving the $K$-genericity of $(\widetilde{\Delta},\widetilde{C})$.
\end{proof}

\begin{corollary}\label{cor:typical_vs_generic}
	A pair of curves $(\widetilde{\Delta},\widetilde{C})$ is $K$-generic if and only if it is $K$-typical.
\end{corollary}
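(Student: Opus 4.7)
The plan is to observe that this corollary is essentially a repackaging of the results proved throughout Section \ref{sec:generic}, and the main task is simply to match each condition in the $K$-typical definition with the corresponding result in the $K$-generic framework.

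First I would dispatch the direction ($K$-typical $\Rightarrow$ $K$-generic), which is already the content of Proposition \ref{prop:typic_imp_generic}: it shows that the typicality assumptions force $\mu_{2,3}\neq 0$ and $\mu_{1,3}^2-4\mu_{0,3}\mu_{2,3}\neq 0$, i.e.\ the coefficient tuple lies outside $\varrho(\mathcal{C}_{inf})(K)$; and the smoothness of $\widetilde{\Delta}$ in the affine patch, which is needed to keep the geometric genus equal to $1$ rather than having it drop (as an affine singularity would impose via the Pl\"ucker formula), forces the tuple to lie outside $\varrho(\mathcal{C}_{fin})(K)$ as well.

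For the converse, assume $(\widetilde{\Delta},\widetilde{C})$ is $K$-generic. I would then simply invoke, in sequence, the structural results already proved:
\begin{itemize}
\item Proposition \ref{prop:Delta_tilde_irr} gives that $\widetilde{\Delta}$ is geometrically reduced and geometrically irreducible;
\item Lemma \ref{lem:Delta_sing} identifies the two singular points of $\widetilde{\Delta}_{cl}$ at infinity as ordinary nodes and deduces via Pl\"ucker's formula that the smooth model $\overline{\Delta}$ has genus $1$, so the geometric genus of $\widetilde{\Delta}$ equals $1$;
\item Proposition \ref{prop:D_C_generic_implies_C_irr} gives that $\widetilde{C}$ is geometrically reduced and geometrically irreducible;
\item Lemma \ref{lem:C_sing}(d) then establishes that the normalization $\overline{C}$ has genus $3$, so the geometric genus of $\widetilde{C}$ equals $3$.
\end{itemize}
Putting these four statements together gives exactly the definition of $K$-typical.

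There is essentially no obstacle here: the corollary is a bookkeeping statement that ties together the equivalent technical ($K$-generic, stated in terms of an explicit non-vanishing of discriminantal ideals) and geometric ($K$-typical, stated in terms of irreducibility, reducedness, and geometric genera) formulations. All nontrivial work has already been carried out in the preceding propositions and lemmas. The only thing to be careful about in the write-up is to make sure that the geometric genus of $\widetilde{\Delta}$ (respectively $\widetilde{C}$), as a possibly singular curve, is interpreted as the genus of its normalization $\overline{\Delta}$ (respectively $\overline{C}$), which is precisely what is computed in Lemmas \ref{lem:Delta_sing} and \ref{lem:C_sing}.
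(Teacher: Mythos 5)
Your proposal is correct and follows the same logic the paper leaves implicit: the converse direction is exactly Proposition~\ref{prop:typic_imp_generic} (whose proof already notes that genus $1$ plus Pl\"{u}cker forces the absence of affine singularities, i.e.\ exclusion from $\varrho(\mathcal{C}_{fin})$), and the forward direction is the conjunction of Proposition~\ref{prop:Delta_tilde_irr}, Lemma~\ref{lem:Delta_sing}, Proposition~\ref{prop:D_C_generic_implies_C_irr}, and Lemma~\ref{lem:C_sing}(d). Nothing is missing, and the caveat you add about interpreting geometric genus via the normalizations $\overline{\Delta}$, $\overline{C}$ is the right one to flag.
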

\begin{remark}
 Notice that the curve $\widetilde{\Delta}$ can be of genus $1$ while not being $K$-generic. This happens when $\mu_{2,3}=0$ or $\mu_{1,3}^2-4\mu_{0,3}\mu_{2,3}=0$.  In such a case, the curve $\widetilde{C}$ is typically a union of two genus $1$ curves and the restriction of the projection $\pi_1$ to each component is a degree $1$ map or has geometric genus $2$ and the restriction of $\pi_1$ is a degree $2$ map, cf. Table \ref{tab:non_generic_cases}.
\end{remark}

\subsection{Applications}
In order to see what is the contribution of $\#\tilde{C}(\Fq) - \#\tilde{\Delta}(\Fq)$ to the bias, we need to compare $\#\tilde{C}(\Fq)$ and $\#\tilde{\Delta}(\Fq)$ to $\#\overline{C}(\Fq)$ and $\#\overline{\Delta}(\Fq)$, where $\overline{C}$ and $\overline{\Delta}$ are non-singular and projective models of $\tilde{C}$ and $\tilde{\Delta}$ respectively.

Let  $S_{\overline{\Delta},\infty}$ and $S_{\overline{C},\infty}$ denote the closed subscheme defined over $K$ which are a union of  closed points $\{\infty_{1,\pm},\infty_{2,\pm}\}$ and $\eta^{-1}(\{\infty_{1,\pm},\infty_{2,\pm}\})$, respectively. 

Let $\mathcal{G}$ denote the union of the closed points in $\overline{C}$ which map via $\psi$ to the singular points of either $\tilde{C}$ or $\tilde{C}'$. The scheme $\mathcal{G}$ is defined over $K$.
\begin{corollary}\label{cor:singular}
	Let $q$ be an odd prime power. Let $K=\mathbb{F}_{q}$ and assume that $(\tilde{\Delta},\tilde{C})$ is $K$-typical.
	We have $|S_{\overline{\Delta},\infty}(\mathbb{F}_{q})|=|S_{\overline{C},\infty}(\mathbb{F}_{q})|$. 
	
	Moreover, when $a_3\neq 0$ and $x\in\mathcal{G}(\mathbb{F}_{q})$ is such that there exists two elements $x_1,x_2\in\mathbb{F}_{q}$ with $\psi(x)=(x_1,x_2,0)$, the set $\eta^{-1}(\{x\}) (\mathbb{F}_{q})$ has exactly two elements if $Q(x_1)Q(x_2)$ is a non-zero square in $\mathbb{F}_{q}$. Otherwise, if $Q(x_1)Q(x_2)$ is not a square, the set $\eta^{-1}(\{x\}) (\mathbb{F}_{q})$  is empty.
\end{corollary}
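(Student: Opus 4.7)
My approach is to derive both claims from Lemma~\ref{lem:C_sing} together with elementary Galois descent over $\mathbb{F}_q$. By Proposition~\ref{prop:typic_imp_generic}, the $K$-typicality hypothesis yields $d := \mu_{1,3}^2 - 4\mu_{0,3}\mu_{2,3} \neq 0$ and that the polynomials $P$ and $Q$ share no common root over $\overline{K}$. Both of these facts will be essential.

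For the equality $|S_{\overline{\Delta},\infty}(\mathbb{F}_q)| = |S_{\overline{C},\infty}(\mathbb{F}_q)|$, I would first invoke Lemma~\ref{lem:C_sing}(d) to identify $S_{\overline{\Delta},\infty} = \{\infty_{1,\pm},\infty_{2,\pm}\}$ geometrically; all four points are defined over $K(\sqrt{d})$ with $\infty_{i,+}$ and $\infty_{i,-}$ Galois-conjugate whenever $\sqrt{d}\notin K$. Since $\eta$ is ramified of index $2$ at precisely these four points, each fiber $\eta^{-1}(\infty_{i,\pm})$ consists of a single point of $\overline{C}$, and by Lemma~\ref{lem:C_sing}(d) it is again defined over $K(\sqrt{d})$. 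The induced bijection $\eta\colon S_{\overline{C},\infty}\to S_{\overline{\Delta},\infty}$ is $K$-equivariant, hence commutes with the Frobenius action; the two counts therefore agree. Concretely, both equal $4$ if $d\in(\mathbb{F}_q^\times)^2$ and $0$ otherwise.

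For the second claim, I would apply Lemma~\ref{lem:C_sing}(a), which asserts that for a node $(x_1,x_2,0)$ of $\tilde{C}$ with $P(x_1)=P(x_2)=0$ and $x_1\neq x_2$, the two geometric preimages in $\overline{C}$ under $\psi$ are both defined over the field $K(x_1,x_2,\sqrt{Q(x_1)Q(x_2)})$. When $x_1,x_2\in\mathbb{F}_q$, $K$-genericity rules out $P$ and $Q$ having a common root, so $Q(x_1)Q(x_2)\in\mathbb{F}_q^\times$. The Frobenius endomorphism then either fixes the two branches individually, namely when $Q(x_1)Q(x_2)$ is a nonzero square in $\mathbb{F}_q$, producing exactly two $\mathbb{F}_q$-rational preimages in $\overline{C}$, or else swaps them when $Q(x_1)Q(x_2)$ is a non-square, producing none. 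This establishes the stated dichotomy.

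The computations of Lemma~\ref{lem:C_sing} having already been carried out, no substantive obstacle remains; the corollary is a direct Galois-descent bookkeeping exercise. The only subtle point is ensuring $Q(x_1)Q(x_2)\neq 0$ for rational singular points, which is precisely where the coprimality of $P$ and $Q$ guaranteed by $K$-genericity, rather than merely the geometric genus count, enters crucially.
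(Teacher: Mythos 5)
Your proposal is correct and fills in, via explicit Galois descent, the bookkeeping that the paper leaves implicit behind its one-line proof (``This follows directly from the Lemmas~\ref{lem:Delta_sing} and~\ref{lem:C_sing}''). One small point worth flagging: you invoke only part~(a) of Lemma~\ref{lem:C_sing} (separable $P$); when $P$ is not separable, part~(b) applies, but the conclusion is the same, since the two points over $(a,a,0)$ are $K$-rational while $Q(a)Q(a)=Q(a)^2$ is automatically a nonzero square (non-zero by coprimality of $P,Q$), so your dichotomy formula still reads correctly, and the nodes over $(a,b,0)$, $(b,a,0)$ are handled exactly as in part~(a).
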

\begin{proof}
	This follows directly from the Lemmas \ref{lem:Delta_sing} and \ref{lem:C_sing}.
\end{proof}

\begin{corollary}\label{cor:final}
	Let $q$ be an odd prime power. Let $K=\mathbb{F}_{q}$ and assume that $(\tilde{\Delta},\tilde{C})$ is $K$-typical and $a_3\neq 0$. We have that
\begin{align*}
\tilde{M}_{2,q}(\mathcal{E})/q&=\#\overline{C}(\Fq)-\#\overline{\Delta}(\Fq)+q-\#S(\Fq)
\end{align*}

\end{corollary}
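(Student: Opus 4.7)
The plan is to combine Theorem~\ref{thm:Second_moment_curve_formula} with Proposition~\ref{prop:Curve_defect} so that
\[
\tilde M_{2,q}(\mathcal F)/q \;=\; (\#\tilde C(\Fq)-\#\tilde\Delta(\Fq)) + q - \#S(\Fq) - \#P(\Fq) + \#(P\cap S)(\Fq) + \Bigl[\sum_{P(x)\equiv 0}\phi_q(Q(x))\Bigr]^2,
\]
and then to reconcile the right-hand side with $\#\overline C(\Fq)-\#\overline\Delta(\Fq)+q-\#S(\Fq)$ by computing the differences $\#\overline C - \#\tilde C$ and $\#\overline\Delta - \#\tilde\Delta$ introduced by the normalizations, using Lemmas~\ref{lem:Delta_sing}--\ref{lem:C_sing} and Corollary~\ref{cor:singular}.

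Next I would expand the character sum. Writing
\[
T'(q):=\sum_{\substack{x_1\neq x_2,\, x_i\in\Fq\\ P(x_1)=P(x_2)=0}}\phi_q(Q(x_1)Q(x_2)),
\]
and using that $K$-typicality forces $P$ and $Q$ to have no common root (Proposition~\ref{prop:typic_imp_generic}), so that $\phi_q(Q(x))^2=1$ on $P(\Fq)$, one obtains $\bigl[\sum_{P(x)\equiv 0}\phi_q(Q(x))\bigr]^2=\#P(\Fq)+T'(q)$. Differentiating $\Delta=(x_1-x_2)\tilde\Delta$ yields $\tilde\Delta(x,x)=P'(x)Q(x)-P(x)Q'(x)$, and since $Q$ does not vanish on $P(\Fq)$, the set $P\cap S$ consists exactly of the double roots of $P$. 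Put $D:=\#(P\cap S)(\Fq)$.

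For the normalization contributions, Lemma~\ref{lem:Delta_sing} yields $\#\overline\Delta(\Fq)-\#\tilde\Delta(\Fq)=2+2\phi_q(d)$, with $d=\mu_{1,3}^2-4\mu_{0,3}\mu_{2,3}$, by resolving the two nodes at infinity whose tangent cone $x^2-dy^2$ splits precisely over $K(\sqrt d)$. For $\overline C$ one collects three sources: passing from $\tilde C$ to $\tilde C_{cl}$ adds $+1$; resolving $[0:0:1:0]$ contributes $1+2\phi_q(d)$, since its four preimages in $\overline C$ are defined over $K(\sqrt d)$ by Lemma~\ref{lem:C_sing}(d); and the affine singularities contribute $T'(q)+D$, because by Corollary~\ref{cor:singular} (together with Lemma~\ref{lem:C_sing}(a)) each $\Fq$-rational node $(x_1,x_2,0)$ with $x_1\ne x_2$ and $P(x_1)=P(x_2)=0$ contributes $\phi_q(Q(x_1)Q(x_2))$, while in the inseparable case of Lemma~\ref{lem:C_sing}(b) the extra $\Fq$-singular point $(a,a,0)$ resolves to two $\Fq$-rational smooth points, giving $+1$ per double root. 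Summing,
\[
\#\overline C(\Fq)-\#\tilde C(\Fq)=T'(q)+D+2+2\phi_q(d),
\]
so $\#\tilde C-\#\tilde\Delta=\#\overline C-\#\overline\Delta-T'(q)-D$. Substituting this and the character-sum identity into the first display, the quantities $T'(q)$, $D$ and $\#P(\Fq)$ cancel pairwise, leaving the claim.

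The main obstacle is the careful accounting of the affine singularities, which behaves differently according to whether $P$ is separable (six geometric nodes of the form $(r_i,r_j,0)$, $i\ne j$, whose $\Fq$-status depends on the Galois type of the splitting field of $P$) or inseparable (two nodes $(a,b,0),(b,a,0)$ plus the extra non-nodal point $(a,a,0)$, with $a,b\in K$ forced by Lemma~\ref{lem:C_sing}(b)); one must verify in each case via Corollary~\ref{cor:singular} that the signed $\Fq$-preimage count equals $T'(q)+D$, and the role of $\#(P\cap S)(\Fq)$ in the starting formula is precisely to absorb the $+D$ appearing in the inseparable case.
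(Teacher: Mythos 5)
Your proof is correct and follows essentially the same route as the paper's: combine Theorem~\ref{thm:Second_moment_curve_formula} with Proposition~\ref{prop:Curve_defect}, expand the square $\bigl[\sum_{P(x)\equiv 0}\phi_q(Q(x))\bigr]^2=\#P(\F_q)+T'(q)$, and account for the normalization $\overline C\to\tilde C_{cl}$ and $\overline\Delta\to\tilde\Delta_{cl}$ using Lemmas~\ref{lem:Delta_sing}--\ref{lem:C_sing} and Corollary~\ref{cor:singular} so that $T'(q)$, $D=\#(P\cap S)(\F_q)$ and $\#P(\F_q)$ cancel. The only cosmetic difference is that you make the $2+2\phi_q(d)$ contribution at infinity explicit on both sides (which cancels), whereas the paper invokes $|S_{\overline\Delta,\infty}(\F_q)|=|S_{\overline C,\infty}(\F_q)|$ implicitly via Corollary~\ref{cor:singular} and phrases the affine-node bookkeeping with a $\#P(\F_q)(\#P(\F_q)-1)$ term rather than your $T'(q)+D$ shorthand.
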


\begin{proof}
It follows from Proposition \ref{prop:Curve_defect} and Theorem \ref{thm:Second_moment_curve_formula} that
\begin{align*}
\frac{\tilde{M}_{2,q}(\mathcal{E})}{q} &= \# \tilde{C}(\Fq)-\#\tilde\Delta(\Fq)+q-\#S(\Fq)-\#P(\Fq)\\
&+\#(P \cap S)(\Fq)+\left[ \sum_{P(x) \equiv 0} \phi_q(Q(x))\right]^2.
\end{align*}
If $P$ is separable, it follows from Lemma \ref{lem:C_sing} and Corollary \ref{cor:singular} that
\begin{align*}
\#\tilde{C}(\Fq)-\#\tilde\Delta(\Fq)&=\#\overline{C}(\Fq)-\#\overline\Delta(\Fq)+\#P(\Fq)\left(\#P(\Fq)-1\right)\\
&-\sum_{\substack{x_1,x_2\in \Fq\\P(x_1)=P(x_2)=0\\x_1 \ne x_2}}\left(\phi_q(Q(x_1)Q(x_2))+1\right).
\end{align*}

Hence,
\begin{align*}
\frac{\tilde{M}_{2,q}(\mathcal{E})}{q} &= \# \overline{C}(\Fq)-\#\overline\Delta(\Fq)+q-\#S(\Fq)-\#P(\Fq)\\
&+\#(P\cap S)(\Fq)+\sum_{\substack{x\in \Fq\\P(x)=0}}\phi_q(Q(x))^2.
\end{align*}
The claim follows since
$$\sum_{\substack{x\in \Fq\\P(x)=0}}\phi_q(Q(x))^2=\#P(\Fq)-\#(P\cap Q)(\Fq)$$
and the polynomials $P$ and $Q$ have no common roots since the curves are $K$-typical, hence $K$-generic, hence $\#(P\cap Q)(\Fq)=0$ and $\#(P\cap S)(\Fq)=0$.

When $P$ is non-separable it follows from Lemma \ref{lem:C_sing} that $\#(P\cap Q)(\Fq)=0$ and $\#(P\cap S)(\Fq)=1$. The remaining modifications are easy to replicate from the previous case.
\end{proof}

Suppose that $\tilde{C}$ is geometrically irreducible and geometrically reduced. We denote by $\overline{C}$ nonsingular projective curve birationally equivalent to the projective closure of $\tilde{C}$. Let $\tau_1, \tau_2$ and $\tau_3$ be three involutions of $\overline{C}$ whose restriction to $\tilde{C}$ are equal to $(x_1, x_2, y)\mapsto (x_1,x_2,-y)$, $(x_1, x_2, y)\mapsto (x_2,x_1,-y)$ and $(x_1, x_2, y)\mapsto (x_2,x_1,y)$ respectively. Denote by $\phi_i:\overline{C} \rightarrow C_i$ the natural map from $\overline{C}$ to the non-singular projective quotients of $\overline{C}$ by $\tau_i$. Note that $C_1=:\overline{\Delta}$ is the nonsingular projective closure of $\tilde{\Delta}$. Moreover, denote by $\phi_4:\overline{C} \rightarrow C_4$ the map to the nonsingular projective quotient of $\overline{C}$ by the group $G$ generated by involutions $\tau_i$ or equivalently the quotient of $\overline{\Delta}$ by an automorphism of $\overline{\Delta}$ induced by $(x_1,x_2)\mapsto (x_2,x_1)$. 
\begin{proposition}\label{prop:quotient} 
	For a prime power $q$ and $\tilde{C}$ geometrically irreducible and geometrically reduced over $\Fq$ we have
	$$ \#\overline{C}(\Fq)+2\#C_4(\Fq) = \#C_1(\Fq)+\#C_2(\Fq)+\#C_3(\Fq).$$
\end{proposition}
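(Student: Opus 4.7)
The plan is a direct orbit-by-orbit analysis of the action of the Klein four subgroup $G=\{1,\tau_1,\tau_2,\tau_3\}\subset\Aut(\overline C)$ on $\overline C(\overline{\Fq})$. To set up, I observe that on the dense open $\tilde C\subset\overline C$ the composition $\tau_1\tau_2$ acts as $(x_1,x_2,y)\mapsto(x_2,x_1,y)$, which is exactly $\tau_3$, so by separatedness $\tau_3=\tau_1\tau_2$ on all of $\overline C$ and $G$ is a Klein four group, defined over the prime field. Consequently its action commutes with the geometric Frobenius $F$ on $\overline C(\overline{\Fq})$. Under the quotient maps $\phi_j$, the set $C_j(\overline{\Fq})$ is in bijection with the $\langle\tau_j\rangle$-orbits of $\overline C(\overline{\Fq})$ and $C_4(\overline{\Fq})$ with the $G$-orbits, and in every case the $\Fq$-rational points correspond to $F$-stable orbits. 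Since every $\tau_j$-orbit lies in a single $G$-orbit, an $F$-stable $\tau_j$-orbit forces its ambient $G$-orbit to be $F$-stable, and the proposition reduces to summing over $F$-stable $G$-orbits $O$ the local identity
\[
|O\cap\overline C(\Fq)|+2=\sum_{j=1}^{3}\#\bigl\{\langle\tau_j\rangle\text{-orbits in }O\text{ that are }F\text{-stable}\bigr\}.
\]

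I then verify this local identity by the stabilizer type of a point $P\in O$. When $G_P=G$ the orbit is a singleton, $P$ is automatically $F$-fixed, and both sides equal $3$. When $G_P$ has order two we may assume by symmetry $G_P=\langle\tau_1\rangle$, in which case $O=\{P,\tau_2 P\}$ splits into two singleton $\tau_1$-orbits but into a single $\tau_2$- and a single $\tau_3$-orbit; Frobenius either fixes both points of $O$ (giving $4=2+1+1$) or swaps them (giving $2=0+1+1$). When $G_P=\{1\}$ the orbit has size four and $G$ acts simply transitively, so any bijection of $O$ commuting with $G$ is left multiplication by some $h\in G$; thus $F|_O=h$, and checking each of the four subcases $h\in G$ verifies the identity (e.g.\ $h=1$ gives $4+2=2+2+2$, while $h=\tau_i$ gives $0+2=2+0+0$ with the nonzero summand in the $\tau_i$-slot, because $F=\tau_i$ preserves the two $\tau_i$-orbits of $O$ and exchanges the two $\tau_j$-orbits for $j\neq i$). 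Summing produces the stated equality.

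There is no real obstacle in this plan; the key enabling fact is that $G$ is defined over $\Fq$ so commutes with $F$, reducing everything to an internal orbit calculation, and the observation in the free case that a $G$-equivariant bijection of a free $G$-orbit must be left multiplication. The only subtlety to flag is that the $\tau_i$, a priori defined only on $\tilde C$, extend uniquely to $\overline C$ by functoriality of normalization. One could alternatively phrase this proof more conceptually via the projector identity $2e_G+e_{\{1\}}=e_{\langle\tau_1\rangle}+e_{\langle\tau_2\rangle}+e_{\langle\tau_3\rangle}$ in $\mathbb{Q}[G]$ (with $e_H=\tfrac{1}{|H|}\sum_{h\in H}h$), applied to $F$-traces on $H$-invariants in $\ell$-adic cohomology of $\overline C$ via the Lefschetz trace formula; but for the point-counting identity needed here, the elementary orbit analysis is the most transparent route.
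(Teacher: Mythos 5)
Your proof is correct and follows essentially the same route as the paper's: both decompose $\overline C(\overline{\Fq})$ into $G$-orbits, reduce the global count to a per-orbit identity, and verify it by a short case analysis on the orbit size (equivalently, the stabilizer type). Your observation that on a free orbit the Frobenius must act by left multiplication by a unique $h\in G$ is a slightly cleaner parametrization of the subcases than the paper's appeal to the existence of an involution $\tau$ with $P^\sigma=\tau P$, but the underlying argument is the same.
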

\begin{proof}
	Denote by $\sigma$ the generator of $\Gal(\mathbb{F}_{q^2}/\Fq)$.	
	Since $\overline{C}(\overline{\Fq})$ is a disjoint union of orbits $\mathcal{O}$ of $G$ acting on $\overline{C}(\overline{\Fq})$. We denote by $\mathcal{O}(\F_q)$ the set of $\F_{q}$-rational points of $\mathcal{O}$. It is enough to prove that for each orbit $\mathcal{O}$ we have
	\begin{equation}\label{eq:orbit}
	\#\OO(\Fq)+2\#\phi_4(\OO)(\Fq) = \#\phi_1(\OO)(\Fq)+\#\phi_2(\OO)(\Fq)+\#\phi_3(\OO)(\Fq).
	\end{equation}
	The group $G$ has $4$ elements, so by the orbit-stabilizer theorem the size of the orbit $\mathcal{O}$ is a divisor of $4$.
	\begin{enumerate}
		\item [a)] Case $\#\OO=1$. Here \eqref{eq:orbit} follows immediately.
		\item [b)] Case $\#\OO=2$. There is a $\tau=\tau_i$ such that every element of $\#\OO$ is fixed by $\tau$. If $\#\OO(\Fq)=2$, then the left-hand side of \eqref{eq:orbit} is equal to $2+2$, while the right-hand side is equal to $2+1+1$ (since $\#\phi_i(\OO)=2$). If $\#\OO(\Fq)=0$, then we have two possibilities. First, if $\#\phi_4(\OO)(\Fq)=0$, then the right-hand side of \eqref{eq:orbit} is zero since there are natural $\Fq$-rational quotient maps from $C_j \rightarrow C_4$. If $\#\phi_4(\OO)(\Fq)=1$, then for every $P \in \OO$ and any involution $\tau'$ different than $\tau$ we have $P^\sigma = \tau' P$ where $\sigma$ is a generator of $\Gal(\mathbb{F}_{q^2}/\Fq)$. In particular, $\#\phi_1(\OO)(\Fq)+\#\phi_2(\OO)(\Fq)+\#\phi_3(\OO)(\Fq)=2$ as required.
		\item[c)] Case $\#\OO=4$. If $\#\OO(\Fq)=4$, then $\#\phi_4(\OO)(\Fq)=1$ and $ \#\phi_i(\OO)(\Fq)=2$ for $i=1,2,3$ (since $G$ acts faithfully on $\OO$) hence the claim follows. If~$\#\OO(\Fq)=0$ (since involutions are $\Fq$-rational this is the only other option), we have two possibilities. First, if $\#\phi_4(\OO)(\Fq)=0$, then the claim follows as above. If $\#\phi_4(\OO)(\Fq)=1$, then for every $P \in \OO$ there is an involution $\tau$ such that $P^\sigma = \tau P$ (note that in this case $P$ can not be defined over quartic extension of $\Fq$). The claim now follows as in b).
	\end{enumerate}
\end{proof}

\begin{remark}
Proposition \ref{prop:quotient} is a finite field analogue of the theorem \cite[Thm. 5.9]{Accola} applied to the automorphism group $\mathbb{Z}/2\oplus\mathbb{Z}/2$ acting on $\overline{C}$.  It follows from \cite[5.10]{Accola} that
\begin{equation}\label{eq:accola_cor}
	g(\overline{C})+2g(C_4)=g(C_1)+g(C_2)+g(C_3)
\end{equation}
when the field characteristic of the curves $\overline{C}$, $C_i$ is odd or zero.
\end{remark}

\begin{proposition}\label{prop:C_4_smooth_conic}
	Let $K$ denote a field such that $\charac K \neq 2$ and let $(\tilde{\Delta},\tilde{C})$ be a  $K$-typical pair.
	The quotient $C_4$ of $\overline{C}$ by the group $\langle \tau_1,\tau_2\rangle$ is isomorphic to a smooth conic, hence $g(C_4)=0$.
\end{proposition}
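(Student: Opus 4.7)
The plan is to exhibit $C_{4}$ explicitly via elementary symmetric functions and then deduce its smoothness from the irreducibility of $\tilde{\Delta}$.

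First, I would set up the tower $\overline{C}\to\overline{\Delta}\to C_{4}$. The quotient by $\tau_{1}:y\mapsto -y$ eliminates $y$ and yields the non-singular projective model of $\tilde{\Delta}(x_{1},x_{2})=0$, so $\overline{C}/\tau_{1}=\overline{\Delta}$. Since $\tau_{2}=\tau_{1}\tau_{3}$, the images of $\tau_{2}$ and $\tau_{3}$ in $\Aut(\overline{\Delta})$ coincide and both act as the involution $\sigma:(x_{1},x_{2})\mapsto(x_{2},x_{1})$; consequently $C_{4}=\overline{\Delta}/\sigma$.

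Next, I would exploit that $\Delta(x_{1},x_{2})=P(x_{1})Q(x_{2})-P(x_{2})Q(x_{1})$ is antisymmetric, so $\tilde{\Delta}$ is symmetric in $x_{1},x_{2}$. Introducing the elementary symmetric functions $u=x_{1}+x_{2}$ and $v=x_{1}x_{2}$, each antisymmetric monomial $x_{1}^{i}x_{2}^{j}-x_{1}^{j}x_{2}^{i}$ divided by $x_{1}-x_{2}$ is an explicit polynomial in $u,v$, and a direct computation of the contributions of the six Pl\"{u}cker-type coordinates $\mu_{i,j}=a_{i}b_{j}-a_{j}b_{i}$, $i<j$, gives
\[
F(u,v):=\tilde{\Delta}(x_{1},x_{2})=-\bigl[\mu_{0,1}+\mu_{0,2}u+\mu_{0,3}u^{2}+(\mu_{1,2}-\mu_{0,3})v+\mu_{1,3}uv+\mu_{2,3}v^{2}\bigr].
\]
The coordinate ring of the affine quotient $\tilde{\Delta}/\sigma$ is then $K[u,v]/(F)$, so $C_{4}$ is the non-singular projective model of the plane curve $\{F=0\}\subset\mathbb{A}^{2}_{u,v}$.

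Finally, I would conclude via a formal argument. Proposition~\ref{prop:typic_imp_generic} guarantees $\mu_{2,3}\neq 0$, so $F$ has degree exactly $2$. Any factorization $F=F_{1}F_{2}$ or non-trivial square factor of $F$ in $\overline{K}[u,v]$ pulls back via $u\mapsto x_{1}+x_{2}$, $v\mapsto x_{1}x_{2}$ to a corresponding factorization or square factor of $\tilde{\Delta}$ in $\overline{K}[x_{1},x_{2}]$, since $u$ and $v$ remain algebraically independent; Proposition~\ref{prop:Delta_tilde_irr} precludes this, as $\tilde{\Delta}$ is geometrically irreducible and geometrically reduced. Hence $F$ is itself geometrically irreducible and reduced, and as a degree-two form in two variables it defines a rank-$3$ plane conic, i.e.\ a smooth conic in $\mathbb{P}^{2}$. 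Its projective closure is already non-singular, so $C_{4}$ is isomorphic to it and $g(C_{4})=0$.

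The only step that is not formal is the explicit expansion producing the displayed formula for $F$; it reduces to the identity $(x_{1}^{i}x_{2}^{j}-x_{1}^{j}x_{2}^{i})/(x_{1}-x_{2})=-(x_{1}x_{2})^{i}h_{j-i-1}(x_{1},x_{2})$ together with the expressions of $h_{0},h_{1},h_{2}$ in $(u,v)$, so it is routine. Once $F$ is in hand, everything else follows formally from Propositions~\ref{prop:Delta_tilde_irr} and~\ref{prop:typic_imp_generic}; the only potential subtlety is to ensure that the conic and the abstract quotient are identified on the nose, but as both have the same function field $\mathrm{Frac}(K[u,v]/(F))$ and both are smooth projective, this identification is canonical.
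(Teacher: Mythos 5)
Your proof is correct and arrives at the same explicit model of $C_{4}$ that the paper uses (a conic in the $(u,v)=(s_1,s_2)$-plane with coefficients $\mu_{i,j}$; your formula for $F(u,v)$ matches the paper's, which is stated with $\mu_{i,j}$ replaced by $\mu_{j,i}=-\mu_{i,j}$). Where you diverge is in the final smoothness step: the paper computes the determinant of the associated $3\times 3$ symmetric matrix explicitly, finding it equal to $-16\,\mu_{2,3}\cdot\mathrm{Res}_x(P,Q)$, and observes that both factors are nonzero under $K$-genericity. You instead argue that any factorization (or square factor) of $F$ in $\overline{K}[u,v]$ would pull back, via the algebraically independent substitution $(u,v)\mapsto(x_1+x_2,x_1x_2)$, to a factorization or square factor of $\tilde{\Delta}$ in $\overline{K}[x_1,x_2]$, contradicting Proposition~\ref{prop:Delta_tilde_irr}; hence $F$ is geometrically irreducible and reduced, so the degree-two form has full rank and the conic is smooth. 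Both arguments are valid. The paper's is self-contained and computational (a single resultant identity), which also yields the extra piece of information that the obstruction to smoothness is precisely $\mu_{2,3}\cdot\mathrm{Res}_x(P,Q)$; yours is softer, trading the explicit discriminant for the (already-proved) geometric irreducibility of $\tilde{\Delta}$, and is arguably more robust as it doesn't require verifying a polynomial identity. Your reduction of $C_4$ to $\overline{\Delta}/\sigma$ via $\tau_2=\tau_1\tau_3$ is also a clean way to sidestep explicitly tracking the $\eta=y^2$ coordinate that the paper lists among the primary invariants and then eliminates.
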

\begin{proof}
	Let $s_1=x_1+x_2, s_2=x_1\cdot x_2$ and $\eta=y^2$. These polynomials are a complete set of primary invariants of the group $\langle \tau_1,\tau_2\rangle$ acting on the polynomial ring $K[x_1,x_2,y]$. The curve $C_4$ has a model
	\[\mu _{1,0}+s_1^2 \mu _{3,0}+s_1 \mu _{2,0}+s_2 s_1 \mu _{3,1}+s_2 \left(2 \mu _{0,3}+\mu _{2,1}+\mu _{3,0}\right)+s_2^2 \mu _{3,2}=0,\quad R(s_1,s_2)=\eta\]
	where $R(s_1,s_2)=P(x_1)P(x_2)$. We eliminate the second equation. The discriminant of the first equation is $-16\mu_{2,3} Res_{x}(P(x),Q(x))\neq 0$ which is non-vanishing due to Corollary \ref{cor:typical_vs_generic}. Hence, the conic $C_4$ in the $s_1,s_2$-plane is not a union of two lines, hence geometrically irreducible and reduced.
\end{proof}

\begin{remark}
	A given point $(x_0,x_0)$, $x_0\in \overline{K}$, is singular on $\tilde{\Delta}$ if and only if it is a common root of the polynomials $\tilde{\Delta}(x,x)$ and $\delta(x)=\delta_1(x,x)=\delta_2(x,x)$ from Section \ref{sec:generic}.
	In fact, the resultant $\Res_{x}(\tilde{\Delta}(x,x),\delta(x))$ equals $-1/16 \cdot\mu_{2,3}\cdot \textrm{disc} \tilde{\Delta}(x,x)$ and $\mu_{2,3}\neq 0$ in the $K$-generic context.
	Hence, a point of the form $(x_0,x_0)\in \tilde{\Delta}(\overline{K})$ is singular if and only if  $x_0$ is a multiple root of $\tilde{\Delta}(x,x)$.
	Thus the polynomial $\tilde{\Delta}(x,x)$ is separable when the curves $\tilde{\Delta}$ and $\tilde{C}$ are $K$-generic.
\end{remark}

\begin{corollary}\label{prop:D}
	Let $K$ denote a field such that $\charac K \neq 2$.
	For a $K$-typical pair $(\tilde{\Delta},\tilde{C})$ we have that the genera of $\overline{C}, C_1=\overline{\Delta}$, $C_2$, $C_3$ and $C_4$ are equal to $3, 1, 2, 0$ and $0$, respectively. 
	Hence, the curve $\overline{C}$ is bielliptic and hyperelliptic.
	Let $q$ be an odd prime power. 
	When the pair $(\tilde{\Delta},\tilde{C})$ is $\mathbb{F}_q$-typical we have 
	\begin{equation}\label{eq:count_formula_C_delta}
		\#\overline{C}(\Fq)-\#\overline{\Delta}(\Fq)=\#C_2(\Fq)-(q+1).
	\end{equation}
	
\end{corollary}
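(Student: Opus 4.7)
The plan is to harvest three of the five genera from results already in hand, compute a fourth by Riemann--Hurwitz, and then invoke Accola's formula \eqref{eq:accola_cor} to obtain the last one. From Lemma \ref{lem:C_sing}(d) we have $g(\overline{C}) = 3$; from Lemma \ref{lem:Delta_sing}, $g(C_1) = g(\overline{\Delta}) = 1$; and from Proposition \ref{prop:C_4_smooth_conic}, $g(C_4) = 0$.

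Next, I would compute $g(C_3) = 0$ by applying Riemann--Hurwitz to the degree-$2$ quotient $\overline{C} \to C_3$ and counting the fixed points of $\tau_3$. A fixed point corresponds to $x_1 = x_2 = x$ with $\tilde{\Delta}(x,x) = 0$ and $y = \pm P(x)$. The polynomial $\tilde{\Delta}(x,x) = P'(x)Q(x) - P(x)Q'(x)$ has leading coefficient $-\mu_{2,3}$, which is nonzero by Proposition \ref{prop:typic_imp_generic}, so it has degree exactly $4$; moreover it is separable in the $K$-typical case by the remark preceding the corollary. Since $P$ and $Q$ share no roots, each of the four roots $x_0$ produces two distinct points on $\overline{C}$ (either the pair $(x_0,x_0,\pm P(x_0))$ directly, or the two smooth points on the desingularized model of Lemma \ref{lem:C_sing}(b) when $P$ has a double root). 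Because $\tau_3$ permutes $\infty_1 \leftrightarrow \infty_2$, none of the four points $\infty_{1,\pm}, \infty_{2,\pm}$ is fixed. Hence $N = 8$, and Riemann--Hurwitz gives
\[
2 g(\overline{C}) - 2 \;=\; 2\bigl(2 g(C_3) - 2\bigr) + N \;\Longrightarrow\; 4 \;=\; 4 g(C_3) + 4,
\]
so $g(C_3) = 0$. Substituting into Accola's relation \eqref{eq:accola_cor} now forces
\[
g(C_2) \;=\; g(\overline{C}) + 2 g(C_4) - g(C_1) - g(C_3) \;=\; 3 + 0 - 1 - 0 \;=\; 2.
\]
Hyperellipticity of $\overline{C}$ follows from the degree-$2$ map $\overline{C} \to C_3 \cong \mathbb{P}^1$, and biellipticity from the degree-$2$ map $\overline{C} \to C_1 = \overline{\Delta}$ of genus one.

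For the point-count identity \eqref{eq:count_formula_C_delta} I would feed the preceding data into Proposition \ref{prop:quotient}. Over $\Fq$, both $C_3$ and $C_4$ are smooth curves of genus zero, and since $\Fq$ is a $C_1$-field each admits an $\Fq$-rational point, so $C_3 \cong C_4 \cong \mathbb{P}^1_{\Fq}$ and $\#C_3(\Fq) = \#C_4(\Fq) = q+1$. Substituting these values into Proposition \ref{prop:quotient} and rearranging immediately yields \eqref{eq:count_formula_C_delta}.

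The main technical obstacle is the careful enumeration of the eight fixed points of $\tau_3$ on $\overline{C}$: one has to treat the case where $P$ has a multiple root using the birational model of Lemma \ref{lem:C_sing}(b), verify that $\tau_3$ acts freely on the four points in the resolution of $[0:0:1:0]$, and confirm the separability of $\tilde{\Delta}(x,x)$ in the $K$-generic setting. Once $N = 8$ is established, the remainder of the argument is formal.
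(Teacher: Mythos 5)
Your proposal is correct, and it follows the same overall strategy as the paper's proof: collect $g(\overline{C})=3$, $g(C_1)=1$, $g(C_4)=0$ from Lemma \ref{lem:C_sing}, Lemma \ref{lem:Delta_sing} and Proposition \ref{prop:C_4_smooth_conic}, determine one of the two remaining genera by Riemann--Hurwitz applied to a quotient by a single involution, and then invoke Accola's identity \eqref{eq:accola_cor} for the last one. The one step where you diverge from the paper is the choice of involution. You enumerate the eight fixed points of $\tau_3$ (four separable roots of $\tilde{\Delta}(x,x)$, two points on $\overline{C}$ above each, with the double-root case of $P$ handled via the desingularized model of Lemma \ref{lem:C_sing}(b), and the four points above $[0:0:1:0]$ permuted in pairs because $\tau_3$ swaps $\infty_1\leftrightarrow\infty_2$), giving $r=8$, $g(C_3)=0$, and then $g(C_2)=2$ by Accola. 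The paper instead shows $\tau_2$ is fixed-point free: the affine fixed-point condition $P(x)=\tilde{\Delta}(x,x)=0$ is excluded by $K$-genericity when $P$ is separable, and when $P$ has a double root $a$ the two smooth points above $(a,a,0)$ are swapped by $\tau_2$ since $\tau_2$ negates $y$; together with the fact that the points above $[0:0:1:0]$ are fixed by $\tau_1$ but not $\tau_2$, this gives $r=0$, $g(C_2)=2$ directly, and then $g(C_3)=0$ by Accola. The two arguments are symmetric and equivalent in substance; the paper's choice is marginally slicker (establishing that a fixed locus is empty is lighter than a precise count of eight points across two cases), while your version makes the ramification of $\phi_3$, and thereby the hyperelliptic map, completely explicit. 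Your derivation of \eqref{eq:count_formula_C_delta} from Proposition \ref{prop:quotient} together with $\#C_3(\Fq)=\#C_4(\Fq)=q+1$ matches the paper's; the paper cites the Hasse--Weil bound for a genus-$0$ curve rather than the $C_1$ property of $\Fq$, but both give the same conclusion.
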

\begin{proof}
	Each map $\phi_i:\overline{C}\rightarrow C_i$ is a finite separable map onto a smooth projective curve $C_i$. Since the curve $\overline{C}$ has genus $3$ by Lemma \ref{lem:C_sing}, it follows from the Riemann-Hurwitz formula \cite[IV Cor. 2.4]{Hartshorne} that
	\[2=g(C_i)+\frac{r}{4}\]
	where we denote by $r$ the degree of the ramification divisor. Thus, $r$ belongs to the~set $\{0,4,8\}$.
	
	Maps $\phi_{1},\phi_{2},\phi_{3}$ are covers of degree $2$ and their ramification locus corresponds to the number of fixed points of the involutions $\tau_1,\tau_2,\tau_3$, respectively.
	
	In Lemma \ref{lem:C_sing}, we already discussed the case of $\eta=\phi_1$, which is ramified at four points, proving that $r=4$ and $g(C_1)=1$. The fixed points of $\tau_1$ are the four points $\{\infty_{i,\pm}\}$ on $\tilde{C}$.
	
	Proposition \ref{prop:C_4_smooth_conic} proves that $g(C_4)=0$. Thus, it follows from \eqref{eq:accola_cor} that $2=g(C_2)+g(C_3)$. Moreover, we have that
	the four points $\{\infty_{i,\pm}\}$ on $\overline{C}$ are not fixed by $\tau_2$, since they are fixed by $\tau_1$. Indeed, if they were fixed by $\tau_1$ and $\tau_2$ the contribution to the ramification divisor of the map $\overline{C}\rightarrow C_4$ would be too large.
	
	We claim that the involution $\tau_2:(x_1,x_2,y)\mapsto (x_2,x_1,-y)$ has at most one fixed point on $\tilde{C}$ and at most two fixed points on $\overline{C}$. 
	An inclusion $(x,x,0)\in \tilde{C}(\overline{K})$ for a certain $x\in\overline{K}$ would imply that 
	\begin{equation}\label{eq:deg_condition_C}
	P(x)=\tilde{\Delta}(x,x)=0.
	\end{equation}
 
    When $P$ is separable, this is not possible due to $K$-genericity. Then, $\tau_2$ has no fixed points, hence $r=0$ and $g(C_2)=2$ and $g(C_3)=0$.
    
    Otherwise, when $P$ is not separable it follows from Lemma \ref{lem:C_sing} that $P$ has a double root $a\in K$ such that $\tilde{\Delta}(x,x)=(x-a)h(x)$ and $h(a)\neq 0$ which follows by $K$-genericity. Hence on $\overline{C}$ we have exactly two smooth points over the point $(a,a,0)$ on $\tilde{C}$. Since the points $\{\infty_{i,\pm}\}$ are not fixed by $\tau_2$, then $r\leq 2$, hence $r=0$ which concludes the genera computation.
	
	Since the curve $\overline{C}$ has a degree $2$ map onto a genus $1$ curve it is bielliptic and since it has also a degree $2$ cover onto a genus $0$ curve, it is hyperelliptic.

	The formula \eqref{eq:count_formula_C_delta} follows from Proposition \ref{prop:quotient} and the fact that both $C_3$ and $C_4$ are smooth projective of genus $0$. The equality $\# C_3(\mathbb{F}_q)=\#	C_4(\mathbb{F}_q)=q+1$ follows from the Hasse-Weil bound.
\end{proof}

\begin{remark}\label{rem:genus_2_curve_explicit}
	The Weierstrass points of $C_2$ are ramification points of the quotient map $C_2 \rightarrow C_4$ induced by the push-forward of the involution $\tau_3$ to $C_2$, i.e., the fixed points of that involution on $C_2$. These are the two points at infinity, and four affine points given by the roots of $\tilde{\Delta}(x,x)=0$. 
\end{remark}

\section{Proof of the Bias conjecture for typical curves} \label{sec:contribution}

To compute the bias and prove our main result in Theorem \ref{thm:main2}, we need to understand the average over primes of the remaining term from Corollary \ref{cor:final}, namely \( \#S(\mathbb{F}_p) \). To address this, we consider the following general situation.

Let $f(x)\in \ZZ[x]$ be an irreducible polynomial. For a prime $p$, we denote by $\#Z_{f}(\Fp)=\#\{\alpha \in \Fp: f(\alpha)=0\}$ the number of distinct zeros of the mod $p$ reduction of $f(x)$. 

\begin{proposition}\label{cor:average}
	Let $f(x)\in \ZZ[x]$ be an irreducible polynomial. We have that
	$$\lim_{x\rightarrow \infty} \frac{1}{\pi(x)}\sum_{p<x} \#Z_{f}(\Fp)=1,$$
	where $\pi(x)$ denotes the number of primes less than $x$.
\end{proposition}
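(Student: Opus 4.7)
The plan is to reduce the claim to a statement about fixed points of Frobenius and then apply Chebotarev's density theorem together with Burnside's orbit-counting lemma.

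First, I would set up the Galois-theoretic picture. Let $K/\mathbb{Q}$ be the splitting field of $f$, let $G=\Gal(K/\mathbb{Q})$, and let $R=\{\alpha_1,\dots,\alpha_n\}\subset K$ be the set of roots of $f$. Since $f\in\mathbb{Z}[x]$ is irreducible, the group $G$ acts transitively on $R$. For any prime $p$ that does not divide the discriminant of $f$, the factorization of $f\bmod p$ into irreducible factors in $\mathbb{F}_p[x]$ is encoded by the cycle structure of any Frobenius element $\Frob_p\in G$ acting on $R$: each cycle of length $k$ corresponds to an irreducible factor of degree $k$. In particular, the number of linear factors of $f\bmod p$, which is precisely $N_f(p)$ (these are the distinct roots in $\mathbb{F}_p$), equals the number of fixed points of $\Frob_p$ on $R$. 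The set of primes dividing $\mathrm{disc}(f)$ is finite and hence contributes $0$ to the limiting average, so we may ignore them.

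Next, I would invoke Chebotarev's density theorem: for each conjugacy class $C\subset G$, the density of primes $p$ with $\Frob_p\subset C$ equals $|C|/|G|$. Writing $\chi(g)=|\mathrm{Fix}_R(g)|$ for the number of fixed points (a class function on $G$), Chebotarev gives
\[
\lim_{x\to\infty}\frac{1}{\pi(x)}\sum_{p<x}N_f(p)\;=\;\lim_{x\to\infty}\frac{1}{\pi(x)}\sum_{p<x}\chi(\Frob_p)\;=\;\frac{1}{|G|}\sum_{g\in G}\chi(g).
\]

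Finally, Burnside's lemma identifies the right-hand side with the number of orbits of $G$ on $R$. Since $f$ is irreducible, $G$ acts transitively on the roots, so there is exactly one orbit and the average equals $1$. No serious obstacle arises here: the entire argument is classical, the only subtlety being the negligible contribution from ramified primes, which is easily handled because they form a finite set and $\sum_{p<x}N_f(p)\leq n\pi(x)$ ensures uniform boundedness of the summands.
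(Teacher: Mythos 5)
Your proof is correct and follows essentially the same route as the paper: reduce $N_f(p)$ to the fixed-point count of $\Frob_p$ on the roots, apply Chebotarev to convert the prime average into a group average, and use transitivity of the Galois action on the roots. The only cosmetic difference is that you close with Burnside's orbit-counting lemma, whereas the paper phrases the final step as the orthogonality relation $\langle\chi_\rho,\chi_{\mathrm{triv}}\rangle=1$; these are the same fact.
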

\begin{proof}
	Denote by $\rho$ a Galois representation $\rho:\GalQ \rightarrow GL(V)$ which is a permutation representation of the absolute Galois group acting on the roots $\alpha_1, \ldots, \alpha_d \in \overline{\QQ}$ of $f(x)$. Its is a $d$-dimensional representation that factors through $G=\Gal(\QQ(\alpha_1, \ldots, \alpha_d)/\QQ)$, and since $f(x)$ is irreducible over $\QQ$ it  contains exactly one copy of one dimensional trivial representation (which is a restriction of $\rho$ to the subspace of $V$ generated by formal sum $\alpha_1+\cdots+\alpha_d$). Moreover, for all but finitely many primes $p$, we have that $\#Z_{f}(\Fp)=Trace(\rho(Frob_p))$, where $Frob_p\in G$ is a Frobenius element at $p$. By the Chebotarev density theorem, the elements $Frob_p$ are equidistributed among conjugacy classes of $G$. We denote by $\chi_\rho$ the character of $\rho$ (i.e., $\chi_\rho(g)=Trace(\rho(g))$ for all $g \in G$). We have that
	$$\lim_{x\rightarrow \infty} \frac{1}{\pi(x)}\sum_{p<x} \#Z_{f}(\Fp)(p)=\frac{1}{\#G}\sum_{g \in G}\chi_\rho(g)=\left<\chi_\rho,\chi_{triv} \right>,$$ 
	where $\chi_{triv}$ is the character of trivial representation. The claim follows from orthogonality relations for characters of irreducible representations and the fact that $\rho$ contains only one copy of the trivial representation.
\end{proof}

Now we are ready to prove our main result.

\begin{proof}[Proof of Theorem \ref{thm:main2}]
    There exists only finitely many primes $p$ such that there exists $u_0\in\mathbb{P}^{1}(\Fq)$, $p\mid q$, such that the fiber $\mathcal{E}_{u_0}$ is not geometrically irreducible. Suppose not, then there exists a parameter $u_0\in\mathbb{Q}$ such that $\mathcal{E}_{u_0}$ is a union of two lines (or a double line). This is only possible when $P(x)u_0+Q(x)$ is a polynomial of degree $2$ with a double root or $P(x)u_0+Q(x)=\lambda\in\mathbb{Q}$ or $P(x)$ is a zero polynomial. Each such case is excluded by the genericity assumption. So, without loss of generality, we can assume that $p$ is large enough so that each fiber $\mathcal{E}_{u_0}$ is geometrically irreducible.
	
	For a prime $p$ let $d_p=p+1-\#\overline{D}(\Fp)$. It follows from Proposition \ref{prop:quotient} and Corollary \ref{prop:D} that one has  $$\#\overline{C}(\Fp)-\#\overline{\Delta}(\Fp)=-d_p.$$ The Weil conjectures imply that for all but finitely many $p$ one has 
	$$f_4(p)=p^2,\quad  f_3(p)=-p \cdot d_p,\quad f_2(p)=-p\cdot\#S(\Fp)\textrm{ and }  f_j(p)=0 \textrm{ for } j<2.$$

It follows from the argument after \eqref{eq:second_moment}, applied to the genus two curve \( \overline{D} \), that the average of \( \frac{d_p}{\sqrt{p}} \) over the primes is equal to zero, hence the $f_{3}(p)$ term in the $p$ power expansion of $\tilde{M}_{2,p}(\mathcal{E}_{U})$ averages to zero. The average of $f_2(p)$ term equals $-m$ by Proposition \ref{cor:average} applied to the polynomial 
	$$S(x)=\tilde{\Delta}(x,x)=	\mu _{3,2} x^4 +2  \mu _{3,1}x^3+\left(\mu _{2,1}+3 \mu _{3,0}\right)x^2 +2\mu _{2,0} x +\mu _{1,0}$$
	where $\mu_{i,j}=a_i b_j -a_j b_i$.
	The expressions $\tilde{M}_{2,p}$ and $M_{2,p}$ differ by the number $a_{\infty,p}^2$. When the fiber $\mathcal{E}_{\infty}$ is an elliptic curve, then the argument preceding \eqref{eq:sec_moment} implies that the  sequence $\{a_{\infty,p}^2\}_{p}$ has average equal to $1$ and the term contributes to the average of $f_{2}(p)$. When the fiber is singular (but geometrically irreducible), then the contribution $a_{\infty,p}^2\in\{0,1\}$ and the average is again $0,1$, respectively and contributes to the the average of $f_0(p)$. Hence the average of $f_2(p)$ for $M_{2,p}$ is $-m-\delta$ where $\delta\in\{0,1\}$ and so the Bias conjecture holds.
	
	\textbf{Family with $m=-1$ or $m=-2$}:
 In the affine space $\mathbb{A}^8$ of $8$-tuples $(a_0,a_1,a_2,a_3,b_0,b_1,b_2,b_3)$ the subset of parameters for which $(\tilde{\Delta},\tilde{C})$ are $\mathbb{Q}$-typical contains a Zariski open subset of positive dimension and the polynomial $S(x)$ is irreducible over $\mathbb{Q}(a_0,a_1,a_2,a_3,b_0,b_1,b_2,b_3)$. 
 By the Hilbert irreducibility theorem, there are infinitely many cases where $S$ is irreducible over $\mathbb{Q}$. Two families $\mathcal{E}_U$ and $\mathcal{E}'_U$ are isomorphic only when the $j$-invariants are equal which is another closed condition. Finally, the vanishing of the discriminant $\Delta(\mathcal{E}_{\infty})$ of $\mathcal{E}_{\infty}$ is a closed condition which is satisfied for example when $a_3=0$. Hence, the bias is equal to $-1$ or $-2$ for infinitely many choices of parameters.
	
	Let $A$ denote the tuple $(a_0,a_1,a_2,a_3)$ and $B$ denote the tuple $(b_0,b_1,b_2,b_3)$.
	
	\textbf{Family with $m=-3$}: Consider $A=(0, s, -1 - s, 1)$ and 
	\begin{align*}
	B=\Bigl(&\frac{s+1}{s^2+3 s+4},b_1,-\frac{b_1 s^3+4 b_1 s^2+7 b_1 s+4 b_1-s^2-4 s-1}{s \left(s^2+3 s+4\right)},\\&\frac{b_1 s^2+3 b_1 s+4 b_1+s^2+s-1}{s \left(s^2+3 s+4\right)}\Bigr)
	\end{align*}
	where $s\in\mathbb{Q}\setminus\{0,-1,-2\}$. 
	For $s\notin\{-1,0\}$ the coefficients $\mu_{i,j}$ do not vanish. The~polynomials $P$, $Q$ have no common roots if $s\notin\{0,-1,-2\}$. Next, the expression $\mu_{1,3}^2-4\mu_{0,3}\mu_{2,3}=\frac{\left(s^2-5 s-5\right) \left(s^2+3 s+3\right)}{\left(s^2+3 s+4\right)^2}\neq 0$ and $$S(x)=\frac{\left(x^2+x+1\right) \left(s^2 x^2-3 s^2 x+s^2+3 s x^2-5 s x+s+4 x^2-2 x\right)}{s^2+3 s+4}.$$
	Polynomial $S(x)$ has exactly two irreducible factors over $\mathbb{Q}$ when $s\neq -1,-2$. It~follows from Definition \ref{def:K_generic_tuple} and Corollary \ref{cor:typical_vs_generic} that the tuple $(\tilde{\Delta},\tilde{C})$ is $\mathbb{Q}$-typical.  For a pair $(b_1,s)$ and $(b_1',s')$ the $j$-invariants of the corresponding $j$-invariants $j$, $j'$ of the families are equal if and only if $s'\in\{\frac{1}{1-s},1-s,\frac{1}{s},\frac{s-1}{s},s,\frac{s}{s-1}\}$. The discriminant $\Delta(\mathcal{E}_{\infty})=(s-1)^2 s^2$ is non-vanishing.
	
	\textbf{Family with $m=-4$}:
	Let $A=(0, 1, -2, 1)$ and $$B=\left(\frac{2 (s+4)^2}{25 s},b_1,-\frac{20 b_1 s+s^2+28 s+16}{10 s},\frac{20 b_1 s+s^2+38 s+16}{20 s}\right).$$ 
	For $s\in\mathbb{Q}\setminus\{0, -4, 6, 8/3, 8, 2\}$  the pair $(\tilde{\Delta},\tilde{C})$ is $\mathbb{Q}$-typical and the polynomial $$S(x)=-\frac{(x-2) (x-1) (s-10 x+4) (5 s x-2 s-8)}{50 s}$$ has four linear factors over $\mathbb{Q}$. Two pairs $(b_1,s)$ and $(b_1',s')$ generate the same $j$-invariant only if they are equal or $b_1=b_1'$ and $s'=\frac{16}{s}$. Finally, the discriminant $\Delta(\mathcal{E}_{\infty})$ vanishes.
	
	\textbf{Family with $m=-5$}:
	Let $A=(0, 37/16, -(53/16), 1)$ and $$B=\left(-\frac{576}{65},b_1,\frac{5744-3445 b_1}{2405},\frac{16 \left(65 b_1-63\right)}{2405}\right).$$
	For any $b_1\in\mathbb{Q}$ the pair $(\tilde{\Delta},\tilde{C})$ is $\mathbb{Q}$-typical, $S(x)=\frac{1}{65} (x-3) (x-2) (5 x+37) (13 x-6)$, the $j$-invariants are distinct for distinct $b_1$'s and $\Delta(\mathcal{E}_{\infty})=603729/65536$.
\end{proof}

\section{Non-typical cases}\label{sec:non_typical}

The non-typical choices of $(\tilde{\Delta},\tilde{C})$  can be characterised using the violation of the $K$-genericity condition. Assume in what follows that the field $K$ has characteristic $\charac K\neq 2,3$.

If the pair $P,Q$ of polynomials has at least one common root, we have the following types. Assume that the common factor of $P$ and $Q$ is denoted by $R$. If
\begin{itemize}
	\item $\deg R=1$, then $\tilde{\Delta}$ is a union of two lines and a conic and $\tilde{C}$ is a union of two doubles lines and a curve which (if geometrically irreducible) is of geometric genus at most $1$.
	\item $\deg R=2$ case is not considered because then the family $\mathcal{E}_{U}$ is a family of singular cubics.
	\item $\deg R=3$, then we have a family of twists of a single curve.
\end{itemize}

Suppose now that $\deg R=0$. We use the notation from Section \ref{sec:generic}. We characterize the remaining non-typical setups  of $\mathcal{T}=(a_0,a_1,a_2,a_3,b_0,b_1,b_2,b_3)\in K^8$ by the following conditions:
\[\mathcal{C}_{1}: \mathcal{T}\in \varrho(\mathcal{P}_1),\]
\[\mathcal{C}_{2}: \mathcal{T}\in \varrho(\mathcal{P}_2),\]
\[\mathcal{C}_{3}: \mathcal{T}\in \varrho(\mathcal{S}).\]
Notice that $\mathcal{C}_1$ is equivalent to $\mu_{2,3}=0$, $\mathcal{C}_2$ is equivalent to $\mu= \mu_{1,3}^2-4\mu_{0,3}\mu_{2,3}=0$ and the condition $\mathcal{C}_{3}$ implies that the following polynomial
\[\mu _{1,2}^3+27 \mu _{0,1} \mu _{1,3}^2+27 \left(\mu _{0,2}^2-\mu _{0,1} \mu _{1,2}\right) \mu _{2,3}-9 \mu _{0,3} \left(\mu _{1,2}^2+9 \mu _{0,1} \mu _{2,3}\right)\]
vanishes.

 In Table \ref{tab:non_generic_cases} we discuss a geometric situation in each of the $8$ vanishing setups for the triple $(\mathcal{C}_1,\mathcal{C}_2,\mathcal{C}_3)$ of conditions. The entries should be read in the following way. In line $i$ the conditions $(\mathcal{C}_1,\mathcal{C}_2,\mathcal{C}_3)$ being true or false determine a locus $\mathcal{M}$ of coefficients $\mathcal{T}\in\overline{K}^8$ such that the corresponding curves $\tilde{\Delta}$ and $\tilde{C}$ satisfy the properties described in the ''geometric setup'' column. The statement $g(X)$ for an $X$ means that away from some $\mathcal{T}$ in the appropriate Zariski closed subsets of $\overline{K}^{8}$ of positive codimension the given curve $X$ is geometrically irreducible and reduced and its genus $g(X)$ is given as in the entry. We explain in more detail below what happens in each case.
 
 We notice that the verification of the Bias conjecture in cases $2$-$8$ from Table \ref{tab:non_generic_cases} does not depend on the unproven cases of the Sato-Tate conjecture for curves. In cases $2,3,4$ we apply the formula \eqref{eq:first_moment} for genus $1$ curves. In cases $4,6,7$ and $8$ we only deal with genus $0$ curves and in the case $5$ the quotient curve $C_2$ has genus $1$ as well due to \eqref{eq:accola_cor}.
\begin{table}[htb]
	\begin{tabular}{c|c|c|c|c}
		& $\mathcal{C}_1$ & $\mathcal{C}_2$ & $\mathcal{C}_3$ & geometric setup \\
		1&NO & NO & NO & $K$-typical pair $(\tilde{\Delta},\tilde{C})$\\
		2&NO & NO & YES &  $g(\tilde{\Delta})=0$ and $g(\tilde{C})=1$\\
		3&NO & YES & NO & $g(\tilde{\Delta})=1$ and $\tilde{C}=B_1\cup B_2$, $\tau(B_1)=B_2$ and $g(B_i)=1$\\
		4&NO & YES & YES & $g(\tilde{\Delta})=0$ and $\tilde{C}=B_1\cup B_2$, $\tau(B_1)=B_2$ and $g(B_i)=0$\\
		5&YES & NO & NO & $g(\tilde{\Delta})=1$ and $g(\tilde{C})=2$\\
		6&YES & NO & YES & $g(\tilde{\Delta})=0$ and $g(\tilde{C})=0$\\
		7&YES & YES & NO & $g(\tilde{\Delta})=0$ and $\tilde{C}=B_1\cup B_2$, $\tau(B_1)=B_2$ and $g(B_i)=0$\\
		8&YES & YES & YES & $\tilde{\Delta}=L_1\cup L_2$ and $\tilde{C}=L'_1\cup L'_2\cup L'_3\cup L'_4$, $g(L_i)=g(L_j')=0$
	\end{tabular}\caption{A description of a typical geometric setup under each set of conditions.}\label{tab:non_generic_cases}
\end{table}

In line $3$, the curve $\tilde{C}$ decomposes into two irreducible components $B_i$, which (if geometrically irreducible) are curves of genus $1$ themselves. They are permuted by the appropriate involution $\tau$ acting on $\tilde{C}$. A similar phenomenon happens for the case no. 4 and no. 7. In line $8$ we denote by $L_i$ a line and by $L_i'$ a genus $0$ curve. We explain below where does the geometric setup in each case of Table \ref{tab:non_generic_cases} come from.

\textbf{Case 1.}
$\mu_{2,3}\neq0$, $\mu\neq0$ and $(a_0,\ldots, b_3)\not\in\rho(\mathcal{S})$.
This is the $K$-generic situation dealt with in the main theorem of this paper.

\textbf{Cases 2, 3, 4 and 6.}
In each of these setups, it is enough to assume that $P$ is a polynomial of degree $3$ and has the form $x(x-a)(x-b)$ for some $a,b\in\overline{K}$. We form a function field $F$ in the remaining coefficients over $K$ and argue over that field that the conditions of the ''geometric setup'' hold true. The decomposition of $\tilde{C}$ into irreducible components can be written explicitly and the genera computations were performed using MAGMA. We point the interested reader to our ancillary files attached to this manuscript, cf. \cite{Magma_computations}.

\textbf{Case 5.}
$\mu_{2,3}=0$, $\mu\neq0$ and $(a_0,\ldots, b_3)\not\in\rho(\mathcal{S})$.
The setup of no. 5 follows from the proof of Proposition \ref{prop:typic_imp_generic}.

\textbf{Case 7.}
$\mu_{2,3}=0$, $\mu=0$ and $(a_0,\ldots, b_3)\not\in\rho(\mathcal{S})$.
Without loss of generality we assume that $P$ has degree $3$ and working over $\overline{K}$ we assume that $P(x)=x(x-a)(x-b)$. It follows from the conditions $\mathcal{C}_1$ and $\mathcal{C}_2$ that 
\[\tilde{\Delta}: a (b - x_1 - x_2) - b (x_1 + x_2) + x_1^2 + x_1 x_2 + x_2^2\]
and the curve is a non-singular conic because the condition $C_3$ is equivalent to $(a^2-ab+b^2)=0$, which does not hold. Hence $\tilde{\Delta}$ is geometrically irreducible of genus $0$. The curve $\tilde{C}$ decomposes into two curves $B_1$ and $B_2$ with the following equations
\[B_1:\tilde{\Delta}(x_1,x_2)=0,\quad (a - x_2) (b - x_2) x_2=y,\]
\[B_2:\tilde{\Delta}(x_1,x_2)=0,\quad (a - x_2) (b - x_2) x_2=-y,\]
which for $\tau=\tau_1$ satisfy $\tau(B_1)=B_2$ and both are geometrically irreducible of genus~$0$.

\begin{example}
	Consider the polynomials $$P(x)=\frac{1}{5} \left(18 x^3+15 x^2+10 x+5\right),$$ $$Q(x) = \frac{1}{3} \left(18 x^3+15 x^2+10 x+12\right).$$ The generalized second moment of the family $\mathcal{E}_{U}: P(x)U+Q(x)=y^2$ is
	\[\widetilde{M}_{2,p}(\mathcal{E}) = 2p^2+p(-\left(\frac{-3}{p}\right)-\left(\frac{-35}{p}\right)-1-\mu_1(p)-\mu_2(p)+\mu_3(p))
	\]
	where $$\mu_1(p) = \#\{x\in\mathbb{F}_{p}: 5+10x+15x^2 +18x^3=0\},$$
	$$\mu_2(p)=\#\{x\in\mathbb{F}_{p}:10300+ 11025x^2+22680x^4 + 11664x^6=0\}$$ 
	and
	\begin{equation*}
	\begin{split}\mu_3(p)=(\#\{x\in\mathbb{F}_{p}:18/5x^3 + 3x^2 + 2x + 1=0\})^2+\\
	((1+\left(\frac{21}{p}\right))^2(\#\{x\in\mathbb{F}_{p}:18/5x^3 + 3x^2 + 2x + 1=0\}))^2\\
	-2((1+\left(\frac{21}{p}\right))(\#\{x\in\mathbb{F}_{p}:18/5x^3 + 3x^2 + 2x + 1=0\}))^2
		\end{split}
    \end{equation*}
	for all primes except $p\in \{2, 3, 5, 7,97, 103\}$.
	
	Notice, that in this example the leading term is $2p^2$ which is not in agreement with the theorem of Michel. This is due to the fact that the family $\mathcal{E}_{U}$ has non-minimal singular fibers.
	
\end{example}

\textbf{Case 8.}
$\mu_{2,3}=0$, $\mu=0$ and $(a_0,\ldots, b_3)\in\rho(\mathcal{S})$. Suppose that $P$ is not proportional to $Q$. Assume that $K$ is such that $\charac K\neq 2,3$.
In this case $\tilde{\Delta}$ is $K$-isomorphic to the reducible conic $X^2+XY+Y^2=0$ and $\tilde{C}$ is $K$-isomorphic to the union of $4$ schemes $X^2+XY+Y^2=0, \quad y^2=(a_0 + a_3 X^3)^2$ for certain $a_0,a_3\in K$. Let $K=\mathbb{Q}$ and $a_0,a_3\in\mathbb{Z}$. Let $p$ be a prime such that $\mu_{0,3}\not\equiv 0(\textrm{mod }p)$, $a_3\not\equiv 0(\textrm{mod }p)$. Note that these conditions are true for every sufficiently large prime. Then
\[\tilde{M}_{2,p} = p^2\left(2+\left(\frac{-3}{p}\right)\right)-1-\left(\frac{-3}{p}\right).\]
The family $\mathcal{E}_{U}$ has $j$-invariant $0$ and no non-minimal fiber, hence the Bias conjecture holds with bias equal to $-1$.

For a particular choice of polynomials $P, Q$ the $j$-invariant of the corresponding family $\mathcal{E}_{U}$ can be constant in $K$. The next proposition proves that the possibilities are very limited.
\begin{proposition}
	Suppose that the $j$-invariant of the elliptic curve $E_{P,Q}:P(x)U+Q(x)=y^2$ is constant, i.e., $j\in K$ for the polynomials $P,Q\in K[x]$. Then $j\in\{0,12^3\}$ or otherwise $P(x)=\lambda Q(x)$ for some non-zero $\lambda$. In each case, either $P,Q$ have at least one common root or we are in one of the cases $4$--$8$ of Table \ref{tab:non_generic_cases}.
\end{proposition}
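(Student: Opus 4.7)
My plan is to compute the $j$-invariant of $\mathcal{F}_k$ as an explicit rational function of $k$ and to read off the algebraic constraints imposed by its constancy. Writing $P(x)k+Q(x)=Ax^3+Bx^2+Cx+D$ with $A=a_3k+b_3$, $B=a_2k+b_2$, $C=a_1k+b_1$, $D=a_0k+b_0$, the Tschirnhaus substitution $x\mapsto x-B/(3A)$ followed by the rescaling $(x,y)\mapsto(X/A,Y/A^2)$ places the generic fiber in the short Weierstrass form $Y^2=X^3+(Ap)X+A^2q$, with $p=C-B^2/(3A)$ and $q=D-BC/(3A)+2B^3/(27A^2)$. Setting $\tilde{p}:=3AC-B^2$ and $\tilde{q}:=27A^2D-9ABC+2B^3$, which are polynomials in $K[k]$ of degrees at most $2$ and $3$, one obtains
\[j(k)=1728\cdot\frac{4\tilde{p}^3}{4\tilde{p}^3+\tilde{q}^2}.\]
A direct calculation gives $\tilde{q}(k)=27A(k)^2\cdot F(\xi(k),k)$, where $F(x,k)=P(x)k+Q(x)$ and $\xi(k)=-B(k)/(3A(k))$ is the centroid of the cubic in $x$; in particular $\tilde{q}\equiv 0$ iff $\xi(k)$ is a root of $F(\cdot,k)$ for every $k$. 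Constancy of $j$ is equivalent to the proportionality $\tilde{q}^2=c\tilde{p}^3$ in $K[k]$ for some $c\in K\cup\{\infty\}$.

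\emph{Case $j=0$} ($\tilde{p}\equiv 0$): extracting coefficients yields $a_2^2=3a_1a_3$, $b_2^2=3b_1b_3$, and $3(a_1b_3+a_3b_1)=2a_2b_2$; eliminating $a_1,b_1$ from the last identity produces $(a_2b_3-a_3b_2)^2=0$, hence $\mu_{2,3}=0$, and then substitution gives $\mu_{1,3}=(a_2b_3-a_3b_2)(a_2b_3+a_3b_2)/(3a_3b_3)=0$ and $\mu_{1,2}=a_2b_2(a_2b_3-a_3b_2)/(3a_3b_3)=0$. A further short calculation verifies $\mu_{0,2}^2=3\mu_{0,1}\mu_{0,3}$, so all three conditions $\mathcal{C}_1,\mathcal{C}_2,\mathcal{C}_3$ of Section~\ref{sec:non_typical} hold and $(\tilde{\Delta},\tilde{C})$ is in Case~8 of Table~\ref{tab:non_generic_cases}. \emph{Case $j=1728$} ($\tilde{q}\equiv 0$, $\tilde{p}\not\equiv 0$): if $\mu_{2,3}\neq 0$, the Möbius expression $\xi(k)$ is non-constant, and inverting it converts $F(\xi(k),k)\equiv 0$ into the polynomial identity $P(x)(3b_3x+b_2)=Q(x)(3a_3x+a_2)$ in $K[x]$; extracting the $x^3$-coefficient gives $-2\mu_{2,3}=0$, a contradiction. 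Hence $\mu_{2,3}=0$ and $\xi_0:=-b_2/(3b_3)=-a_2/(3a_3)$ is a common root of $P$ and $Q$.

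\emph{Case $j\notin\{0,1728\}$}: both $\tilde{p},\tilde{q}$ are nonzero with $\tilde{q}^2=c\tilde{p}^3$ for some $c\in K^\times$. Factoring over $\overline{K}$ subject to $3\deg\tilde{p}=2\deg\tilde{q}$, $\deg\tilde{p}\le 2$, $\deg\tilde{q}\le 3$ leaves only two possibilities: $\tilde{p}$ and $\tilde{q}$ are both nonzero constants, or $\tilde{p}=\alpha(k-r)^2$ and $\tilde{q}=\beta(k-r)^3$ for some $r\in K$. The constant subcase, by direct expansion of the vanishing leading coefficients, forces $P=a_3(x+u)^3$ and $(x+u)^2\mid Q$ with the linear cofactor of $Q$ not proportional to $(x+u)$; hence $(x+u)^2\mid\gcd(P,Q)$ and the family is identically singular, excluded by the elliptic hypothesis. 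In the non-constant subcase, after an affine change of the parameter $k$ (which does not affect $P$ and only replaces $Q$ by $Q+rP$), we may assume $r=0$; then $\tilde{p}(0)=\tilde{q}(0)=0$ forces $Q(x)=b_3(x+v)^3$ with $v=b_2/(3b_3)$, and the identities $[k^1]\tilde{p}=0$, $[k^2]\tilde{q}=0$ give $P(x)=(x+v)^2[a_3x+(a_2-2a_3v)]$. Either the linear factor equals $a_3(x+v)$ (i.e.\ $a_2=3a_3v$), in which case $P$ is proportional to $(x+v)^3$ and $P=\lambda Q$ lifts back to the original coordinates for a suitable $\lambda\in K^\times$; or the linear factor is not a scalar multiple of $(x+v)$, so $(x+v)^2\mid\gcd(P,Q)$ and the family is again identically singular, excluded.

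The main obstacle is Case~(c): one must carefully track the vanishings of the coefficients of $\tilde{p}$ and $\tilde{q}$, match them to the factorization $\tilde{q}^2=c\tilde{p}^3$, and identify that the common root of $\tilde{p}$ and $\tilde{q}$ lifts to a factor $(x+v)^2$ of $\gcd(P,Q)$, forcing either $P=\lambda Q$ or an entirely singular family. Boundary cases ($a_3=0$ or $b_3=0$) reduce to the main analysis by the symmetry between $P$ and $Q$. The final assertion of the proposition---that in each outcome either $P,Q$ share a root or $(\tilde{\Delta},\tilde{C})$ lies in Cases~4--8---is then immediate: the $j=0$ subcase gives Case~8, while the $j=1728$ and $P=\lambda Q$ subcases each produce explicit common roots.
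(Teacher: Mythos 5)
Your approach is genuinely different from the paper's: the paper normalizes $P=x(x-1)(x-a)$, imposes $j-c=0$ and runs a Gr\"{o}bner basis computation, then reads off the components; you instead compute $j(k)=1728\cdot 4\tilde p^3/(4\tilde p^3+\tilde q^2)$ by hand and analyse the polynomial identity $\tilde q^2=c\tilde p^3$ in $K[k]$. The $j=0$ and $j=1728$ branches are sound (one minor point: once $\mu_{2,3}=\mu_{1,3}=\mu_{1,2}=0$, the $\mathcal{C}_3$ polynomial vanishes identically, so the extra verification $\mu_{0,2}^2=3\mu_{0,1}\mu_{0,3}$ is unnecessary).

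However, the non-constant subcase of $j\notin\{0,1728\}$ has a genuine gap. After the shift $k\mapsto k+r$, you claim that $\tilde p(0)=\tilde q(0)=0$ \emph{forces} $Q=b_3(x+v)^3$ with $v=b_2/(3b_3)$; this silently assumes $b_3\neq 0$. But the case that actually produces a \emph{non-singular} family with $P=\lambda Q$ (a pencil of quadratic twists of a separable cubic) is exactly the one you miss: after the shift by $r=-1/\lambda$ the new $Q$ vanishes identically, so the new $b_3=b_2=0$. Indeed, if the shifted $Q$ is a nonzero multiple of $(x+v)^3$ and the shifted $P=a_3(x+v)^3$ (your ``$a_2=3a_3v$'' branch), then a short computation gives $\tilde p=(3\cdot 3v^2-9v^2)(a_3k+b_3)^2\equiv 0$, contradicting the standing assumption $\tilde p\not\equiv 0$; so that branch is vacuous, and in any case lifting back would give both $P$ and $Q$ proportional to a perfect cube, which is an identically singular pencil rather than the genuine twist family the proposition has in mind. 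The remark that ``boundary cases $a_3=0$ or $b_3=0$ reduce by symmetry'' does not repair this, since the vanishing happens to the \emph{shifted} $b_3$ at the forced value $r=-1/\lambda$, not to the original coefficients. To close the gap you must treat the solution $b_3=b_2=0$ of $\tilde p(0)=0$ separately; there $[k^1]\tilde p=3a_3b_1=0$ and $[k^2]\tilde q=27a_3^2 b_0=0$ force the shifted $Q\equiv 0$, from which $P=\lambda Q$ with $\lambda=-1/r$, and $\tilde q^2=c\tilde p^3$ holds automatically with $4\tilde p^3+\tilde q^2$ proportional to $\mathrm{disc}(P)\neq 0$.
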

\begin{proof}
	Since a $j$-invariant does not depend on the choice of a model of the elliptic curve $E_{P,Q}$ we can assume without loss of generality that $P(x)=x(x-1)(x-a)$, $Q(x)=b_0+b_1 x+b_2 x^2+b_3x^3$. We match the $j$-invariant of such a family with a variable $c$ and solve the Gr\"{o}bner basis problem in the variables $b_0,b_1,b_2,b_3,a,c$ determined by the vanishing of the expression $j-c$. The geometrically reduced subscheme corresponding to this setup has either $c=0$ or $c=12^3$ or $P(x)=Q(x)$.  The final claim follows from a straightforward case analysis of the irreducible components of the parametrizing schemes.
\end{proof}

\begin{remark}
    Some other non-typical cases of the Bias conjecture were proved in \cite{Novak_rector_award}.
\end{remark}

\bibliographystyle{alpha}
\bibliography{bibliography}
\end{document}